\numberwithin{equation}{section}
\renewcommand\d{\partial}
\renewcommand\a{\alpha}
\renewcommand\b{\beta}
\newcommand\R{\mathbb R}\newcommand\N{\mathbb N}
\def\g{\gamma}
\def\de{\mathfrak{z}}
\def\O{\Omega}
\def\th{\theta}
\def\l{\lambda}
\def\epsilon{\varepsilon}
\def\e{\varepsilon}
\newcommand\br{\begin{rem}}
\newcommand\er{\end{rem}}
\newcommand\bp{\begin{pmatrix}}
\newcommand\ep{\end{pmatrix}}
\newcommand\be{\begin{equation}}
\newcommand\ee{\end{equation}}
\newcommand\ba{\begin{equation}\begin{aligned}}
\newcommand\ea{\end{aligned}\end{equation}}
\newcommand\nn{\nonumber}
\newcommand{\CalE}{\mathcal{E}}
\newcommand{\RR}{{\mathcal R}}
\newcommand{\TT}{{\mathbb T}}
\newcommand{\II}{{\mathbb I}}
\newcommand{\SSS}{{\mathbb S}}
\newcommand{\YY}{{\mathbb Y}}
\newcommand{\tr}{{\rm tr }}
\newcommand{\vv}{{\mathbf v}}
\newcommand{\ff}{{\mathbf f}}
\newcommand{\vvarphi}{{\boldsymbol \varphi}}
\newcommand{\Ov}[1]{\overline{#1}}
\newcommand{\DC}{C^\infty_c}
\newcommand{\vr}{\varrho}
\newcommand{\tvr}{\tilde \vr}
\newcommand{\tvu}{\tilde \vu}
\newcommand{\vu}{\vc{u}}
\newcommand{\vc}[1]{{\bf #1}}
\newcommand{\Div}{{\rm div}}
\newcommand{\Grad}{\nabla_x}
\newcommand{\dx}{{\rm d} {x}}
\newcommand{\dt}{{\rm d} t }
\newcommand{\intO}[1]{\int_{\O} #1 \, \dx}
\newcommand{\vw}{\vc{w}}
\newcommand{\LLL}{\mathcal{L}^{-1}}
\newcommand{\teta}{\tilde \eta}
\newcommand{\tTT}{\tilde \TT}
\newtheorem{definition}{Definition}[section]
\newtheorem{theorem}[definition]{Theorem}
\newtheorem{proposition}[definition]{Proposition}
\newtheorem{lemma}[definition]{Lemma}
\newtheorem{remark}[definition]{Remark}
\def\ocirc#1{\ifmmode\setbox0=\hbox{$#1$}\dimen0=\ht0
    \advance\dimen0 by1pt\rlap{\hbox to\wd0{\hss\raise\dimen0
   \hbox{\hskip.2em$\scriptscriptstyle\circ$}\hss}}#1\else
   {\accent"17 #1}\fi}
\begin{document}

\title{Relative entropy, weak-strong uniqueness and conditional regularity for a compressible Oldroyd--B model}

\author{Yong Lu \footnote{Chern Institute of Mathematics  $\&$ LPMC, Nankai University, Tianjin 300071, China. Email: {\tt lvyong@amss.ac.cn}.} \and  Zhifei Zhang\footnote{School of Mathematical Science, Peking University, Beijing 100871, China. Email: {\tt zfzhang@math.pku.edu.cn}.}  \thanks{Z. Zhang is partially supported by NSF of China under
Grant No.11425103.}}

\date{}

\maketitle

\begin{abstract}

We consider the compressible Oldroyd--B model derived in \cite{Barrett-Lu-Suli}, where the existence of global-in-time finite energy weak solutions was shown in two dimensional setting. In this paper, we first state a local well-posedness result for this compressible Oldroyd--B model. In two dimensional setting, we give a (refined) blow-up criterion involving only the upper bound of the fluid density.  We then show that, if the initial fluid density and polymer number density admit a positive lower bound, the weak solution coincides with the strong one as long as the latter exists. Moreover, if the fluid density of a weak solution issued from regular initial data admits a finite upper bound, this weak solution is indeed a strong one; this can be seen as a corollary of the refined blow-up criterion and the weak-strong uniqueness.

\end{abstract}

{\bf Keywords:} compressible Oldroyd--B model; relative entropy inequality; weak-strong uniqueness; conditional regularity.




\section{Introduction}

The incompressible Oldroyd--B model is a macroscopic model involving only macroscopic quantities, such as the velocity, the pressure and the stress. It is known that from the incompressible Navier--Stokes--Fokker--Planck system which is a micro-macro model describing incompressible dilute polymeric fluids, one can derive, at least formally, the Oldroyd--B model, see \cite{Bris-Lelievre}.

Similar derivation can be done in the compressible setting. Indeed,  in \cite{Barrett-Lu-Suli}, a compressible Oldroyd--B model was derived as a macroscopic closure of the compressible Navier--Stokes--Fokker--Planck equations studied in a series of papers by Barrett and S\"uli \cite{Barrett-Suli, Barrett-Suli1, Barrett-Suli2, Barrett-Suli4, BS2016}. 

We now represent the compressible Oldroy--B model derived in \cite{Barrett-Lu-Suli}. Let $\Omega \subset \mathbb{R}^d$ be a bounded open domain with $C^{2,\beta}$ boundary (briefly, a $C^{2,\beta}$ domain), with $\beta \in (0,1)$, and $d \in \{2,3\}$.  We consider the following compressible Oldroyd--B model posed in the time-space cylinder $(0,T)\times \O$:
\begin{alignat}{2}
\label{01a}
\d_t \vr + \Div_x (\vr \vu) &= 0,
\\
\label{02a}
\d_t (\vr\vu)+ \Div_x (\vr \vu \otimes \vu) +\nabla_x p(\vr)  - ( \mu \Delta_x \vu + \nu\nabla_x \Div_x \vu) &=\Div_x \big(\TT - (kL\eta + \de\, \eta^2)\,\II\, \big)  +  \vr\, \ff,
\\
\label{03a}
\d_t \eta + \Div_x (\eta \vu) &= \e \Delta_x \eta,
\\
\label{04a}
\d_t \TT + {\rm Div}_x (\vu\,\TT) - \left(\nabla_x \vu \,\TT + \TT\, \nabla_x^{\rm T} \vu \right) &= \e \Delta_x \TT + \frac{k}{2\lambda}\eta  \,\II - \frac{1}{2\lambda} \TT,
\end{alignat}
where the pressure $p$ and the density $\vr$ of the solvent are supposed to be related by the typical power law relation:
\be\label{pressure}
 p(\vr)=a \vr^\gamma, \quad a>0, \ \gamma > 1.
\ee
The term $\mu \Delta_x \vu + \nu \nabla_x \Div_x \vu$ corresponds to $\Div_x \SSS(\nabla_x \vu)$ where $\SSS(\nabla_x \vu)$ is the {\em Newtonian stress tensor} defined by
\be\label{Newtonian-tensor}
\SSS(\nabla_x \vu) = \mu^S \left( \frac{\nabla_x \vu + \nabla^{\rm T}_x \vu}{2} - \frac{1}{d} (\Div_x \vu) \II \right) + \mu^B (\Div_x \vu) \II,
\ee
where $\mu^S>0$ and $\mu^B\geq 0$ are shear and bulk viscosity coefficients, respectively. Indeed, direct calculation gives
\be\label{Newtonian-tensor-Lame}
\Div_x\SSS(\nabla_x \vu) = \frac{\mu^S}{2} \Delta_x \vu + \left(  \mu^B + \frac{\mu^S}{2} - \frac{\mu^S}{d} \right)\nabla_x \Div_x \vu = \mu \Delta_x \vu + \nu \nabla_x \Div_x \vu\nn
\ee
with
\be\label{mu-nu}
\mu:=\frac{\mu^S}{2}>0,\quad \nu:=\mu^B + \frac{\mu^S}{2} - \frac{\mu^S}{d} \geq 0.\nn
\ee

The velocity gradient matrix is defined as
\be\label{def-nabla-u}
( \nabla_x \vu )_{1\leq i,j\leq d}= (\d_{x_j} \vu_i)_{1\leq i,j\leq d}.\nn
\ee

The symmetric matrix function $\TT = (\TT_{i,j})$, $1\leq i,j \leq d$, defined on $(0,T)\times \O$, is the extra stress tensor and the notation ${\rm Div}_x(\vu\,\TT)$ is defined by
\be\label{def-Div-tau}
\left({\rm Div}_x(\vu\,\TT)\right)_{i,j} = \Div_{ x}(\vu\,\TT_{i, j}), \quad 1\leq i,j \leq d.\nn
\ee
The meaning of the various quantities and parameters appearing in \eqref{01a}--\eqref{04a} were introduced in the derivation of the model later in \cite{Barrett-Lu-Suli}.  In particular, the parameters $\e$, $k$, $\l$ are all positive numbers, whereas $\de \geq 0$ and $L\geq 0$ with $\de + L > 0$.

The polymer number density $\eta$ is a nonnegative scalar function defined as the integral of the probability density function $\psi$, which is governed by the Fokker--Planck equation, in the conformation vector which is a microscopic variable in the modelling of dilute polymer chains. The term $q(\eta):= kL \eta + \de \eta^2$ in the momentum equation \eqref{02a} can be seen as the \emph{polymer pressure}, compared to the fluid pressure $p(\vr)$.

The equations \eqref{01a}--\eqref{04a} are supplemented by initial conditions for $\vr$, $\vu$, $\eta$ and $\TT$, and the following boundary conditions are imposed:
\begin{alignat}{2}
\label{05a}
\vu&=\mathbf{0}   &&\quad \mbox{on}\ (0,T)\times \partial\Omega,
\\
\label{06a}
\d_{\bf n} \eta  &=0 &&\quad \mbox{on}\  (0,T)\times \d\O,
\\
\label{07a}
\d_{\bf n} \TT &=0 &&\quad \mbox{on}\  (0,T)\times \d\O.
\end{alignat}
Here $\d_{\bf n}:= {\bf n}\cdot \nabla_x$, where ${\bf n}$ is the outer unit normal vector on the boundary $\d\O$.
The external force $\ff$ is assumed to be in $L^\infty((0,T)\times \O;\R^d)$.

There are stress diffusion terms $\e \Delta_x \eta$ and $\e \Delta_x \TT$ in our model. Such spatial stress diffusions are indeed allowed in some modeling of complex fluids, such as in the creeping flow regime as pointed out in \cite{CK}. Also in the modeling of the compressible Navier--Stokes--Fokker--Planck system arising in the kinetic theory of dilute polymeric fluids, where polymer chains immersed in a barotropic, compressible, isothermal, viscous Newtonian solvent, Barrett and S\"uli \cite{Barrett-Suli} observed the presence of the centre-of-mass diffusion term $\e \Delta_x \psi$, where $\psi$ is the probability density function depending on both microscopic and macroscopic variables; as a result, its macroscopic closure contains such diffusion terms.

The incompressible Oldroyd--B model  attracts continuous attentions of mathematicians. The local-in-time well-posedness, as well as the global-in-time well-posedness with small data, in various spaces is known due to the contribution of Renardy \cite{Renardy90}, Guillop\'e and Saut \cite{G-S90-1, G-S90-2} and  Fern\'andez-Cara, Guill\'en and Ortega \cite{F-G-O02}. Concerning the global-in-time existence of solutions with large data, under the corotational derivative setting where for the system in the extra stress tensor, a term related to the velocity gradient is replaced by its anti-symmetric part, Lions and Masmoudi \cite{LM00} showed the global-in-time existence of weak solutions. In the presence of the stress diffusion for which there arises a regularization Laplacian term in the extra stess tensor, Barrett and Boyaval \cite{Barrett-Boyaval} showed the global-in-time existence of weak solutions in two dimensional setting. Also in the presence of the stress diffusion and in two dimensional setting, Constantin and Kliegl \cite{CK} proved the global existence of strong solutions, which can be seen as a continuation of the global well-posedness theory for two dimensional incompressible Navier--Stokes equations.

While, many fundamental problems for the incompressible Oldroyd--B model are still open, such as the global-in-time existence of large data solutions, even the weak ones, both in two dimensional and three dimensional setting without the stress diffusion. Even with the stress diffusion, the global-in-time existence of large data solutions, strong or weak, is still open  in three dimensional setting. This is somehow within the expectation, since the well-posedness of the incompressible Navier--Stokes equations is a well-known open problem.

\smallskip

Even less are known concerning compressible Oldroyd--B models. Let us mention some mathematical  results for compressible viscoelastic models, which  has been the subject of active research in recent years. The existence and uniqueness of local strong solutions and the existence of global solutions near equilibrium for macroscopic models of three-dimensional compressible viscoelastic fluids was considered in \cite{Qian-Zhang, Hu-Wang3,Hu-Wu, Lei}. In particular, Fang and Zi \cite{Fang-Zi} proved the existence of a unique local-in-time strong solution to a compressible Oldroyd--B model and established a blow-up criterion for strong solutions.  In \cite{Barrett-Lu-Suli}, the existence of global-in-time weak solutions in two dimensional setting for the compressible Oldroyd--B model \eqref{01a}--\eqref{07a} was shown.


\section{Main results}

In this section, we state our main results. We first recall the result shown in \cite{Barrett-Lu-Suli} concerning the global-in-time existence of weak solutions. We state the theorem concerning the local well--posedness of strong solutions and a blow-up criterion.  
In two dimensional setting, we give a refined blow-up criterion result where only the $L^\infty$ bound of the fluid density is needed. We then show a weak-strong uniqueness result by using the relative entropy method. As a corollary, this offers us a conditional regularity theorem.

\subsection{Global-in-time finite energy weak solutions}

We give our basic hypotheses on the initial data:
\ba\label{ini-data-f}
&\vr(0,\cdot) = \vr_0(\cdot) \ \mbox{with}\ \vr_0 \geq 0 \ {\rm a.e.} \ \mbox{in} \ \O, \quad \vr_0 \in L^\gamma(\O),\\
&\vu(0,\cdot) = \vu_0(\cdot) \in L^r(\O;\R^d) \ \mbox{for some $r\geq 2\g'$}\ \mbox{such that}\ \vr_0|\vu_0|^2 \in L^1(\O),\\
&\eta(0,\cdot)=\eta_0(\cdot) \ \mbox{with}\ \eta_0 \geq 0 \ {\rm a.e.} \ \mbox{in} \ \O, \quad \left\{\begin{aligned}
&\eta_0 \in L^2 (\O), \quad \mbox{if}\ \de >0,\\
&\eta_0 \log \eta_0 \in L^1 (\O), \quad \mbox{if} \ \de=0,
\end{aligned}\right. \\ 
&\TT(0,\cdot) = \TT_0(\cdot) \ \mbox{with}\ \TT_0=\TT_0^{\rm T} \geq 0 \ \mbox{a.e. in}  \ \O ,\quad  \TT_0 \in L^2(\O;\R^{d \times d}).
\ea
A related weak solution is defined as follows:
\begin{definition}\label{def-weaksl-f} Let $T>0$ and suppose that $\O\subset \R^d$ is a bounded $C^{2,\beta}$ domain with  $0<\beta<1$. We say that $(\vr,\vu,\eta,\TT)$ is a finite-energy
weak solution in $(0,T)\times \O$ to the system of equations \eqref{01a}--\eqref{07a},  supplemented by the initial data \eqref{ini-data-f}, if:
\begin{itemize}
\item $\vr \geq 0 \ {\rm a.e.\  in} \ (0,T) \times \Omega$, $\vr \in  C_w ([0,T];  L^\gamma(\Omega))$,  $\vu\in L^{2}(0,T;W_0^{1,2}(\Omega; \R^d))$,
\vspace{-2mm}
\begin{align*}
&\vr \vu \in C_w([0,T]; L^{\frac{2 \gamma}{ \gamma + 1}}(\Omega; \R^d)),\quad \vr |\vu|^2 \in L^\infty(0,T; L^{1}(\Omega)),\\
&\eta  \geq 0  \ {\rm a.e.\  in} \ (0,T) \times \Omega,\\
& \left\{\begin{aligned}
&\eta \in C_w ([0,T];  L^2(\Omega)) \cap L^2 (0,T;  W^{1,2}(\Omega)), \quad \mbox{if}\ \de >0,\\
&\eta \log \eta \in L^\infty(0,T;L^1(\O)), \ \eta^{\frac{1}{2}} \in L^2(0,T;W^{1,2}(\O)), \ \eta \in C_w([0,T];L^1(\O)), \quad \mbox{if} \ \de=0,
\end{aligned}\right. \\
&{ \TT = \TT^{\rm T} \geq 0  \ {\rm a.e.\  in} \ (0,T) \times \Omega,\quad \TT \in C_w ([0,T];  L^2(\Omega;\R^{d \times d})) \cap L^2 (0,T;  W^{1,2}(\Omega;\R^{d \times d}))}.
\end{align*}
\item For any $t \in (0,T)$ and any test function $\phi \in C^\infty([0,T] \times \Ov{\Omega})$, one has
\be\label{weak-form1-f}
\int_0^t\intO{\big[ \vr \partial_t \phi + \vr \vu \cdot \Grad \phi \big]} \,\dt' =
\intO{\vr(t, \cdot) \phi (t, \cdot) } - \intO{ \vr_{0} \phi (0, \cdot) },
\ee
\be\label{weak-form2-f}
\int_0^t \intO{ \big[ \eta \partial_t \phi + \eta \vu \cdot \Grad \phi - \e \nabla_x\eta \cdot \nabla_x \phi \big]} \, \dt' =  \intO{ \eta(t, \cdot) \phi (t, \cdot) } - \intO{ \eta_{0} \phi (0, \cdot) }.
\ee
\item For any $t \in (0,T)$ and any test function $\vvarphi \in C^\infty([0,T]; \DC({\Omega};\R^d))$, one has
\ba\label{weak-form3-f}
&\hspace{-0.7cm}\int_0^t \intO{ \big[ \vr \vu \cdot \partial_t \vvarphi + (\vr \vu \otimes \vu) : \Grad \vvarphi  + p(\vr)\, \Div_x \vvarphi + \big(kL\eta+\de\,\eta^2\big)\, \Div_x \vvarphi  - \SSS(\nabla_x \vu) : \Grad \vvarphi \big] } \, \dt'\\
&\hspace{-0.7cm}= \int_0^t \intO{ \big[\TT : \nabla_x\vvarphi - \vr\, \ff \cdot \vvarphi \big]} \, \dt' + \intO{ \vr \vu (t, \cdot) \cdot \vvarphi (t, \cdot) } - \intO{ \vr_{0} \vu_{0} \cdot \vvarphi(0, \cdot) }.
\ea
\item For any $t \in (0,T)$ and any test function $\YY \in C^\infty([0,T] \times \Ov{\Omega};\R^{d \times d})$, one has
\ba\label{weak-form4-f}
&\int_0^t \intO{ \left[ \TT : \partial_t \YY +  (\vu \,\TT) :: \nabla_{x} \YY
+ \left(\nabla_x \vu \,\TT + \TT\, \nabla_x^{\rm T} \vu \right):\YY
- \e \nabla_x  \TT :: \nabla_x \YY  \right]}\,\dt'\\
&=\int_0^t \intO{ \left[ - \frac{k}{2\lambda}\eta  \,\tr\left(\YY\right) + \frac{1}{2\lambda} \TT:\YY \right]} \, \dt' + \intO{ \TT (t, \cdot) : \YY (t, \cdot) }
- \intO{ \TT_{0} : \YY(0, \cdot) }.
\ea
\item The continuity equation holds in the sense of renormalized solutions: for any $b\in C^1_b[0,\infty),$
\be\label{weak-renormal-f}
\d_t b(\vr) +\Div_x (b(\vr)\vu) + (b'(\vr)\vr - b(\vr))\,\Div_x \vu =0 \quad \mbox{in} \ \mathcal{D}' ((0,T)\times \O).
\ee
%

%
%
\item  For a.e. $t \in (0,T)$, the following \emph{energy inequality} holds:
\ba\label{energy1-f}
&\int_\O \left[ \frac{1}{2} \vr |\vu|^2 + \frac{a}{\g-1} \vr^\g + \left(k L  (\eta \log \eta + 1) + \de \,\eta^2\right)+ \frac{1}{2}\tr \left(\TT \right)\right](t,\cdot)\dx \\
&\quad+ 2\e\int_0^t \int_\O 2k L   |\nabla_x \eta^{\frac12}|^2 +  \de\,  |\nabla_x \eta|^2 \,\dx\,\dt'  + \frac{1}{4\l} \int_0^t  \int_\O \tr\left(\TT\right) \dx \,\dt'  \\
&\quad +  \int_0^t  \int_\O \mu \left| \nabla_x \vu \right|^2 + \nu |\Div_x \vu|^2\,\dx \,\dt'  \\
& \leq  \int_\O \left[ \frac{1}{2} \vr_0 |\vu_0|^2 + \frac{a}{\g-1} \vr_0^\g + \left(k L  (\eta_0 \log \eta_0 + 1) + \de \,\eta_0^2\right)+ \frac{1}{2}\tr \left(\TT_0\right) \right]\dx \\
&\quad + \int_0^t \int_\O \vr\,\ff \cdot  \vu \,\dx\,\dt' + \frac{k \,d}{4\lambda} \int_0^t \int_\O \eta  \,\dx \,\dt'.
\ea
\end{itemize}
\end{definition}

\smallskip

We recall the associated result concerning the existence of large data global-in-time finite-energy weak solutions, which can be obtained by summarizing Theorem 11.2 and Theorem 12.1 in \cite{Barrett-Lu-Suli}.

\smallskip

\begin{theorem}\label{thm-weak}
Let $\g>1$ and $\Omega \subset \mathbb{R}^2$ be a bounded $C^{2,\beta}$ domain with $\beta \in (0,1)$. Assume the parameters $\e$, $k$, $\l$ are all positive numbers and  $\de \geq 0$, $L\geq 0$ with $\de + L >0$. Then for any $T>0$, there exists a finite-energy weak solution $(\vr,\vu,\eta,\TT)$ in the sense of Definition \ref{def-weaksl-f} with initial data \eqref{ini-data-f}. Moreover, the extra stress tensor $\TT $ satisfies the bound
\ba\label{est-tau-higher}
 \int_\O  | \TT(t, \cdot)|^2 \,\dx + \e  \int_0^t \int_\O |\nabla_x \TT|^2  \,\dx\,\dt' + \frac{1}{4\l}\int_0^t \int_\O | \TT|^2 \dx\, \dt'  \leq C(t, E_0),
\ea
for a.e. $t \in (0,T)$, where $E_0$ is given by
$$
E_0:=\int_\O \left[ \frac{1}{2} \vr_0 |\vu_0|^2 + \frac{a}{\g-1} \vr_0^\g + \left(k L  (\eta_0 \log \eta_0 + 1) + \de \,\eta_0^2\right)+ \frac{1}{2}\tr \left(\TT_0\right) \right]\dx.
$$
\end{theorem}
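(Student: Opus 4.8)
\medskip\noindent\textbf{Proof proposal.}
The plan is to realise the weak solution $(\vr,\vu,\eta,\TT)$ as the limit of a multi-level approximation scheme of the type underlying the Lions--Feireisl theory for the compressible Navier--Stokes equations, coupled to the parabolic problems for $\eta$ and $\TT$, as carried out in \cite{Barrett-Lu-Suli}. First I would regularise \eqref{01a}--\eqref{04a} by adding an artificial pressure $\delta\vr^\beta$ with $\beta$ large to the momentum balance \eqref{02a}, a vanishing-viscosity term $\omega\Delta_x\vr$ to the continuity equation \eqref{01a}, and projecting the velocity equation onto a Faedo--Galerkin basis of $W^{1,2}_0(\Omega;\R^d)$. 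For a velocity frozen in the finite-dimensional space the continuity equation is a linear parabolic problem for $\vr$, while \eqref{03a} and \eqref{04a} are linear parabolic problems for $\eta$ and $\TT$; each is solved by standard theory, and a Schauder fixed-point argument produces a local-in-time solution of the coupled approximate system, extended to $[0,T]$ by the a priori bounds below.

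\medskip
Before passing to the limit one must verify the two structural sign properties built into Definition \ref{def-weaksl-f}. Nonnegativity of $\eta$ follows from the maximum principle for the transport--diffusion equation \eqref{03a}, since $\eta_0\ge 0$ and there is no zeroth-order term of the wrong sign. Positive semidefiniteness of $\TT$ is the delicate point: writing \eqref{04a} as
\[
\d_t\TT+(\vu\cdot\nabla_x)\TT-\e\Delta_x\TT=(\nabla_x\vu)\,\TT+\TT\,(\nabla_x^{\rm T}\vu)+\tfrac{k}{2\lambda}\eta\,\II-\tfrac{1}{2\lambda}\TT,
\]
one checks that diffusion, transport, the conjugation terms $(\nabla_x\vu)\TT+\TT(\nabla_x^{\rm T}\vu)$, the source $\tfrac{k}{2\lambda}\eta\,\II\ge 0$ and the relaxation $-\tfrac{1}{2\lambda}\TT$ all map the cone of symmetric positive semidefinite matrices into itself, so a splitting/approximation argument yields $\TT=\TT^{\rm T}\ge 0$ whenever $\TT_0=\TT_0^{\rm T}\ge 0$.

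\medskip
Next come the uniform bounds. Testing the Galerkin velocity equation with $\vu$, the renormalised continuity equation with $\tfrac{a\g}{\g-1}\vr^{\g-1}$ (plus the artificial-pressure analogue), equation \eqref{03a} with $kL(1+\log\eta)+2\de\,\eta$, and taking the trace of \eqref{04a} and integrating in $x$, one finds that the stress forcing $\Div_x\big(\TT-(kL\eta+\de\,\eta^2)\II\big)$ in \eqref{02a} is exactly cancelled by the matching contributions of the $\eta$- and $\tr\TT$-identities; this is the algebraic structure behind the energy inequality \eqref{energy1-f}, up to the lower-order term $\tfrac{k\,d}{4\lambda}\int_\O\eta$ whose time integral is controlled by Gr\"onwall (when $\de=0$ the $L^2$-bound on $\eta$ is replaced by the entropy bound $\eta\log\eta\in L^\infty_tL^1_x$ and the $\eta$-equation is renormalised accordingly). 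In parallel, testing \eqref{04a} with $\TT$ and using, \emph{in two space dimensions}, the Ladyzhenskaya inequality $\|\TT\|_{L^4}^2\le C\|\TT\|_{L^2}\|\TT\|_{W^{1,2}}$ to absorb $\int_\O(\nabla_x\vu\,\TT):\TT$ into the diffusion $\e\|\nabla_x\TT\|_{L^2}^2$, and then Gr\"onwall with $\int_0^T\|\nabla_x\vu\|_{L^2}^2\,\dt<\infty$, yields the higher bound \eqref{est-tau-higher}. This is precisely the step that forces $d=2$: in three dimensions the analogous argument would need $\vu\in L^4(0,T;W^{1,2})$, which the energy inequality does not provide.

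\medskip
Finally I would let the parameters tend to their limits in the order $n\to\infty$, then $\omega\to 0$, then $\delta\to 0$. The stress diffusions $\e\Delta_x\eta$ and $\e\Delta_x\TT$ give uniform $L^2(0,T;W^{1,2})$ bounds, so Aubin--Lions yields strong convergence of $\eta$ and $\TT$; this lets every coupling term pass to the limit, the products $(\nabla_x\vu)\TT$, $\eta\vu$, $\eta^2$ and $\tfrac{k}{2\lambda}\eta\,\II$ being weak--strong products. The genuine obstacle is, as always for compressible models, the strong compactness of $\vr$: one must prove the effective-viscous-flux identity, propagate the renormalised continuity equation \eqref{weak-renormal-f}, and control the oscillation defect measure, now with the total pressure $p(\vr)+kL\eta+\de\,\eta^2$ and the forcing $\Div_x\TT$ in place of $p(\vr)$ alone. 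Since the additional terms converge strongly, Feireisl's argument carries over; combined with weak lower semicontinuity for the recovery of \eqref{energy1-f}, this completes the construction, and the full details are given in \cite{Barrett-Lu-Suli}.
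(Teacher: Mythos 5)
Your proposal is correct and follows essentially the same route as the paper, which does not prove this theorem itself but quotes it from Theorems 11.2 and 12.1 of \cite{Barrett-Lu-Suli}: the multi-level Galerkin/artificial-pressure/vanishing-viscosity scheme, the preservation of $\eta\ge 0$ and $\TT=\TT^{\rm T}\ge 0$, the cancellation structure behind \eqref{energy1-f}, and the effective-viscous-flux compactness argument are exactly the ingredients of that construction, while your derivation of \eqref{est-tau-higher} (testing \eqref{04a} with $\TT$, Ladyzhenskaya in 2D, Gr\"onwall) coincides with the computation the paper itself reproduces in \eqref{a-priori-tau-f3}--\eqref{a-priori-4}. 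The only detail worth making explicit is that when $\de=0$ the source term $\tfrac{k}{2\lambda}\int\eta\,\tr\TT$ in that estimate needs $\eta\in L^2((0,T)\times\O)$, which in two dimensions follows from $\nabla_x\eta^{1/2}\in L^2$ and the embedding $W^{1,1}(\O)\hookrightarrow L^2(\O)$, as in \eqref{est-tau-de=0-1} and the lines following it.
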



\subsection{Local well-posedness and blow-up criterion}

We now state the result concerning the local-in-time existence of strong solutions. By strong solution, here we mean a weak solution satisfy the equations \eqref{01a}--\eqref{07a} $a.e$ in the space-time cylinder under consideration.

\begin{theorem}\label{thm-strong} Let $\g>1$ and $\O\subset \R^d$ be a bounded $C^{2,\beta}$ domain with $\beta \in (0,1)$. Assume the parameters $\e$, $k$, $\l$ are all positive numbers, whereas $\de \geq 0$ and $L\geq 0$ with $\de + L > 0$. We assume the external force $\ff\in W^{1,2}((0,\infty)\times \O)$. Additional to the assumption on initial data in \eqref{ini-data-f}, we suppose
\be\label{ini-data-s}
\vr_0\in W^{1,6}(\O),\  \eta_0 \in W^{2,2}_{\bf n}(\O), \  \TT_0 \in W_{\bf n}^{2,2}(\O;\R^{d\times d}), \ \vu_0 \in W_0^{1,2}\cap W^{2,2}(\O;\R^d),
\ee
where the notation $W^{2,2}_{\bf n}(\O):=\{f\in W^{2,2}(\O):\ \d_{\bf n} f =0 \ \mbox{on} \ \d\O\}$. Suppose there holds
  \be\label{constrain-data}
  - ( \mu \Delta_x \vu_0 + \nu\nabla_x \Div_x \vu_0) + \nabla_x p(\vr_0) - \Div_x \TT_0 +\nabla_x (kL\eta_0 + \de\, \eta_0^2) = \sqrt{\vr_0}  g
  \ee
for some $g\in L^2(\O;\R^d)$. Thus, there exists a unique strong solution $(\vr, \vu, \eta, \TT)$ to \eqref{01a}--\eqref{07a} with a maximal existence time $T_*\in (0,\infty]$ such that
\ba\label{est-strong}
&\vr\geq 0, \ \vr \in C([0,T_*), W^{1,6}(\O)),\\
& \vu \in C([0,T_*), W_0^{1,2}\cap W^{2,2}(\O;\R^d)) \cap L^2_{\rm loc}([0,T_*); W^{2,r}(\O;\R^d)),\\
&\eta \geq 0, \ \TT=\TT^{\rm T} \geq 0, \ (\eta, \TT)  \in C([0,T_*), W^{2,2}_{\bf n})\cap L^2_{\rm loc}([0,T_*);W^{3,2})(\O;\R\times \R^{d\times d})) ,
\ea
where $r=6$ when $d=3$ and $r\in (1,\infty)$ is arbitrary when $d=2$.

If $T_* < \infty$, the following quantity blow-up:
\be\label{criterion1}
\limsup_{T\to T_*} \big( \| \vr \|_{L^\infty((0,T)\times \O )}+ \| \eta \|_{L^\infty((0,T)\times \O )} +  \| \TT \|_{L^2(0,T;L^\infty(\O;\R^{d\times d}))}  \big) =\infty.
\ee

\end{theorem}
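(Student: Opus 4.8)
The plan is to construct the local strong solution by a standard fixed-point/iteration scheme combined with energy estimates, and then to upgrade the blow-up criterion by deriving a priori bounds that depend only on the three quantities appearing in \eqref{criterion1}. For the \emph{local existence}, first I would linearize the system: given a velocity field $\bar{\vu}$ in an appropriate function space on a short time interval $(0,\tau)$, solve the transport-type continuity equation \eqref{01a} for $\vr$ (using the method of characteristics or a parabolic regularization, exploiting $\bar{\vu}\in L^1(0,\tau;W^{1,\infty})$ to propagate the $W^{1,6}$ regularity of $\vr_0$), solve the two parabolic problems \eqref{03a} and \eqref{04a} for $\eta$ and $\TT$ (linear advection-diffusion equations with Neumann data, for which maximal $L^2$-regularity gives $C([0,\tau];W^{2,2}_{\bf n})\cap L^2(0,\tau;W^{3,2})$ provided $\bar{\vu}$ is controlled in $L^2(0,\tau;W^{2,r})$), and finally solve the linear parabolic (Lamé) system \eqref{02a} for $\vu$ with the already-constructed $\vr,\eta,\TT$ entering as coefficients and right-hand side. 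The compatibility condition \eqref{constrain-data} is exactly what is needed so that $\partial_t\vu|_{t=0}\in L^2$, which makes the maximal-regularity estimate for $\vu$ closed at the level $C([0,\tau];W^{2,2})\cap L^2(0,\tau;W^{2,r})$. One then shows this map $\bar{\vu}\mapsto\vu$ is a contraction on a small ball in the relevant space for $\tau$ small, giving a unique fixed point; uniqueness of the strong solution on its maximal interval follows from a Gr\"onwall argument on the difference of two solutions. Positivity of $\vr$ follows from the transport structure of \eqref{01a}; positivity of $\eta$ from the maximum principle for \eqref{03a}; positivity (in the sense of symmetric positive semidefinite matrices) of $\TT$ is the most delicate and follows because \eqref{04a} has the structure $\partial_t\TT + \text{(transport)} - (\nabla_x\vu\,\TT+\TT\,\nabla_x^{\rm T}\vu) = \e\Delta_x\TT + \tfrac{k}{2\lambda}\eta\,\II - \tfrac{1}{2\lambda}\TT$, for which a parabolic comparison/propagation argument (as in the Barrett--S\"uli and Barrett--Lu--S\"uli analysis) preserves $\TT\geq 0$.

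For the \emph{blow-up criterion}, the goal is to show that if
$$
M(T):=\| \vr \|_{L^\infty((0,T)\times \O )}+ \| \eta \|_{L^\infty((0,T)\times \O )} +  \| \TT \|_{L^2(0,T;L^\infty(\O;\R^{d\times d}))}
$$
stays finite as $T\to T_*<\infty$, then all the norms in \eqref{est-strong} stay bounded, contradicting maximality. The strategy is a bootstrap. Starting from the basic energy inequality \eqref{energy1-f} and the higher stress bound \eqref{est-tau-higher}, one already controls $\vu$ in $L^2(0,T;W^{1,2})$, $\vr$ in $L^\infty(0,T;L^\gamma)$, and $\TT,\eta$ in $L^2(0,T;W^{1,2})$. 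With $\|\vr\|_{L^\infty}$ bounded, the pressure $p(\vr)$ is bounded in $L^\infty$. The next step is to test the momentum equation \eqref{02a} with $\partial_t\vu$ (or $\Delta_x\vu$) to obtain the estimate for $\vu$ in $L^\infty(0,T;W^{1,2})\cap L^2(0,T;W^{2,2})$; here the troublesome terms are $\Div_x\TT$ and the convective term $\vr\vu\cdot\nabla_x\vu$, which are handled using $\|\TT\|_{L^2(0,T;L^\infty)}$ together with the $L^2(0,T;W^{1,2})$ bound on $\TT$ (interpolation gives $\TT\in L^4(0,T;L^\infty)$-type control in $2$D), and using $\|\vr\|_{L^\infty}$ plus the energy bound on $\vr|\vu|^2$ for the convective term. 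Once $\vu\in L^2(0,T;W^{2,2})\hookrightarrow L^2(0,T;W^{1,\infty})$ in $2$D (via Gagliardo--Nirenberg/Brezis--Gallouet type logarithmic estimates, or directly in $2$D the embedding $W^{2,2}\hookrightarrow W^{1,p}$ for all $p<\infty$ combined with a bootstrap), one can propagate the $W^{1,6}$-regularity of $\vr$ along characteristics, giving $\vr\in C([0,T];W^{1,6})$, and feed $\vu$ into the parabolic equations \eqref{03a}, \eqref{04a} to upgrade $\eta,\TT$ to $C([0,T];W^{2,2}_{\bf n})\cap L^2(0,T;W^{3,2})$. Finally $\vu$ is upgraded to $L^2(0,T;W^{2,r})$ by re-examining \eqref{02a} with the now-better right-hand side. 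All constants in these estimates depend only on $T$, the data, and $M(T)$, so no blow-up can occur.

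The main obstacle I anticipate is closing the higher-order energy estimate for $\vu$ in two dimensions using \emph{only} the three quantities in \eqref{criterion1}: in particular, controlling $\int\vr\vu\cdot\nabla_x\vu\cdot\Delta_x\vu$ and the stress term $\int\Div_x\TT\cdot\Delta_x\vu$ without assuming any bound on $\nabla_x\vu$ in $L^\infty$ a priori. This is precisely where the two-dimensionality is essential: one must use the logarithmic Sobolev (Brezis--Gallouet) inequality $\|\vu\|_{W^{1,\infty}}\lesssim \|\vu\|_{W^{2,2}}\sqrt{\log(e+\|\vu\|_{W^{2,r}})}$ or an equivalent Gagliardo--Nirenberg argument, together with a Gr\"onwall lemma of the form $y'\le C\,y\log(e+y)$, to absorb the nonlinearity; the stress diffusion $\e\Delta_x\TT$ is crucial here because it gives the $L^2(0,T;W^{1,2})$ (hence, by interpolation with the $L^\infty$ bound, a space-time $L^p$ with large $p$) control of $\TT$ that makes $\Div_x\TT$ an admissible forcing. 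The refinement over the "obvious" criterion (which would also involve $\nabla_x\vu$ or $\vr$ in stronger norms) is exactly the content of Theorem \ref{thm-strong}, and this logarithmic-Gr\"onwall step in $2$D is the heart of the matter.
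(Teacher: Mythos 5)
The paper itself gives no proof of Theorem \ref{thm-strong}: Remarks \ref{rem-thm-strong1}--\ref{rem-thm-strong2} state that it is proved exactly as Theorems 1.1--1.2 of \cite{Fang-Zi} (following the scheme of \cite{CK03}), the extra diffusion terms only making things easier, and the proof is omitted. Your local-existence outline (linearize in $\bar{\vu}$, transport the density, solve the Neumann parabolic problems for $\eta,\TT$ and the Lam\'e-type parabolic problem for $\vu$, contract on a short interval, use the compatibility condition \eqref{constrain-data}) is in line with that cited strategy, with one caveat: since vacuum is allowed ($\vr_0\geq 0$ only), \eqref{constrain-data} yields $\sqrt{\vr}\,\d_t\vu\in L^2$ at $t=0$, not $\d_t\vu\in L^2$; the weighted quantity is what the Choe--Kim/Fang--Zi scheme actually propagates, and your unweighted claim would not survive on the vacuum set.

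The genuine gap is in the blow-up part. Theorem \ref{thm-strong} is stated for $d\in\{2,3\}$ (note $r=6$ when $d=3$), and the criterion \eqref{criterion1} involving $\|\vr\|_{L^\infty}$, $\|\eta\|_{L^\infty}$ and $\|\TT\|_{L^2_tL^\infty_x}$ is taken from the three-dimensional paper \cite{Fang-Zi}. Your argument, however, is built on two-dimensional tools (Brezis--Gallouet and a logarithmic Gronwall inequality) and you explicitly assert that two-dimensionality is essential and is ``the content of Theorem \ref{thm-strong}''; this conflates \eqref{criterion1} with the refined, density-only criterion \eqref{criterion2}, which is the separate Theorem \ref{thm-criterion} and is the only statement here that is genuinely restricted to $d=2$. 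Moreover, the paper proves that refinement by a different route: uniform-in-time parabolic estimates for $\eta$ and $\TT$ (Propositions \ref{lem-eta-better}, \ref{lem-TT-better}, via Lemma \ref{lem-parabolic-2} and the Bogovski{\u\i} operator), the $\vr|\vu|^\a$ estimate with $\a>2$, and the decomposition $\vu=\vv+\vw$ with Lam\'e solves in the spirit of \cite{SZ11,SWZ11} --- not a logarithmic Gronwall argument. Even taken on its own terms your bootstrap does not close: the interpolation claim that $\TT\in L^2(0,T;L^\infty)\cap L^2(0,T;W^{1,2})$ gives ``$L^4(0,T;L^\infty)$-type'' control is false, and with vacuum permitted the convective term $\int\vr\,\vu\cdot\nabla_x\vu\cdot\Delta_x\vu$ cannot be handled by ``$\|\vr\|_{L^\infty}$ plus the energy bound''; the cited proofs use Hoff-type estimates on the material derivative (equivalently the effective viscous flux structure), which your plan omits. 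As written, the blow-up criterion would not be established in $d=3$ at all, and is incomplete in $d=2$.
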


\begin{remark}\label{rem-thm-strong1}
This local-in-time well-posedness result for strong solution is inspired by the study \cite{CK03} for compressible Navier--Stokes equations.  If the initial density is additionally assumed to have a positive lower bound, condition \eqref{constrain-data} is automatically satisfied. In such a setting, local-in-time well-posedness can be also obtained by employing the method in \cite{Solonnikov80,Valli83}.

\end{remark}

\begin{remark}\label{rem-thm-strong2}

Theorem \ref{thm-strong} is given in a similar manner as Theorem 1.1 and Theorem 1.2 in \cite{Fang-Zi} and can be proved similarly. In fact, there are extra diffusion terms in our model compared to the model considered in \cite{Fang-Zi}, and this makes the proof even easier. Hence, we omit the proof of this theorem. The regularity assumption on initial data may be not optimal and can be relaxed accordingly by employing the argument in \cite{CK03}.

\end{remark}

In two dimensional setting, we offer the following refined blow-up criterion:

\begin{theorem}\label{thm-criterion} Let $d=2$ and $(\vr, \vu, \eta, \TT)$ be the strong solution obtained in Theorem \ref{thm-strong} to \eqref{01a}--\eqref{07a} with a maximal existence time $T_*\in (0,\infty]$. If $T_* < \infty$, there holds
\be\label{criterion2}
\limsup_{T\to T_*} \left( \| \vr \|_{L^\infty((0,T)\times \O )} \right) =\infty.
\ee
\end{theorem}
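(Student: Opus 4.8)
The plan is to bootstrap from the crude blow-up criterion \eqref{criterion1} to the refined one \eqref{criterion2} by showing that, in two space dimensions, an a priori $L^\infty$ bound on $\vr$ alone already controls $\|\eta\|_{L^\infty((0,T)\times\O)}$ and $\|\TT\|_{L^2(0,T;L^\infty)}$. In other words, I would argue by contradiction: suppose $T_*<\infty$ but $\limsup_{T\to T_*}\|\vr\|_{L^\infty((0,T)\times\O)}=:M<\infty$; I will derive that the full blow-up quantity in \eqref{criterion1} stays finite as $T\to T_*$, contradicting Theorem \ref{thm-strong}. The key structural facts I would exploit are: (i) the stress-diffusion terms $\e\Delta_x\eta$ and $\e\Delta_x\TT$ give parabolic smoothing for $\eta$ and $\TT$; (ii) the polymer equations \eqref{03a}, \eqref{04a} are linear in $(\eta,\TT)$ once $\vu$ is given; and (iii) in $d=2$ the energy inequality \eqref{energy1-f} together with the density bound yields strong control of $\vu$ via the parabolic-regularity / Sobolev machinery familiar from 2D Navier--Stokes.

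The steps, in order, would be: First, from $\vr\le M$ and the energy inequality \eqref{energy1-f} (plus \eqref{est-tau-higher}), collect the basic bounds: $\vu\in L^2(0,T;W^{1,2}_0)$, $\vr|\vu|^2\in L^\infty(0,T;L^1)$, $\eta\in L^\infty(0,T;L^1)\cap$ (via the diffusion) better integrability, and $\TT\in L^\infty(0,T;L^2)\cap L^2(0,T;W^{1,2})$. Second, upgrade the velocity regularity: with $\vr$ bounded above (and, crucially, using the continuity equation to propagate a positive lower bound of $\vr$ if one is available, or else arguing with the momentum equation directly as a perturbed Stokes/Lamé system), and using that the source terms $\nabla_x p(\vr)$, $\Div_x\TT$, $\nabla_x q(\eta)$, $\vr\ff$ are controlled, obtain $\vu\in L^\infty(0,T;W^{1,2})\cap L^2(0,T;W^{2,2})$ by a 2D energy estimate on $\nabla_x\vu$ combined with Ladyzhenskaya's inequality $\|f\|_{L^4}^2\lesssim\|f\|_{L^2}\|\nabla f\|_{L^2}$ — this is where the dimension $d=2$ is essential, exactly as in Constantin--Kliegl. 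Third, feed this velocity bound into the $\eta$-equation: \eqref{03a} is a linear advection--diffusion equation $\d_t\eta + \vu\cdot\nabla_x\eta + (\Div_x\vu)\eta = \e\Delta_x\eta$ with $\Div_x\vu\in L^2(0,T;L^\infty)$ (from $\vu\in L^2 W^{2,2}$ in 2D, by Sobolev embedding into $W^{1,p}$ and then $L^\infty$ — more carefully, one uses that $W^{2,2}(\O)\hookrightarrow W^{1,q}$ for all finite $q$ in 2D, hence $\Div_x\vu\in L^2(0,T;W^{1,q})$), so a maximum-principle / Moser-type argument or a Gronwall estimate on $\|\eta\|_{L^\infty}$ (testing the equation against $\eta^{p-1}$ and letting $p\to\infty$, using the homogeneous Neumann condition \eqref{06a}) gives $\eta\in L^\infty((0,T)\times\O)$ with a bound depending only on $M$, $E_0$ and $T_*$. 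Fourth, treat \eqref{04a} analogously: it is linear in $\TT$ with coefficients $\vu$ and $\nabla_x\vu$; testing against $|\TT|^{p-2}\TT$ and using $\nabla_x\vu\in L^2(0,T;L^\infty)$ and the source term $\tfrac{k}{2\l}\eta\,\II$ (now known bounded), a Gronwall argument gives $\TT\in L^\infty(0,T;L^p)$ for every $p<\infty$ with the right quantitative dependence; then parabolic $L^p$-maximal regularity for $\d_t\TT-\e\Delta_x\TT$ with right-hand side $(\nabla_x\vu\,\TT+\TT\,\nabla_x^{\rm T}\vu)+\tfrac{k}{2\l}\eta\II-\tfrac1{2\l}\TT\in L^2(0,T;L^q)$ (large $q$) upgrades $\TT$ to $L^2(0,T;W^{2,q})\hookrightarrow L^2(0,T;L^\infty)$ in 2D. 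Finally, assemble: $\|\vr\|_{L^\infty}\le M$, $\|\eta\|_{L^\infty}\le C(M,E_0,T_*)$, $\|\TT\|_{L^2(0,T;L^\infty)}\le C(M,E_0,T_*)$, so the left side of \eqref{criterion1} remains bounded as $T\to T_*$, contradicting $T_*<\infty$; hence if $T_*<\infty$ then $\limsup_{T\to T_*}\|\vr\|_{L^\infty}=\infty$.

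The main obstacle I anticipate is Step 2: closing the higher-order velocity estimate $\vu\in L^\infty(0,T;W^{1,2})\cap L^2(0,T;W^{2,2})$ using only the upper bound on $\vr$. The momentum equation \eqref{02a} is a compressible system, so one cannot simply invoke Stokes regularity; one must control the material derivative $\d_t\vu+\vu\cdot\nabla_x\vu$ and handle $\nabla_x p(\vr)=a\g\vr^{\g-1}\nabla_x\vr$, which requires propagating $\nabla_x\vr\in L^\infty(0,T;L^6)$ — and that in turn needs $\nabla_x\vu\in L^1(0,T;L^\infty)$, a borderline quantity. The way through, following \cite{CK03, Fang-Zi}, is to run a coupled estimate: differentiate the continuity equation to get a transport equation for $\nabla_x\vr$ with forcing $\nabla_x\vr\,\nabla_x\vu + \vr\,\nabla_x^2\vu$, differentiate/energy-estimate the momentum equation for $\sqrt{\vr}\,\d_t\vu$ and $\nabla_x^2\vu$, and use the 2D Ladyzhenskaya and logarithmic Sobolev inequalities to absorb the nonlinear terms, with all constants tracked as functions of $\|\vr\|_{L^\infty}$, $\|\eta\|_{L^\infty}$, $\|\TT\|_{L^2 L^\infty}$ — but since at this stage we only want to bound $\vu$ given the density bound and the already-available energy-level bounds on $\eta,\TT$, and since the feedback of $\TT,\eta$ into the momentum equation enters only through lower-order (energy-controlled) norms at the $W^{1,2}$-level for $\vu$, the estimate can be closed by Gronwall on the combined quantity $\|\nabla_x\vu(t)\|_{L^2}^2+\|\nabla_x\vr(t)\|_{L^6}^6$ plus its time-integrated higher norms. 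The remaining steps (3)–(4) are then essentially linear parabolic estimates and are comparatively routine; the delicacy there is only bookkeeping of which norm of $\vu$ is needed (it is $\Div_x\vu,\nabla_x\vu\in L^2(0,T;L^\infty)$, safely supplied by Step 2 via 2D Sobolev embedding) and ensuring the homogeneous Neumann boundary conditions \eqref{06a}–\eqref{07a} are respected when integrating by parts in the $L^p$ estimates.
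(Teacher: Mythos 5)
Your overall skeleton (argue by contradiction, show that a bound on $\|\vr\|_{L^\infty}$ alone forces $\|\eta\|_{L^\infty((0,T_*)\times\O)}$ and $\|\TT\|_{L^2(0,T_*;L^\infty)}$ to stay finite, and contradict \eqref{criterion1}) is exactly the right one and matches the paper. But your Step 2, which everything else hinges on, contains both a false embedding claim and an unproved core estimate, so as written the proof has a genuine gap.

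First, the embedding chain feeding Steps 3--4 is wrong: from $\vu\in L^2(0,T;W^{2,2}(\O))$ in $d=2$ you get $\nabla_x\vu\in L^2(0,T;L^q(\O))$ for every finite $q$, but \emph{not} $\Div_x\vu\in L^2(0,T;W^{1,q}(\O))$ for $q>2$ (that would require second derivatives of $\vu$ in $L^q$), hence \emph{not} $\Div_x\vu,\nabla_x\vu\in L^2(0,T;L^\infty(\O))$. So the maximum-principle/Moser route you sketch for $\eta$ and the $L^p$ estimate for $\TT$ are not "safely supplied by Step 2"; in two dimensions $W^{2,2}$ control of $\vu$ is borderline and never yields $L^\infty$ of the gradient. (This part is repairable: as in the paper, one can instead keep the transport terms in divergence form and use the maximal-regularity Lemma \ref{lem-parabolic-1}/\ref{lem-parabolic-2} with sources in $L^2(0,T;L^r)$, $r>2$, so that only $\nabla_x\vu\in L^2(0,T;L^r)$ is needed.)

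Second, and more seriously, the crux of your Step 2 is asserted rather than proved. Closing $\vu\in L^\infty(0,T_*;W^{1,2})\cap L^2(0,T_*;W^{2,2})$ together with a propagated bound $\nabla_x\vr\in L^\infty(0,T_*;L^6)$, using only the upper bound on $\vr$, is precisely the circular borderline problem you flag: propagating $\|\nabla_x\vr\|_{L^6}$ needs $\nabla_x\vu\in L^1(0,T_*;L^\infty)$, which $L^2_tW^{2,2}_x$ does not give in 2D, while any $W^{2,q}$ ($q>2$) estimate for $\vu$ feeds back $\nabla_x p(\vr)\in L^q$, i.e.\ the very density-gradient bound you are trying to produce; saying "the estimate can be closed by Gronwall on $\|\nabla_x\vu\|_{L^2}^2+\|\nabla_x\vr\|_{L^6}^6$" does not resolve this, and indeed the Choe--Kim/Fang--Zi scheme yields such bounds only locally in time, not up to $T_*$ under a mere density bound. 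The paper avoids the issue entirely and in a different order: (i) it first bootstraps $\eta,\TT\in L^\infty(0,T_*;L^r)\cap L^2(0,T_*;W^{1,r})$ for all $r<\infty$ (Propositions \ref{lem-eta-better}, \ref{lem-TT-better}) using only energy-level bounds, Lemma \ref{lem-parabolic-2} with divergence-form sources and the Bogovski{\u\i} operator, with no density bound and no improved $\vu$; (ii) it proves $\vr|\vu|^\a\in L^\infty(0,T_*;L^1)$ for some $\a>2$; (iii) it then uses the Hoff/Sun--Zhang decomposition $\vu=\vw+\LLL(\nabla_x p(\vr))+\LLL(\nabla_x q(\eta))-\LLL(\Div_x\TT)$, so that the density enters only through $\LLL(\nabla_x p(\vr))$, controlled by Lemma \ref{lem-Lame}(ii) from $\vr\in L^\infty$ alone (and $\d_t\vv_\vr$ via the continuity equation in divergence form), and closes an energy estimate for $\vw$ on $(T_1,T_*)$ with $T_1$ chosen so close to $T_*$ that $\|\nabla_x\vu\|_{L^2((T_1,T_*)\times\O)}$ is small and the critical term is absorbed. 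This gives only $\vu\in L^\infty(0,T_*;W^{1,2})\cap L^2(0,T_*;W^{1,r})$ --- never $L^2_tW^{2,2}_x$ for $\vu$ itself and never any bound on $\nabla_x\vr$ --- and that is already enough, via Lemma \ref{lem-parabolic-1}, to conclude $\eta,\TT\in L^\infty(0,T_*;W^{1,r})\hookrightarrow L^\infty$. To make your argument complete you would either have to supply the missing coupled estimate up to $T_*$ (which is not known to close under only $\vr\in L^\infty$) or switch to a decomposition argument of this type.
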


\begin{remark}\label{rem-criterion1}

The blow-up criterion \eqref{criterion1}, which is reproduced from \cite{Fang-Zi}, is inspired by the related study for compressible Navier--Stokes equations in \cite{SZ11,SWZ11} and for incompressible Oldroyd--B model in \cite{CN01}. Our refined one in Theorem \ref{thm-criterion} coincides with those in \cite{SZ11,SWZ11} where only the upper bound of the fluid density is needed. Such a refinement crucially depends on the two dimensional setting and the presence of the diffusions in $\TT$ and $\eta$. This setting allows to obtain improved estimates in $\TT$ and $\eta$ that are uniform in time (see Proposition \ref{lem-eta-better} and Proposition \ref{lem-TT-better} later on). 
In three dimensional setting, it is not known whether one can get such an improvement.

\end{remark}

\subsection{Weak-strong uniqueness and conditional regularity}

Still in two dimensional setting, we show the following weak-strong uniqueness result, provided the fluid density and polymer number density admit a positive lower bound.

\begin{theorem}\label{thm-ws} Let $d=2$. Let $(\vr, \vu, \eta, \TT)$ be a finite energy weak solution obtained in Theorem \ref{thm-weak} and let $(\tvr, \tvu, \teta, \tTT)$ be the strong solution obtained in Theorem \ref{thm-strong} with the same initial data satisfying the assumption in Theorem \ref{thm-strong}. If in addition the initial data satisfy
\be\label{bound-lower}
 \inf_{ \O} \vr_0 >0,\quad \inf_{\O} \eta_0 > 0,
\ee
then there holds
\be\label{weak=strong}
(\vr, \vu, \eta, \TT)=(\tvr, \tvu, \teta, \tTT)\quad  \mbox{in} \quad [0,T_*)\times \O.
\ee

\end{theorem}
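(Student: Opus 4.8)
\textbf{Proof plan for Theorem \ref{thm-ws} (weak--strong uniqueness).}
The strategy is the standard relative entropy (relative energy) method adapted to the compressible Oldroyd--B system, following the approach developed for the compressible Navier--Stokes equations by Feireisl, Jin and Novotn\'y and its extensions. First I would introduce the relative entropy functional measuring the distance between the weak solution $(\vr,\vu,\eta,\TT)$ and the strong solution $(\tvr,\tvu,\teta,\tTT)$, namely something of the form
\be
\mathcal{E}\big((\vr,\vu,\eta,\TT)\,\big|\,(\tvr,\tvu,\teta,\tTT)\big)(t)
= \intO{ \Big[ \tfrac12 \vr|\vu-\tvu|^2 + \big(H(\vr)-H'(\tvr)(\vr-\tvr)-H(\tvr)\big) \Big] }
+ \mathcal{E}_\eta + \mathcal{E}_\TT,
\ee
where $H(\vr)=\frac{a}{\g-1}\vr^\g$ is the pressure potential, and $\mathcal{E}_\eta$, $\mathcal{E}_\TT$ are quadratic (Bregman-type) distances for the polymer number density and the extra stress tensor, e.g. $\mathcal{E}_\eta = \intO{\frac12|\eta-\teta|^2}$ (or a relative-entropy form $kL\,(\eta\log(\eta/\teta)-(\eta-\teta))$ when $\de=0$, matching the energy inequality) and $\mathcal{E}_\TT = \intO{\frac12|\TT-\tTT|^2}$ which is legitimate since $\TT$ enters the energy only through $\tr\TT$ but has the extra $L^2$-bound \eqref{est-tau-higher}.

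The core step is to derive a differential inequality of Gr\"onwall type,
\be
\mathcal{E}(t) \le \mathcal{E}(0) + \int_0^t \big( \text{nonnegative dissipation terms with sign}\big)\,\dt'
+ C\int_0^t \mathcal{E}(\tau)\,\dt' + \big(\text{remainder}\big),
\ee
with $\mathcal{E}(0)=0$ because the two solutions share the same initial data. This is obtained by combining the energy inequality \eqref{energy1-f} for the weak solution with the weak formulations \eqref{weak-form1-f}--\eqref{weak-form4-f} tested against quantities built from the strong solution (using $\tvu$, $H'(\tvr)$, $\teta$, $\tTT$ as test functions, which is permissible by the regularity \eqref{est-strong}), and then subtracting the equations satisfied by the strong solution integrated against itself. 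The viscous dissipation $\mu|\nabla_x(\vu-\tvu)|^2 + \nu|\Div_x(\vu-\tvu)|^2$ and the stress diffusion terms $\e|\nabla_x(\eta-\teta)|^2$, $\e|\nabla_x(\TT-\tTT)|^2$ appear with the good sign on the left-hand side and are used to absorb the worst cross terms via Korn/Poincar\'e and interpolation. The positive lower bounds $\inf_\O \vr_0>0$ and $\inf_\O\eta_0>0$ propagate to positive lower bounds for $\tvr$ and $\teta$ on $[0,T_*)$ (from the transport/parabolic structure of \eqref{01a}, \eqref{03a} and the regularity of the strong solution), which are needed to control terms where one divides by the density, e.g. $\intO{\vr|\vu-\tvu|^2}$ dominating the convective remainder, and to make the relative-entropy form for $\eta$ coercive. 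Once the inequality $\mathcal{E}(t)\le C\int_0^t\mathcal{E}\,\dt'$ is established on any compact subinterval $[0,T']\subset[0,T_*)$, Gr\"onwall's lemma forces $\mathcal{E}\equiv 0$, hence $\vu=\tvu$ a.e., $\vr=\tvr$, $\eta=\teta$, $\TT=\tTT$, i.e. \eqref{weak=strong}.

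The main obstacle is the rigorous treatment of the coupling terms between the polymer stress $\TT$ and the velocity gradient, specifically the bilinear term $\nabla_x\vu\,\TT + \TT\,\nabla_x^{\rm T}\vu$ in \eqref{04a} and the feedback $\Div_x\TT$ in \eqref{02a}: after forming differences one is left with remainders like $\intO{(\nabla_x(\vu-\tvu))(\TT-\tTT):\tTT}$ and $\intO{\nabla_x\tvu\,(\TT-\tTT):(\TT-\tTT)}$, which are genuinely quadratic in the fluctuations times a factor that is only $L^2_t$ in the gradient of the strong velocity (this is exactly why $d=2$ and the extra bound \eqref{est-tau-higher} are invoked). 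Controlling these requires the two-dimensional Ladyzhenskaya/Gagliardo--Nirenberg inequality $\|w\|_{L^4}^2 \lesssim \|w\|_{L^2}\|\nabla w\|_{L^2}$ together with the stress diffusion to buy an extra half-derivative on $\TT-\tTT$, closing the estimate with a Young's inequality so that the $\e|\nabla_x(\TT-\tTT)|^2$ dissipation absorbs the top-order part while the lower-order part contributes to $C\int_0^t\mathcal{E}$. A secondary technical point, handled routinely once the lower bounds are in place, is that when $\de=0$ the $\eta$-part of the relative entropy must be taken in the $\eta\log\eta$ form and its time evolution matched against the corresponding terms in \eqref{energy1-f}, including the source $\frac{kd}{4\l}\int\eta$, which cancels against the analogous source for $\teta$ up to a term controlled by $\mathcal{E}_\eta$.
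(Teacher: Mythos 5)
Your proposal follows essentially the same route as the paper: a relative entropy built from the Bregman distances of $H(\vr)$ and of $G(\eta)=kL\eta\log\eta+\de\eta^2$, combined with a plain $L^2$ distance for $\TT-\tTT$, lower bounds propagated from \eqref{bound-lower} to $\tvr,\teta$ along the strong solution, two-dimensional interpolation to close the estimates, and a Gr\"onwall argument for the total functional $\CalE_1+\CalE_2+\tfrac12\|\TT-\tTT\|_{L^2}^2$ (the paper merely keeps the $\TT$-part out of the relative entropy inequality of Proposition \ref{prop-entropy} and adds the $L^2$ estimate for $\TT-\tTT$ afterwards, which is only a bookkeeping difference). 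One caveat: to justify testing the $\TT$-equation (and, if you use a quadratic $\eta$-distance, the $\eta$-equation) with the weak solution itself, the bound \eqref{est-tau-higher} is not sufficient to make sense of the cubic terms such as $\int(\vu\,\TT)::\nabla_x(\TT-\tTT)$; the paper uses the higher integrability $\eta,\TT\in L^\infty(0,T_*;L^r)\cap L^2(0,T_*;W^{1,r})$ from Propositions \ref{lem-eta-better} and \ref{lem-TT-better} (available because the initial data are regular), and your argument should invoke the same.
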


Finally, as a corollary of Theorem \ref{thm-criterion} and Theorem \ref{thm-ws}, we have the following conditional regularity result for finite energy weak solutions.
\begin{theorem}\label{thm-con-reg} Let $d=2$. Let $(\vr, \vu, \eta, \TT)$ be a finite energy weak solution obtained in Theorem \ref{thm-weak} with initial data satisfying the assumption in Theorem \ref{thm-strong} and the additional lower bound constrain \eqref{bound-lower}. If for some $T>0$ there holds the upper bound
\be\label{bound-con-reg}
\sup_{(0,T)\times \O} \vr <\infty,
\ee
then the weak solution $(\vr, \vu, \eta, \TT)$ is actually a strong one satisfying the estimates \eqref{est-strong} over time interval $[0,T]$.

\end{theorem}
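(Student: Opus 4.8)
\medskip

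\noindent\emph{Proof proposal.}
The plan is to derive the statement as a formal consequence of three facts already established in the excerpt: the local well-posedness and standard blow-up criterion of Theorem~\ref{thm-strong}, the refined (density-only) blow-up criterion of Theorem~\ref{thm-criterion}, and the weak-strong uniqueness of Theorem~\ref{thm-ws}; no new a priori estimate is needed. First I would produce the strong solution: by hypothesis the initial data satisfy \eqref{ini-data-f}, \eqref{ini-data-s} and the lower bound \eqref{bound-lower}, and since $\inf_\O\vr_0>0$ the compatibility condition \eqref{constrain-data} holds automatically (cf. Remark~\ref{rem-thm-strong1}); hence Theorem~\ref{thm-strong} yields a unique strong solution $(\tvr,\tvu,\teta,\tTT)$ with maximal existence time $T_*\in(0,\infty]$ obeying \eqref{est-strong} on every $[0,T']$ with $T'<T_*$.

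\medskip

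Next I would invoke weak-strong uniqueness on the common time interval $[0,\min\{T,T_*\})$: there the given finite energy weak solution and the strong solution above are issued from the same data, which in addition satisfies \eqref{bound-lower}, so Theorem~\ref{thm-ws} (applicable since $d=2$) gives $(\vr,\vu,\eta,\TT)=(\tvr,\tvu,\teta,\tTT)$ on $[0,\min\{T,T_*\})\times\O$.

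\medskip

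The only substantive point is then to show $T_*>T$. Arguing by contradiction, suppose $T_*\le T$, so in particular $T_*<\infty$. By the previous step $\vr=\tvr$ on $[0,T_*)\times\O$, hence by the assumed upper bound \eqref{bound-con-reg},
\[
\limsup_{t\to T_*}\ \|\tvr\|_{L^\infty((0,t)\times\O)}
=\limsup_{t\to T_*}\ \|\vr\|_{L^\infty((0,t)\times\O)}
\le \sup_{(0,T)\times\O}\vr<\infty ,
\]
which contradicts the refined blow-up criterion \eqref{criterion2}. Hence $T_*>T$, so $[0,T]$ is a compact subinterval of $[0,T_*)$: the strong solution exists on $[0,T]$ and satisfies \eqref{est-strong} there, and by the uniqueness step it coincides with $(\vr,\vu,\eta,\TT)$ on $[0,T]\times\O$, which is the claim.

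\medskip

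I expect the main (and essentially only) difficulty to be a matter of care rather than of analysis: one must keep track of the distinction between the \emph{open} existence interval $[0,T_*)$ in Theorems~\ref{thm-strong}--\ref{thm-criterion} and the \emph{closed} interval $[0,T]$ in the statement, and must use the strict blow-up $\limsup=\infty$ in \eqref{criterion2} to obtain $T_*>T$ — not merely $T_*\ge T$ — so that $[0,T]\subset[0,T_*)$ and \eqref{est-strong} is genuinely valid up to time $T$. All the real work, namely the time-uniform improved bounds on $\eta$ and $\TT$ and the relative-entropy inequality, is already contained in the proofs of Theorems~\ref{thm-criterion} and \ref{thm-ws}.
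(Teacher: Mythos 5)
Your proposal is correct and follows essentially the same route as the paper: produce the strong solution from Theorem \ref{thm-strong}, use Theorem \ref{thm-ws} on the common interval to transfer the density bound \eqref{bound-con-reg} to $\tvr$, rule out $T_*\le T$ via the refined criterion \eqref{criterion2}, and then conclude with weak-strong uniqueness on $[0,T]\subset[0,T_*)$. The only cosmetic difference is that the paper phrases the contradiction as "bounded density up to $T_*$ forces $T_*=\infty$," while you contradict the $\limsup=\infty$ statement directly; these are the same argument.
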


\medskip

The rest of the paper is devoted to the proof of Theorems \ref{thm-criterion}, \ref{thm-ws} and  \ref{thm-con-reg}. In Section \ref{sec-prelim}, we recall some necessary lemmas.  Theorems \ref{thm-criterion}, \ref{thm-ws} and  \ref{thm-con-reg} are proved in Sections \ref{sec-thm-criterion} and \ref{sec-thm-ws-unique}, respectively. Throughout the paper, $C$ denotes some uniform constant of which the value may differ from line to line.

\section{Preliminaries}\label{sec-prelim}

In this section we recall some technical tools that will be required in the paper. The first one considers the Dirichlet problems of Lam\'e systems:
\begin{lemma}\label{lem-Lame}
Let $\mu>0, \ \nu\geq 0,$ $G \subset \R^d$ be a bounded $C^{2,\b}$ domain with $\b\in (0,1)$. Let $\vu$ be the unique weak solution to
\be\label{sys-Lame}
\left\{\begin{aligned} & \mathcal{L}\vu:=- \mu \Delta_x \vu - \nu \nabla_x \Div_x\vu = \ff, \quad &&\mbox{in}\ G,\\
&\vu =0, \quad &&\mbox{on}\ \d G,
\end{aligned}\right.
\ee
for some $\ff$ as following.
\begin{itemize}
\item[(i).] If $\ff\in L^r(G), \ 1<r<\infty$, then $$\|\vu\|_{W_0^{1,r}\cap W^{2,r}(G)} \leq C(\mu, r,d,G) \|\ff\|_{L^{r}(G)}. $$

\item [(ii).] If $\ff = \Div_x F$ with $F = (F_{i,j})_{1\leq i,j\leq d} \in L^r(G), \ 1<r<\infty$, then $$\|\vu\|_{W_0^{1,r}(G)} \leq C(\mu, r,d,G) \|F\|_{L^{r}(G)}. $$

    \item [(iii).] If $\ff = \Div_x F$, $F = (F_{i,j})_{1\leq i,j\leq d}$ with $F_{i,j} = \Div_x H_{i,j}$ with $H = (H_{i,j,k})_{1\leq i,j,k\leq d} \in L^r(G), \ 1<r<\infty$, then $$\|\vu\|_{L^{r}(G)} \leq C(\mu, r,d,G) \|H\|_{L^{r}(G)}. $$


\end{itemize}

\end{lemma}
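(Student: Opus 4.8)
\emph{Proof proposal.} The three bounds amount to the boundedness of the solution operator $\mathcal{L}^{-1}$ of the Dirichlet--Lam\'e problem on the Sobolev scale, and the plan is to treat them in the order (i) $\Rightarrow$ (iii) $\Rightarrow$ (ii): the $L^r$-estimates for $\vu$ follow from (i) by duality, while the gradient estimate in (ii) is the only genuinely delicate point. For (i) I would first note that $\mathcal{L}=-\mu\Delta_x-\nu\nabla_x\Div_x$ is strongly elliptic --- its principal symbol $\xi\mapsto\mu|\xi|^2 I+\nu\,\xi\otimes\xi$ has eigenvalues $\mu|\xi|^2$ (multiplicity $d-1$) and $(\mu+\nu)|\xi|^2$, all positive for $\xi\neq0$ since $\mu>0$, $\nu\geq0$ --- so the Dirichlet conditions satisfy the Agmon--Douglis--Nirenberg complementing condition, and the ADN $L^r$-theory on the $C^{2,\beta}$ domain $G$ (where $C^{1,1}$ already suffices) gives $\|\vu\|_{W^{2,r}(G)}\leq C(\|\ff\|_{L^r(G)}+\|\vu\|_{L^r(G)})$. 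The lower-order term is removed by the usual compactness--uniqueness argument, uniqueness being immediate from the energy identity ($\mathcal{L}\vw=0$, $\vw\in W_0^{1,2}$ force $\mu\|\nabla_x\vw\|_{L^2}^2+\nu\|\Div_x\vw\|_{L^2}^2=0$, hence $\vw=0$). Alternatively one may simply invoke the $L^r$-theory of the Lam\'e system as in \cite{Solonnikov80,Valli83}.

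For (iii), and for the $L^r$-part of (ii), I would argue by duality against (i). Since $\mathcal{L}$ is formally self-adjoint, given $\vg\in L^{r'}(G;\R^d)$ ($r'$ conjugate to $r$) let $\vc{z}=\vc{z}_{\vg}\in W^{2,r'}\cap W_0^{1,r'}(G)$ solve $\mathcal{L}\vc{z}=\vg$, $\vc{z}=\mathbf{0}$ on $\partial G$; by (i), $\|\vc{z}\|_{W^{2,r'}(G)}\leq C\|\vg\|_{L^{r'}(G)}$. Pairing the equation for $\vu$ with $\vc{z}$ and integrating by parts (rigorously first for smooth compactly supported data, then by density) gives, in case (ii),
\[
\int_G\vu\cdot\vg\,\dx=\int_G\bigl(\mu\,\nabla_x\vu:\nabla_x\vc{z}+\nu\,\Div_x\vu\,\Div_x\vc{z}\bigr)\dx=-\int_G F:\nabla_x\vc{z}\,\dx ,
\]
so $\|\vu\|_{L^r(G)}\leq C\|F\|_{L^r(G)}$ after taking the supremum over $\vg$; in case (iii) one integrates by parts twice to get $\int_G\vu\cdot\vg\,\dx=\int_G H::\nabla_x^2\vc{z}\,\dx$, whence $\|\vu\|_{L^r(G)}\leq C\|H\|_{L^r(G)}$, which is exactly (iii).

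The remaining point --- and the main obstacle --- is to upgrade $\|\vu\|_{L^r}\leq C\|F\|_{L^r}$ to the full estimate $\|\vu\|_{W_0^{1,r}}\leq C\|F\|_{L^r}$ of (ii); here plain duality is circular, since estimating $\int_G\nabla_x\vu:\Phi$ via the adjoint problem reproduces (ii) with $r$ replaced by $r'$. I would close this in one of two equivalent ways. First, by interpolation: part (i) gives $\mathcal{L}^{-1}\colon L^r(G)\to W^{2,r}(G)$ bounded and, after the standard identification of $W^{-2,r}(G)$ with $\Div_x\Div_x\bigl(L^r(G)\bigr)$ up to equivalent norms (a de Rham-type lemma), part (iii) gives $\mathcal{L}^{-1}\colon W^{-2,r}(G)\to L^r(G)$ bounded; complex interpolation of this pair at $\theta=\tfrac12$ yields $\mathcal{L}^{-1}\colon W^{-1,r}(G)\to W^{1,r}(G)$, and composition with $\Div_x\colon L^r(G)\to W^{-1,r}(G)$ gives the estimate, the zero trace being inherited by approximating $F$ in $L^r$ by $C_c^\infty$ fields (for which (i) already puts $\mathcal{L}^{-1}\Div_x F$ in $W_0^{1,r}$). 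Second, and more directly, $\nabla_x\mathcal{L}^{-1}\Div_x$ is a Calder\'on--Zygmund operator: on $\R^d$ its Fourier symbol is a matrix of rational functions homogeneous of degree $0$ and smooth off the origin, and on the bounded $C^{2,\beta}$ domain the corresponding second-order bound for the Dirichlet Green's matrix of $\mathcal{L}$ is classical, so it is bounded on $L^r(G)$ for every $1<r<\infty$. Either route is the technical heart of the lemma; everything else (namely (i), (iii), and the $L^r$-bound in (ii)) is routine.
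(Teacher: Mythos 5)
Your outline is essentially correct, but it is worth saying up front that the paper does not prove Lemma \ref{lem-Lame} at all: it treats the three estimates as classical facts about linear elliptic (Lam\'e/Stokes-type) systems and simply refers to the books \cite{Galdi-book, N-book}, where the analogous $L^r$-estimates are established by the Agmon--Douglis--Nirenberg/potential-theoretic machinery. What you have done is reconstruct that classical proof: (i) via strong ellipticity of $-\mu\Delta_x-\nu\nabla_x\Div_x$ and ADN theory plus the compactness--uniqueness removal of the lower-order term (with uniqueness from the energy identity, after a small bootstrap to reach $W^{1,2}_0$ when $r<2$); (iii) and the $L^r$-part of (ii) by transposition against (i), made rigorous by approximating the data, which is exactly the standard ``very weak solution'' argument; and the genuinely delicate gradient estimate in (ii), which you correctly isolate and for which you propose either interpolation between the $L^r\to W^{2,r}$ and $W^{-2,r}\to L^r$ bounds or a Calder\'on--Zygmund/Green-matrix argument. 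Both routes are legitimate, but note that route A quietly relies on two further nontrivial facts --- the representation of every functional in $W^{-2,r}(G)$ as $\Div_x\Div_x$ of an $L^r$ matrix field with norm control, and the complex-interpolation identities $[W_0^{2,r'},L^{r'}]_{1/2}=W_0^{1,r'}$ (hence $[W^{-2,r},L^r]_{1/2}=W^{-1,r}$) on a bounded domain, together with compatibility of the two extensions of $\LLL^{-1}$ --- so in a complete write-up these would themselves need citations, at which point one may as well quote the divergence-form $W^{1,r}$ theory directly (route B, or the references the paper uses, or the Lam\'e literature \cite{Solonnikov80,Valli83}). In short: no gap in substance, your sketch is a faithful expansion of what the paper outsources to the literature; the paper's approach buys brevity, yours makes explicit where the real work (part (ii)) lies.
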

In the sequel, we will use $\mathcal{L}^{-1}(\ff)$ to denote the solution $\vu$ to \eqref{sys-Lame}. 

\medskip

We then recall the following Gagliardo--Nirenberg inequality
 \begin{lemma}\label{lem-GN}
 Let $G\subset\R^d$ be a bounded Lipschitz domain; then, for any $r\in [2,\infty)$ if $d=2$, and $r\in [2,6]$ if $d=3$, one has, for any $v\in W^{1,2}(G)$, that:
\be\label{G-N-ineq}
\|v\|_{L^r(G)} \leq C(r,d,G) \|v\|_{L^2(G)}^{1-\th} \|v\|_{W^{1,2}(G)}^\th,\quad \th:=d\big(\frac{1}{2}-\frac{1}{r}\big).\nn
\ee

\end{lemma}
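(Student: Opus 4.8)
The inequality is classical, so the plan is to deduce it from the corresponding estimate on the whole space $\R^d$, where the Gagliardo--Nirenberg interpolation inequality is available in scale-invariant (homogeneous) form. Since $G$ is a bounded Lipschitz domain, it admits a universal (Stein-type) extension operator $E\colon v\mapsto Ev$ which is simultaneously bounded from $L^2(G)$ into $L^2(\R^d)$ and from $W^{1,2}(G)$ into $W^{1,2}(\R^d)$, with operator norms depending only on $d$ and $G$, and which satisfies $Ev=v$ a.e.\ on $G$. In particular $\|Ev\|_{L^2(\R^d)}\le C(G)\,\|v\|_{L^2(G)}$ and $\|\nabla(Ev)\|_{L^2(\R^d)}\le\|Ev\|_{W^{1,2}(\R^d)}\le C(G)\,\|v\|_{W^{1,2}(G)}$. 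Hence it suffices to prove, for $w:=Ev\in W^{1,2}(\R^d)$, the whole-space bound
\[
\|w\|_{L^r(\R^d)}\le C(r,d)\,\|w\|_{L^2(\R^d)}^{1-\theta}\,\|\nabla w\|_{L^2(\R^d)}^{\theta},\qquad \theta=d\Big(\tfrac12-\tfrac1r\Big),
\]
since then $\|v\|_{L^r(G)}\le\|w\|_{L^r(\R^d)}$ and the two extension bounds give the claimed inequality on $G$ with a constant depending on $r,d,G$.

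For $d=3$ the whole-space estimate is elementary: from the (homogeneous) Sobolev inequality $\|w\|_{L^6(\R^3)}\le C\|\nabla w\|_{L^2(\R^3)}$ and the standard $L^p$-interpolation $\|w\|_{L^r}\le\|w\|_{L^2}^{1-\theta}\|w\|_{L^6}^{\theta}$, valid for $\tfrac1r=\tfrac{1-\theta}{2}+\tfrac{\theta}{6}$ with $\theta\in[0,1]$, i.e.\ for $r\in[2,6]$, one obtains $\|w\|_{L^r}\le C\|w\|_{L^2}^{1-\theta}\|\nabla w\|_{L^2}^{\theta}$; solving $\tfrac6r=3-2\theta$ gives exactly $\theta=3(\tfrac12-\tfrac1r)$, and the endpoints $r=2$ ($\theta=0$, trivial) and $r=6$ ($\theta=1$, the Sobolev inequality) are consistent. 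For $d=2$ there is no $L^\infty$ endpoint, so one cannot merely interpolate Sobolev embeddings; instead one invokes the genuine two-dimensional Gagliardo--Nirenberg inequality $\|w\|_{L^r(\R^2)}\le C(r)\,\|w\|_{L^2(\R^2)}^{2/r}\|\nabla w\|_{L^2(\R^2)}^{1-2/r}$, valid for every $r\in[2,\infty)$, whose prototype ($r=4$) is Ladyzhenskaya's inequality $\|w\|_{L^4(\R^2)}^2\le\sqrt2\,\|w\|_{L^2}\|\nabla w\|_{L^2}$, itself proved by the one-dimensional bound $|w(x_1,x_2)|^2\le 2\int_\R|w\,\partial_1 w|\,ds$ (and its analogue in $x_2$) together with Fubini. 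Here $1-\theta=2/r$ and $\theta=1-2/r=2(\tfrac12-\tfrac1r)$, as required, and the two cases together cover precisely the ranges of $r$ asserted in the lemma.

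There is no genuine obstacle — the argument is entirely standard — but two points deserve care. First, the extension operator must be bounded on $L^2(G)$ as well as on $W^{1,2}(G)$, so that the $L^2(G)$ norm (not the full $W^{1,2}(G)$ norm) carries the exponent $1-\theta$ on the right-hand side; this is exactly what Stein's universal extension for Lipschitz domains provides, and it is the only place where the Lipschitz regularity of $G$ is used. Second, in the case $d=2$ the full range $r\in[2,\infty)$ is reached only because one uses the true two-dimensional Gagliardo--Nirenberg inequality rather than a Hölder interpolation between $L^2(G)$ and some $L^q(G)$ with $q<\infty$; the latter would produce a strictly larger exponent than $\theta=2(\tfrac12-\tfrac1r)$. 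Alternatively, since this is a preliminary fact, one may simply quote the statement from the standard references on Gagliardo--Nirenberg interpolation inequalities.
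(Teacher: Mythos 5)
Your proposal is correct. Note, however, that the paper does not prove Lemma \ref{lem-GN} at all: it states that the inequality is classical and refers to the books of Galdi and Novotn\'y--Stra\v{s}kraba, so your last remark (``one may simply quote the statement from the standard references'') is in fact exactly what the authors do. Your extension-operator derivation is a legitimate, self-contained alternative: reduce to the homogeneous whole-space inequality via a Stein-type extension $E$ that is simultaneously bounded from $L^2(G)$ to $L^2(\R^d)$ and from $W^{1,2}(G)$ to $W^{1,2}(\R^d)$ (this simultaneous boundedness is indeed part of Stein's theorem for Lipschitz domains, and, as you correctly stress, it is what lets the $L^2(G)$ norm, rather than the full $W^{1,2}(G)$ norm, carry the exponent $1-\theta$), then conclude by Sobolev-plus-interpolation when $d=3$ (your exponent bookkeeping $\theta=3(\tfrac12-\tfrac1r)$, $r\in[2,6]$, is right) and by the genuine two-dimensional Gagliardo--Nirenberg inequality when $d=2$. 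The only place where you lean on an unproved fact is the 2D inequality for general $r\in[2,\infty)$, which you quote with the Ladyzhenskaya prototype; that is acceptable here (and can be made self-contained by applying the Ladyzhenskaya argument to $|w|^{r/2}$ and iterating), so the argument buys a complete proof where the paper settles for a citation, at the modest cost of invoking the extension theorem.
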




Lemma \ref{lem-Lame} is a collection of classical estimates for linear elliptic systems. The Gagliardo--Nirenberg inequality is rather classical. We refer to the books \cite{Galdi-book, N-book} for the proof.


\medskip

Now we recall some regularity results for parabolic Neumann problems. We first introduce fractional-order Sobolev spaces. Let $G$ be the whole space $\R^d$ or a bounded Lipschitz domain in $\R^d$. For any $k\in \N$, $\b \in (0,1)$ and $s\in [1,\infty) $, we define
\[
 W^{k+\b,s}(G):=\big\{v\in W^{k,s}(G) : \| v \|_{W^{k+\b,s}(G)}<\infty \big\},
\]
where
$$
\| v \|_{W^{k+\b,s}(G)}:=\| v \|_{W^{k,s}(G)} + \sum_{|\a|=k} \left(\int_G\int_G
\frac{|\d^\a v(x)- \d^\a v(y)|^s}{|x-y|^{d+\b s}} \, \dx \,{\rm d}y\right)^{\frac{1}{s}}.
$$

The following classical results are taken from Section 7.6.1 in \cite{N-book}. Consider the parabolic initial-boundary value problem:
\ba\label{para-ib-Neumann}
&\d_t \rho - \e\, \Delta_x \rho = h \ \mbox{in} \ (0,T)\times G;
\quad \rho(0,\cdot) = \rho_0 \ \mbox{in} \   G; \quad \d_{\bf n}\rho =0 \ \mbox{in} \ (0,T)\times \d G.
\ea
Here $\e>0$, $\rho_0$ and $h$ are known functions, and $\rho$ is the unknown solution. The first regularity result of relevance to us here is the following lemma.

\smallskip

\begin{lemma}\label{lem-parabolic-1}
Let $0<\beta < 1, \ 1<p,q<\infty$,  $G\subset \R^d$ be a bounded $C^{2,\beta}$ domain with $\beta\in (0,1)$,
$$
\rho_0 \in W^{2-\frac{2}{p},q}_{\bf n},\quad h\in L^p(0,T;L^q(G)),
$$
where $W^{2-\frac{2}{p},q}_{\bf n}$ is the completion of the linear space $\{v \in  C^\infty(\overline G): \ \d_{\bf n} v|_{\d G} = 0 \}$ with respect to norm $W^{2-\frac{2}{p},q}(G)$.
Then, there exists a unique function $\rho$ satisfying
$$
\rho \in L^p(0,T;W^{2,q}(G)) \cap C([0,T];W^{2-\frac{2}{p},q}(G)), \quad \d_t \rho \in L^p(0,T;L^q(G))
$$
solving \eqref{para-ib-Neumann} in $(0,T)\times G$; in addition, $\rho$ satisfies the Neumann boundary condition in $\eqref{para-ib-Neumann}$ in the sense of the normal trace, which is well defined since $\Delta_x \rho \in L^p(0,T;L^{q}(G))$.
Moreover,
\begin{align*}
\e^{1-\frac{1}{p}}\|\rho\|_{L^\infty(0,T;W^{2-\frac{2}{p},q}(G))}
+ \left\|\partial_t\rho \right\|_{L^p(0,T; L^q(G))}
+ \e \|\rho\|_{L^p(0,T; W^{2,q}(G))}  \nonumber\\
\leq C(p,q,G)\big[\e^{1-\frac{1}{p}}
\|\rho_0\|_{W^{2-\frac{2}{p},q}(G)} + \|h\|_{L^p(0,T;L^q(G))}\big].
\end{align*}

\end{lemma}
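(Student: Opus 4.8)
The plan is to reduce the statement, by a parabolic rescaling of time, to the classical $L^p$-$L^q$ maximal regularity theory for the heat equation with homogeneous Neumann boundary conditions, that is, to the case $\e=1$, which already contains existence, uniqueness, the regularity class, the continuity in time and the estimate; one then reads off the exact powers of $\e$ from the way the space-time norms transform under the scaling.

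First I would settle the case $\e=1$. On a bounded $C^{2,\beta}$ domain the operator $-\Delta_x$ with the homogeneous Neumann condition generates a bounded analytic semigroup on $L^q(G)$ and enjoys maximal $L^p$ regularity; hence for $h\in L^p(0,T;L^q(G))$ and $\rho_0$ in the associated trace space (the Slobodeckij space $W^{2-2/p,q}_{\bf n}$ of the statement) there is a unique $\rho$ with $\partial_t\rho\in L^p(0,T;L^q(G))$ and $\Delta_x\rho\in L^p(0,T;L^q(G))$ solving \eqref{para-ib-Neumann} with $\e=1$, together with
\[
\|\partial_t\rho\|_{L^p(0,T;L^q)}+\|\Delta_x\rho\|_{L^p(0,T;L^q)}\le C\big(\|\rho_0\|_{W^{2-2/p,q}}+\|h\|_{L^p(0,T;L^q)}\big).
\]
Two routine add-ons complete this case: elliptic regularity for the Neumann problem, $\|v\|_{W^{2,q}(G)}\le C(\|\Delta_x v\|_{L^q(G)}+\|v\|_{L^q(G)})$, turns $\Delta_x\rho\in L^p(L^q)$ into $\rho\in L^p(0,T;W^{2,q}(G))$; and the trace/embedding theorem for the mixed class $W^{1,p}(0,T;L^q)\cap L^p(0,T;W^{2,q})$ yields $\rho\in C([0,T];W^{2-2/p,q}(G))$ with $\sup_t\|\rho(t)\|_{W^{2-2/p,q}}$ bounded by the same right-hand side, and makes the normal trace meaningful since $\Delta_x\rho\in L^p(L^q)$. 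Uniqueness is immediate from linearity and the a priori bound (or from a direct energy estimate on the difference of two solutions).

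Next I would treat general $\e>0$ by rescaling time: put $\sigma:=\e t$, $\tilde\rho(\sigma,x):=\rho(\sigma/\e,x)$ and $\tilde h(\sigma,x):=\e^{-1}h(\sigma/\e,x)$, so that $\tilde\rho$ solves $\partial_\sigma\tilde\rho-\Delta_x\tilde\rho=\tilde h$ on $(0,\e T)\times G$ with the same Neumann condition and the same initial datum $\rho_0$. Applying the $\e=1$ result on $(0,\e T)$ and undoing the substitution through the elementary identities
\[
\|\tilde h\|_{L^p(0,\e T;L^q)}=\e^{\frac1p-1}\|h\|_{L^p(0,T;L^q)},\qquad \|\partial_\sigma\tilde\rho\|_{L^p(0,\e T;L^q)}=\e^{\frac1p-1}\|\partial_t\rho\|_{L^p(0,T;L^q)},
\]
\[
\|\tilde\rho\|_{L^p(0,\e T;W^{2,q})}=\e^{\frac1p}\|\rho\|_{L^p(0,T;W^{2,q})},\qquad \|\tilde\rho\|_{L^\infty(0,\e T;W^{2-2/p,q})}=\|\rho\|_{L^\infty(0,T;W^{2-2/p,q})},
\]
and multiplying the resulting inequality through by $\e^{1-1/p}$, one arrives at exactly the asserted estimate; existence, uniqueness and the regularity class for $\e>0$ transfer through the same change of variables.

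There is no genuinely hard step once the $\e=1$ theorem is available; the point that needs attention is the bookkeeping of constants — one must check that no hidden dependence on $\e$ enters the constant through the rescaled time interval $(0,\e T)$. This is where the local-in-time nature of parabolic maximal regularity matters: the constant in the $\e=1$ estimate on $(0,S)\times G$ can be taken nondecreasing in $S$ and uniform over bounded ranges of $S$ — in particular over the range of $\e T$ relevant here, $\e$ being a (small) diffusion parameter — so that the final constant depends only on $p,q,G$, with $T$ regarded as fixed throughout. This is precisely how the statement is organized in Section 7.6.1 of \cite{N-book}, from which the lemma is quoted.
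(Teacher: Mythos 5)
Your proposal is correct, but note that the paper itself contains no proof of this lemma: it is quoted verbatim as a classical result from Section 7.6.1 of \cite{N-book}, so there is no in-paper argument to compare against. Your route --- maximal $L^p$--$L^q$ regularity for the Neumann heat equation at $\e=1$ (existence, uniqueness, $\partial_t\rho,\Delta_x\rho\in L^p(L^q)$, elliptic regularity to upgrade to $W^{2,q}$, and the trace embedding giving $C([0,T];W^{2-\frac2p,q})$), followed by the time rescaling $\sigma=\e t$, $\tilde\rho(\sigma,\cdot)=\rho(\sigma/\e,\cdot)$, $\tilde h=\e^{-1}h(\sigma/\e,\cdot)$ --- is exactly the standard way this scaled statement is obtained, and your bookkeeping of the norms ($\|\tilde h\|_{L^p(0,\e T;L^q)}=\e^{\frac1p-1}\|h\|_{L^p(0,T;L^q)}$, etc.) and the final multiplication by $\e^{1-\frac1p}$ reproduce the asserted inequality precisely. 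Your caveat about the constant is the right one to flag: since the Neumann semigroup is not exponentially stable, the lower-order terms ($\|\rho\|_{L^q}$ inside the $W^{2,q}$ norm and the $L^\infty$-in-time term) force the constant to depend on the length $\e T$ of the rescaled interval (take $h\equiv c$, $\rho_0=0$, $\rho=ct$ to see this), so ``$C(p,q,G)$'' is legitimate only with $T$ fixed and $\e$ ranging over a bounded set --- which is how the lemma is used in the paper. One further small point you inherit from the quoted statement rather than introduce yourself: for $p\neq q$ the optimal initial-data space for maximal regularity is the Besov space $B^{2-2/p}_{q,p}$ rather than the Sobolev--Slobodeckij space $W^{2-2/p,q}=B^{2-2/p}_{q,q}$, so identifying the latter as ``the associated trace space'' is a slight imprecision; this does not affect how the lemma is applied in the paper (where one has ample regularity of the data), but a fully self-contained proof would either restrict to the embedding-compatible case or phrase the initial-data hypothesis in the Besov scale.
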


\smallskip

The second result concerns parabolic problems with a divergence-form source term
$h=\Div_x \bf g$.

\smallskip

\begin{lemma}\label{lem-parabolic-2}
Let $0<\beta < 1, \ 1<p,q<\infty$, $G\subset \R^d$ be a bounded $C^{2,\beta}$ domain with $\beta\in (0,1)$,
$$
\rho_0 \in L^{q}(G),\quad {\bf g} \in L^p(0,T;L^q(G;\R^d)).
$$
Then, there exists a unique solution $\rho\in L^p(0,T;W^{1,q}(G)) \cap C([0,T];L^{q}(G)) $ satisfying
$$
\vr(0,\cdot) = \vr_0(\cdot) \ a.e. \  \mbox{in $G$}, \quad \frac{\rm d}{\dt} \int_G \rho\, \phi\,\dx + \e \int_G \nabla_x\rho \cdot \nabla_x \phi \,\dx = - \int_G  {\bf g} \cdot \nabla_x \phi \,\dx \quad \mbox{in}\ {\mathcal D}'(0,T),
$$
for any $\phi \in C^\infty(\overline G)$. Moreover,
\begin{align*}
\e^{1-\frac{1}{p}}\|\rho\|_{L^\infty(0,T;L^q(G))}
+ \e \|\nabla_x \rho\|_{L^p(0,T;L^q(G;\R^d))} \leq C(p,q,G)
\big[\e^{1-\frac{1}{p}}\|\rho_0\|_{L^q(G)}
+ \|{\bf g}\|_{L^p(0,T;L^q(G;\R^d))}\big].
\end{align*}
\end{lemma}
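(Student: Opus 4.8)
\emph{Step 1 (rescaling).} The bound is a (non-sharp) maximal $L^p$--regularity estimate for the Neumann heat equation with a divergence-form source, and I would organise the proof around two reductions: first removing the diffusivity $\e$, then representing the solution through the Neumann heat semigroup. Put $\sigma(\tau,x):=\rho(\tau/\e,x)$ for $\tau\in(0,\e T)$; a direct computation shows that $\sigma$ solves $\d_\tau\sigma-\Delta_x\sigma=\Div_x{\bf g}_\e$ with $\d_{\bf n}\sigma=0$ on the boundary and $\sigma(0,\cdot)=\rho_0$, where ${\bf g}_\e(\tau,\cdot):=\e^{-1}{\bf g}(\tau/\e,\cdot)$. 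Since $\|\sigma\|_{L^\infty((0,\e T);L^q)}=\|\rho\|_{L^\infty((0,T);L^q)}$, $\|\nabla_x\sigma\|_{L^p((0,\e T);L^q)}=\e^{1/p}\|\nabla_x\rho\|_{L^p((0,T);L^q)}$ and $\|{\bf g}_\e\|_{L^p((0,\e T);L^q)}=\e^{1/p-1}\|{\bf g}\|_{L^p((0,T);L^q)}$, the asserted inequality is equivalent, after multiplication by $\e^{1-1/p}$, to the corresponding inequality for $\e=1$ with a constant that does not depend on the length of the time interval. So it is enough to treat $\e=1$ on an arbitrary bounded interval $(0,T_0)$.

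\emph{Step 2 (uniqueness and representation).} Uniqueness follows by duality against Lemma~\ref{lem-parabolic-1}: if $\rho$ solves the weak formulation with $\rho_0=0$ and ${\bf g}=0$, then for any $h\in C^\infty_c((0,T_0)\times G)$ let $w$ solve the backward problem $-\d_t w-\Delta_x w=h$, $w(T_0,\cdot)=0$, $\d_{\bf n}w=0$; reversing time in Lemma~\ref{lem-parabolic-1} yields $w\in L^{p'}(0,T_0;W^{2,q'})$ with $\d_t w\in L^{p'}(0,T_0;L^{q'})$, so that testing the weak formulation for $\rho$ with $w$ and integrating by parts in $x$ (both $\rho$ and $w$ satisfy the Neumann condition) gives $\int_0^{T_0}\!\int_G\rho\, h=0$, whence $\rho\equiv 0$. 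For existence I would use the Neumann heat semigroup $S(t):=e^{t\Delta_N}$, which on the $C^{2,\beta}$ domain $G$ is a bounded analytic contraction semigroup on $L^q(G)$ for every $1<q<\infty$, with Gaussian upper bounds on its kernel and the gradient estimate $\|\nabla_x S(t)\|_{\mathcal{L}(L^q)}\le Ct^{-1/2}$; duality then gives $\|S(t)\Div_x\|_{\mathcal{L}(L^q)}\le Ct^{-1/2}$, so the Duhamel formula $\rho(t):=S(t)\rho_0+\int_0^t S(t-s)\Div_x{\bf g}(s)\,\mathrm{d}s$ defines an element of $L^q(G)$ for a.e.\ $t$, and one verifies that it solves the weak formulation.

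\emph{Step 3 (the estimate).} The inhomogeneous part of $\nabla_x\rho$ is the time-convolution of ${\bf g}$ with the operator-valued kernel $K(\tau):=\nabla_x S(\tau)\Div_x\in\mathcal{L}(L^q)$, which by analyticity of $S$ satisfies $\|K(\tau)\|_{\mathcal{L}(L^q)}\le C\tau^{-1}$ and $\|K'(\tau)\|_{\mathcal{L}(L^q)}\le C\tau^{-2}$. Granting the $L^2(0,\infty;L^q(G))$ boundedness of this convolution operator, the vector-valued Calder\'on--Zygmund theorem (Benedek--Calder\'on--Panzone) upgrades it to a bounded operator on $L^p(0,\infty;L^q(G))$ for all $1<p<\infty$, with norm independent of the interval; this bounds the inhomogeneous part by $C\|{\bf g}\|_{L^p((0,T_0);L^q)}$. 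The homogeneous part $\nabla_x S(\cdot)\rho_0$ is controlled by $C\|\rho_0\|_{L^q}$ using the gradient estimate, the exponential decay of $S(t)$ on the mean-zero subspace (the mean being conserved, $\int_G\rho(t)=\int_G\rho_0$, as seen by taking $\phi\equiv1$ in the weak formulation), and the interpolation description of the trace $\rho_0\mapsto\rho(0,\cdot)$. Finally, from $\d_t\rho=\Delta_x\rho+\Div_x{\bf g}\in L^p(0,T_0;W^{-1,q})$ together with $\rho\in L^p(0,T_0;W^{1,q})$ the standard trace theorem yields $\rho\in C([0,T_0];L^q(G))$ and $\|\rho\|_{L^\infty((0,T_0);L^q)}\le C(\|\rho_0\|_{L^q}+\|\nabla_x\rho\|_{L^p((0,T_0);L^q)}+\|{\bf g}\|_{L^p((0,T_0);L^q)})$. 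Undoing the rescaling of Step~1 gives the lemma.

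\emph{Main obstacle.} The heart of the matter is the $L^2(0,\infty;L^q)$ boundedness of the singular convolution by $K$, whose kernel has exactly the critical size $\tau^{-1}$; equivalently, this is the maximal $L^p$--regularity of $-\Delta_N$ on $L^q(G)$, which one obtains from the bounded $H^\infty$-functional calculus of $-\Delta_N$, a consequence of the Gaussian heat-kernel bounds valid on a $C^{2,\beta}$ domain. A more hands-on route, and presumably the one taken in \cite{N-book}, is to localise by a partition of unity, flatten the boundary, and reduce to the heat equation on $\R^d$ and on a half-space with Neumann data, where the solution kernels are explicit and the required bound comes from the Mikhlin--H\"ormander multiplier theorem; there remains the routine but lengthy task of absorbing the commutator and lower-order terms produced by the localisation.
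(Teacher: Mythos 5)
The paper itself contains no proof of this lemma --- it is quoted from Section 7.6.1 of \cite{N-book} --- so your proposal has to be judged on its own merits. Steps 1 and 2 are sound: the time rescaling correctly reduces the claim to $\e=1$ \emph{provided} the resulting constant is independent of the length of the time interval, and the duality argument against (a time-reversed) Lemma \ref{lem-parabolic-1}, after a routine approximation allowing time-dependent test functions, gives uniqueness. For the forcing term, reducing matters to the $L^p(0,\infty;L^q)$-boundedness of convolution with $K(\tau)=\nabla_x S(\tau)\Div_x$ via Benedek--Calder\'on--Panzone is a legitimate outline, although the crux --- the $L^2(0,\infty;L^q)$ bound, i.e.\ maximal regularity of the Neumann Laplacian combined with Riesz-transform bounds and the treatment of the zero eigenvalue --- is asserted rather than proved, as you acknowledge.

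The genuine gap is your treatment of the homogeneous part in Step 3. You claim $\|\nabla_x S(\cdot)\rho_0\|_{L^p(0,T;L^q)}\le C\|\rho_0\|_{L^q}$ ``using the gradient estimate \dots and the interpolation description of the trace''. The gradient estimate only yields $\|\nabla_x S(t)\rho_0\|_{L^q}\le Ct^{-1/2}\|\rho_0\|_{L^q}$, and $t^{-1/2}\notin L^p(0,1)$ once $p\ge 2$; moreover the trace space associated with $L^p(0,T;W^{1,q})\cap W^{1,p}(0,T;W^{-1,q})$ is $B^{1-2/p}_{q,p}$, which for $p>2$ has positive smoothness and does not contain $L^q$, so the interpolation argument you invoke cannot close the estimate. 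This is not a removable technicality: take $q=2$ and $\rho_0=\sum_j j^{-1}w_{k_j}$, a lacunary combination of $L^2$-normalized Neumann eigenfunctions with eigenvalues $\lambda_{k_j}\sim 4^j$; then $\rho_0\in L^2(G)$, while $\|\nabla_x S(t)\rho_0\|_{L^2}\gtrsim j^{-1}2^{j}$ for $t\sim 4^{-j}$, so $\nabla_x S(\cdot)\rho_0\notin L^p(0,T;L^2)$ for any $p>2$; since $S(t)\rho_0$ is (by your own uniqueness step) the only candidate solution with ${\bf g}=0$, no argument can produce the stated bound for $p>2$ from $\rho_0\in L^q$ alone. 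Your route does close for $p<2$, where $t^{-1/2}e^{-ct}\in L^p(0,\infty)$ on the mean-zero subspace also supplies the interval-independent constant needed after the rescaling of Step 1; the concluding $L^\infty(0,T;L^q)$ bound likewise does not follow from the trace theorem you cite and is usually obtained by testing with $|\rho|^{q-2}\rho$. Note that in the paper's applications (Propositions \ref{lem-eta-better} and \ref{lem-TT-better}) the initial data $\eta_0,\TT_0$ lie in $W^{2,2}(\O)$, so the homogeneous contribution is harmless there; but as a proof of the lemma as literally stated your Step 3 has a hole, and it in fact pinpoints a real delicacy of the quoted statement in the range $p\ge 2$.
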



\medskip

Finally, we recall the Bogovski{\u\i} operator, whose construction can be found in \cite{bog} and in Chapter III of Galdi's book \cite{Galdi-book}.
\begin{lemma}\label{lem-bog}
 Let $1<p<\infty$ and  $G\subset \R^d$ be a bounded  Lipschitz domain. Let $L^{p}_0(G)$ be the space of all $L^p(G)$ functions with zero mean value. Then, there exists a linear operator ${\mathcal B}_G$ from $L_0^p(G)$ to $W_0^{1,p}(G;\R^d)$ such that for any $\rho \in L_0^p(G)$ one has
$$
\Div_x \mathcal{B}_G (\rho) =\rho \quad \mbox{in} \ G; \quad \|\mathcal{B}_G (\rho)\|_{W_0^{1,p}(G;\R^d)} \leq c (p,d,G)\, \|\rho\|_{L^p(G)}.
$$

If, in addition, $\rho=\Div_x {\bf g}$ for some ${\bf g} \in L^{q}(G;\R^d), \  1<q<\infty, \  {\bf g} \cdot {\bf n}  = 0 $ on $\partial G$, then the following inequality holds:
$$
\|\mathcal{B}_G (\rho)\|_{L^{q}(G;\R^d)} \leq c (d, q,G)\, \| {\bf g} \|_{L^q(G;\R^d)}.
$$
\end{lemma}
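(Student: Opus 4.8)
The plan is to reproduce the classical Bogovski{\u\i} construction: one first treats a domain star-shaped with respect to a ball, where the operator is given by an explicit integral formula, and then passes to a general bounded Lipschitz domain by a finite localization. \textbf{Step 1 (star-shaped case).} Assume $G$ is star-shaped with respect to a ball $B\subset G$ and fix $\omega\in C_c^\infty(B)$ with $\int_B\omega=1$. For $\rho\in L^p_0(G)$ set
\[
\mathcal{B}_G(\rho)(x):=\int_G \rho(y)\left[\frac{x-y}{|x-y|^{d}}\int_{|x-y|}^{\infty}\omega\!\Big(y+r\,\frac{x-y}{|x-y|}\Big)\,r^{d-1}\,dr\right]dy .
\]
After the substitution $r=|x-y|\,t$ the bracket becomes $N(x,x-y)$ for a kernel $N(x,z)$ that is smooth for $z\neq0$, of size $|z|^{1-d}$ near $z=0$, with $x$-dependence entering only through $\omega$. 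Differentiating under the integral sign, and using $\int_G\rho=0$ to cancel the term $-\omega(x)\int_G\rho$ that would otherwise survive, gives $\Div_x\mathcal{B}_G(\rho)=\rho$ in $G$; the support of $\omega$ together with the star-shapedness of $G$ forces $\mathcal{B}_G(\rho)$ to extend by zero across $\partial G$, so once the regularity below is in place one has $\mathcal{B}_G(\rho)\in W^{1,p}_0(G;\R^d)$.

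\textbf{Step 2 (estimates, star-shaped case).} Differentiating $N(x,x-y)$ in $x$ produces a kernel with a borderline $|x-y|^{-d}$ singularity; one checks that $\nabla_x\mathcal{B}_G(\rho)$ equals a principal-value singular integral operator whose kernel satisfies the standard Calder\'on--Zygmund size and H\"ormander smoothness conditions — the $C_c^\infty$ regularity of $\omega$ makes the $x$-dependence harmless — plus a weakly singular, hence $L^p$-bounded, remainder. Calder\'on--Zygmund theory then gives $\|\nabla_x\mathcal{B}_G(\rho)\|_{L^p(G)}\le c(p,d,G)\|\rho\|_{L^p(G)}$ for every $1<p<\infty$, and, since $\mathcal{B}_G(\rho)$ is compactly supported in $\overline G$, Poincar\'e's inequality upgrades this to the full $W^{1,p}_0$ bound. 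For the divergence-form refinement, write $\rho=\Div_x{\bf g}$ with ${\bf g}\cdot{\bf n}=0$ on $\partial G$ and integrate by parts in $y$ in the defining integral (justified by approximation): the boundary term vanishes precisely because ${\bf g}\cdot{\bf n}=0$, and the $y$-derivative landing on $N(x,x-y)$ again produces a Calder\'on--Zygmund kernel, so $\mathcal{B}_G(\rho)(x)=\int_G K(x,x-y)\cdot{\bf g}(y)\,dy$ and therefore $\|\mathcal{B}_G(\rho)\|_{L^q(G)}\le c(d,q,G)\|{\bf g}\|_{L^q(G)}$.

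\textbf{Step 3 (localization).} For a general bounded Lipschitz $G$, cover $\overline G$ by finitely many open sets $U_1,\dots,U_N$ such that each $G_i:=G\cap U_i$ is star-shaped with respect to a ball and $G=\bigcup_i G_i$; Lipschitz regularity makes this possible, and the patches may be arranged to form a connected chain. Fix a partition of unity $\{\chi_i\}$ subordinate to $\{U_i\}$. Given $\rho\in L^p_0(G)$, decompose $\rho=\sum_{i=1}^N\rho_i$ with $\supp\rho_i\subset\overline{G_i}$ and $\int_{G_i}\rho_i=0$: starting from $\chi_i\rho$, whose integrals need not vanish, one successively transports the excess mass into the next overlapping patch via a fixed bump function supported in $G_i\cap G_{i+1}$, which is possible since $\sum_i\int_G\chi_i\rho=\int_G\rho=0$, and the resulting map $\rho\mapsto(\rho_i)$ is bounded on $L^p$. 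Setting $\mathcal{B}_G(\rho):=\sum_i E_i\mathcal{B}_{G_i}(\rho_i)$, with $E_i$ extension by zero, linearity is immediate and the $W^{1,p}_0$ estimate follows by summing Steps 1--2. When $\rho=\Div_x{\bf g}$ one additionally arranges $\rho_i=\Div_x{\bf g}_i$ with ${\bf g}_i\in L^q(G_i)$, ${\bf g}_i\cdot{\bf n}=0$ on $\partial G_i$ and $\|{\bf g}_i\|_{L^q}\le c\|{\bf g}\|_{L^q}$ — taking ${\bf g}_i=\chi_i{\bf g}$ and correcting on the overlaps by a lower-order Bogovski{\u\i}-type term to restore the normal trace — after which Step 2 yields the $L^q$ bound term by term.

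\textbf{Main obstacle.} The analytic core, Step 2, is routine once the kernel is identified, and the divergence identity in Step 1 is a direct differentiation. The genuinely delicate point is Step 3: splitting a mean-zero $\rho$ into mean-zero pieces supported in the star-shaped patches \emph{and} doing so compatibly with the divergence structure (keeping ${\bf g}_i\cdot{\bf n}=0$ and $\|{\bf g}_i\|_{L^q}\lesssim\|{\bf g}\|_{L^q}$), so that the refined $L^q$ estimate survives localization. Since the entire construction is classical, we only indicate its architecture here; complete details are given in \cite{bog} and in Chapter III of \cite{Galdi-book}.
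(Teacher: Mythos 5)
Your sketch is correct and follows exactly the classical Bogovski{\u\i} construction (explicit kernel on star-shaped domains, Calder\'on--Zygmund estimates, decomposition of a Lipschitz domain into star-shaped patches) that the paper itself relies on: the paper gives no proof of Lemma \ref{lem-bog} and simply cites \cite{bog} and Chapter III of \cite{Galdi-book}, the same sources to which you defer the technical details. So your approach is essentially the same as the paper's, and no further comparison is needed.
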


\section{A refined blow-up criterion}\label{sec-thm-criterion}

This section is devoted to the proof of Theorem \ref{thm-criterion}. Let $d=2$ and $(\vr,\vu, \eta, \TT)$ be the strong solution given in Theorem \ref{thm-strong} with $T_*$ the maximal existence time. By contradiction argument, to prove Theorem \ref{thm-criterion}, it is sufficient to show that, if
\be\label{ass-criterion-bd}
T_*<\infty, \quad \limsup_{T\to T_*} \left( \| \vr \|_{L^\infty((0,T)\times \O )} \right) <\infty,
\ee
there holds the following uniform estimates in $\eta$ and $\TT$:
\be\label{criterion-bd}
\limsup_{T\to T_*} \left(  \| \eta \|_{L^\infty((0,T)\times \O )} +  \| \TT \|_{L^2((0,T),L^\infty(\O;\R^{2\times 2}))}   \right) <\infty.
\ee

\medskip

Let $T_1\in (0,T_*)$  be close to $T_*$ and be determined later on. By Theorem \ref{thm-strong}, there holds
\ba\label{est-strong-T1}
\|\vr\|_{L^\infty(0,T_1; W^{1,6}(\O))}   + \|(\vu, \eta, \TT)\|_{L^\infty(0,T_1; W^{2,2}(\O;\R^2 \times \R\times \R^{2\times 2}))} \leq C(T_1,T_*)<\infty.
\ea
Remark that the constant $C(T_1,T_*)$ may be unbounded as $T_1 \to T_*.$ 

By Sobolev embedding inequality $\|\vr\|_{L^\infty(\O)} \leq C \|\vr\|_{W^{1,6}(\O)}$ and estimates \eqref{est-strong-T1}, the assumption \eqref{ass-criterion-bd} is equivalent to the following assumption
\be\label{ass-criterion-bd2}
\| \vr \|_{L^\infty((0,T_*)\times \O )} <\infty.
\ee
Now, starting from \eqref{ass-criterion-bd2}, we show our desired estimate \eqref{criterion-bd} step by step in the rest of this section. Some ideas are based on the methods in \cite{Hoff95, SZ11}, while quite a few technical difficulties coming from the terms in $\eta, \ \TT$ need to be handled.

In the sequel, to avoid notation complicity, we sometimes simply use $L^r(0,T;X(\O))$ to denote scalar function spaces $L^r(0,T;X(\O))$, vector valued function spaces $L^r(0,T;X(\O;\R^n))$ or matrix valued function spaces $L^r(0,T;X(\O;\R^{n\times n}))$ if there is no confusion. In the rest of this section, the constant $C$ depends on only the initial data and  the quantity $\| \vr \|_{L^\infty((0,T_*)\times \O )}$ which is assumed to be bounded by contradiction.

\subsection{A priori estimates}\label{sec-a-priori}

In this section and next section (Section  \ref{sec-higher-eta}), we give some estimates that are uniform over time interval $(0,T_*)$; in particular, these estimates hold without assuming the condition \eqref{ass-criterion-bd2}.

We first briefly recall the a priori energy estimates and we refer to Section 3 in \cite{Barrett-Lu-Suli} for the details of the derivation for a slightly modified model.

For any time $t\in (0,T_*)$, calculating
$$
\int_0^t \left(\int_\O \eqref{02a} \cdot \vu \, \dx +  \frac{1}{2}\int_\O \tr \eqref{04a}\,\dx \right)\dt'
$$
implies the energy inequality \eqref{energy1-f}. In fact, here an energy equality is obtained due to the smoothness of the solution. Then applying Gronwall's inequality gives the following inclusions
\ba\label{a-priori-3}
&\vr\in L^\infty(0,T_*; L^\g(\O)), \quad \vu \in L^2(0,T_*; W^{1,2}_0(\O;\R^2)), \quad \vr|\vu|^2 \in L^\infty(0,T_*; L^1(\O)),\\
&\left\{\begin{aligned}&\eta \in L^\infty (0,T_*;  L^2(\Omega)) \cap L^2 (0,T_*;  W^{1,2}(\Omega)), \quad \mbox{if}\ \de >0,\\
&\eta \log \eta \in L^\infty(0,T_*;L^1(\O)), \ \eta^{\frac{1}{2}} \in L^2(0,T_*; W^{1,2}(\O)), \quad \mbox{if} \ \de=0,
\end{aligned}\right. \\
& \tr (\TT) \in L^\infty(0,T_*; L^1(\O)).
\ea

We then take the inner product of \eqref{04a} with $\TT$ and integrate over $\O$. Direct calculation implies
\begin{align*}
&\frac{1}{2}\frac{\rm d}{\dt} \int_\O  | \TT|^2 \,\dx + \e \int_\O |\nabla_x \TT|^2 \,\dx + \frac{1}{2\l}\int_\O | \TT|^2 \,\dx \\
&\quad= - \int_\O {\rm Div}_x (\vu\,\TT) : \TT \,\dx + \int_\O \left(\nabla_x \vu \,\TT + \TT\, \nabla_x^{\rm T} \vu \right) : \TT \,\dx + \frac{k}{2\lambda}\int_\O \eta \,\tr\left(\TT\right) \,\dx \\
&\quad \leq  3\, \| \nabla_x \vu\|_{L^2(\O;\R^{2\times 2})}\, \|\TT\|_{L^4(\O;\R^{2\times 2})}^2   + \frac{1}{4\l} \int_\O |\TT|^2 \,\dx +
\frac{k^2}{2\l} \int_\O \eta^2 \,\dx,
\qquad  \mbox{$\forall \, t \in (0,T_*)$}.
\end{align*}

Applying the Gagliardo--Nirenberg inequality recalled in Lemma \ref{lem-GN} in the case of $d=2$, we have, over time interval $ (0,T_*)$, that
\ba\label{sobolev-R2}
\|\TT\|_{L^4(\O)}^2 &\leq C\,\|\TT\|_{L^2(\O)} \| \TT\|_{W^{1,2}(\O)} \leq C\,\|\TT\|_{L^2(\O)}  \left(\|\TT\|_{L^2(\O)}+ \| \nabla_x\TT\|_{L^{2}(\O)}\right).\nn
\ea
This implies
\ba\label{sobolev-R2-a}
3\, \| \nabla_x \vu\|_{L^2(\O)}\, \|\TT\|_{L^4(\O )}^2 \leq C\,\| \nabla_x \vu\|_{L^2(\O )}^2 \|\TT\|_{L^2(\O )}^2 + \frac{1}{8\l}\|\TT\|_{L^2(\O )}^2+ \frac{\e}{2}  \|\nabla_x \TT\|_{L^2(\O)}^2.\nn
\ea
We thus obtain for any $t\in (0,T_*)$:
\ba\label{a-priori-tau-f3}
&\int_\O  | \TT|^2(t,\cdot) \,\dx + \e \int_0^t\int_\O |\nabla_x \TT|^2 \,\dx \,\dt' + \frac{1}{4\l} \int_0^t \int_\O | \TT|^2 \,\dx\, \dt'   \\ &\leq  \int_\O  | \TT_0|^2 \,\dx +  C\, \int_0^t \| \nabla_x \vu\|_{L^2(\O)}^2 \|\TT\|_{L^2(\O)}^2 \dt' + \frac{k^2}{\l}\int_0^t \int_\O \eta^2 \,\dx\,\dt'.
\ea

If $\de>0$, we have from the \eqref{a-priori-3} that $\|\eta\|_{L^\infty(0,T_*;L^2(\O))}\leq C$.
We thus deduce from \eqref{a-priori-tau-f3} by using Gronwall's inequality  that
\ba\label{a-priori-tau-f4}
&\|\TT(t,\cdot)\|_{L^2(\O )}^2 +  \e  \int_0^t \int_\O |\nabla_x \TT|^2  \,\dx\,\dt'
\leq C(E_0, \|\TT_0\|_{L^2(\O )}), \qquad  \mbox{$\forall \, t \in (0,T_*)$}.
\ea

\medskip

We now consider the case $\de=0$, $L>0$. By \eqref{a-priori-3}, there holds
\ba\label{est-tau-de=0-1}
\| \eta  \log \eta \|_{L^\infty(0,T_*;L^1(\O))} + \| \nabla_x \eta ^{\frac{1}{2}} \|_{L^2(0,T_*;L^{2}(\O))}  \leq C.
\ea
Then by \eqref{est-tau-de=0-1}, we have
\[
\int_\O |\nabla_x \eta| \,\dx = \int_\O |2 \eta^{\frac{1}{2}}\nabla_x \eta^{\frac{1}{2}}| \,\dx \leq 2\, \| \eta^{\frac{1}{2}} \|_{L^2(\O)} \|\nabla_x \eta^{\frac{1}{2}} \|_{L^2(\O)} \leq C.
\]
%
As $d=2$, the Sobolev embedding of $W^{1,1}(\O)$ into $L^2(\Omega)$ then gives that
\[
\| \eta \|_{L^2(0,T_*;L^{2}(\O))}   \leq C.
\]
Then by \eqref{a-priori-tau-f3} and Gronwall's inequality, we obtain the same estimate as \eqref{a-priori-tau-f4}.

Hence, there holds the uniform estimates
\ba\label{a-priori-4}
\|\TT\|_{L^\infty (0,T_*;  L^2(\Omega;\R^{2\times 2}))} + \|\TT\|_{L^2 (0,T_*;  W^{1,2}(\Omega;\R^{2\times 2}))}\leq C.
\ea

\subsection{Higher order estimates for $\eta,\ \TT$}\label{sec-higher-eta}

Based on interpolations and repeat applications of Lemma \ref{lem-parabolic-2}, we can show the following higher order estimates for the polymer number density.

\begin{proposition}\label{lem-eta-better}
For any $r\in (1,\infty)$, there holds
\be\label{est-eta-higher}
\|\eta\|_{L^{\infty}(0,T_*; L^r(\O))}+ \|\eta\|_{L^{2}(0,T_*; W^{1,r}(\O))}\leq C.
\ee
\end{proposition}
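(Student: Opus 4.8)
The plan is to treat the polymer-density equation \eqref{03a}, together with its Neumann condition \eqref{06a}, as the linear parabolic problem
\[
\d_t\eta-\e\,\Delta_x\eta=\Div_x\!\big(-\eta\vu\big)\ \text{ in }(0,T_*)\times\O,\qquad \d_{\bf n}\eta=0\ \text{ on }\d\O,\qquad \eta(0,\cdot)=\eta_0,
\]
and to iterate the maximal-regularity estimate of Lemma \ref{lem-parabolic-2} with divergence datum ${\bf g}=-\eta\vu$. Three facts are available with no further work: \eqref{a-priori-3} gives $\vu\in L^2(0,T_*;W^{1,2}_0(\O))$, hence, by the two-dimensional Gagliardo--Nirenberg inequality (Lemma \ref{lem-GN}), $\vu\in L^2(0,T_*;L^s(\O))$ for every $s<\infty$ with norm bounded uniformly for $T<T_*$; under the hypotheses of Theorem \ref{thm-strong} one has $\eta_0\in W^{2,2}(\O)\hookrightarrow L^\infty(\O)$, so $\|\eta_0\|_{L^q(\O)}<\infty$ for all $q<\infty$ and the initial datum never obstructs the iteration; and from \eqref{a-priori-3} one has $\eta\in L^\infty(0,T_*;L^1(\O))$ in all cases, $\eta\in L^\infty(0,T_*;L^2(\O))$ when $\de>0$, and $\eta^{1/2}\in L^\infty(0,T_*;L^2(\O))\cap L^2(0,T_*;W^{1,2}(\O))$ when $\de=0$ (which, again by Lemma \ref{lem-GN}, puts $\eta$ in $L^p(0,T_*;L^q(\O))$ for every $(p,q)$ with $\tfrac1p+\tfrac1q=1$).

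\emph{Step 1: prove $\eta\in L^\infty(0,T_*;L^r(\O))$ for every $r<\infty$.} I would run a finite bootstrap on the exponent $a$ in $\eta\in L^\infty(0,T_*;L^a(\O))$. A starting value $a>1$ is in hand: $a=2$ if $\de>0$, and if $\de=0$ I would first use the family $\tfrac1p+\tfrac1q=1$ above (with $p>2$, hence $q<2$) to put ${\bf g}=-\eta\vu$ in $L^{a'}_tL^b_x$ with $a'>1$, so that Lemma \ref{lem-parabolic-2} already delivers some $a\in(1,2)$. For the inductive step, starting from $\eta\in L^\infty_tL^a_x$ with $a>1$: H\"older gives ${\bf g}=-\eta\vu\in L^2_tL^b_x$ for every $b<a$; Lemma \ref{lem-parabolic-2} then yields $\eta\in L^\infty_tL^b_x\cap L^2_tW^{1,b}_x$; the two-dimensional Sobolev embedding of $W^{1,b}$ (into $L^{2b/(2-b)}$ if $b<2$, into $L^\infty$ once $b>2$) improves the spatial integrability in the $L^2_t$ slot; interpolating in $(t,x)$ and applying Lemma \ref{lem-parabolic-2} once more returns $\eta\in L^\infty_tL^{a_+}_x$ with $a_+$ larger than $a$ by a definite amount (concretely $1/a$ drops by a fixed quantity, and $a$ simply doubles once $a>2$). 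Hence finitely many rounds reach any prescribed $r<\infty$. (I expect a cleaner route for this step, uniform in $\de\ge0$: test \eqref{03a} against $\eta^{r-1}$ --- licit because the strong solution is bounded on $[0,T_1]\times\O$ for every $T_1<T_*$ --- to get $\tfrac{d}{dt}\|\eta\|_{L^r}^r+c_{\e,r}\|\nabla_x\eta^{r/2}\|_{L^2}^2\le C_r\bigl(1+\|\nabla_x\vu\|_{L^2}^2\bigr)\|\eta\|_{L^r}^r$ after a two-dimensional Ladyzhenskaya estimate and Young's inequality, then close by Gronwall using only $\vu\in L^2_tW^{1,2}_x$ and $\eta_0\in L^r$.)

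\emph{Step 2: prove $\eta\in L^2(0,T_*;W^{1,r}(\O))$ for every $r<\infty$.} Once Step 1 is done, $\eta\in L^\infty(0,T_*;L^m(\O))$ for all $m<\infty$, so H\"older (with $\eta\in L^\infty_tL^{2r}_x$ and $\vu\in L^2_tL^{2r}_x$) gives ${\bf g}=-\eta\vu\in L^2(0,T_*;L^r(\O))$ for every $r<\infty$; one final application of Lemma \ref{lem-parabolic-2}, with $p=2$ and $q=r$, bounds $\nabla_x\eta$ in $L^2(0,T_*;L^r(\O))$, and together with Step 1 this is exactly \eqref{est-eta-higher}. All constants along the way depend only on $r,\e,\O,T_*$ and the fixed initial data, not on $T<T_*$, which is what the blow-up argument of this section needs.

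\emph{Main obstacle.} The difficulty is that in this section no upper bound on $\vr$ is assumed, so $\vu$ is controlled only in the energy space $L^2(0,T_*;W^{1,2}_0(\O))$: there is no bound for $\vu$ in any $L^p_tL^q_x$ with $p>2$. Consequently the drift $\eta\vu$ never gains time-integrability past $L^2_t$, and it is strictly worse as soon as $\eta$ carries a finite time exponent, so a single maximal-regularity estimate cannot close the argument --- the splitting ``full spatial integrability first, gradient afterwards'' is essential. The secondary nuisance is the borderline case $\de=0$: the only starting information is $\eta\log\eta\in L^\infty_tL^1_x$ and $\eta^{1/2}\in L^2_tW^{1,2}_x$, so an extra preliminary step is needed to reach $\eta\in L^\infty_tL^a_x$ with $a>1$ before the main iteration of Step 1 can be launched.
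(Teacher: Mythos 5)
Your proposal is correct and is essentially the paper's own argument: both treat \eqref{03a} as a Neumann heat problem with divergence-form source $-\eta\vu$, bootstrap the integrability of $\eta$ by repeated applications of Lemma \ref{lem-parabolic-2} combined with Sobolev/Gagliardo--Nirenberg embeddings and space-time interpolation (using only $\vu\in L^2(0,T_*;W^{1,2}_0)$, the energy bounds on $\eta$, and $\eta_0\in W^{2,2}\subset L^\infty$), and conclude with one final application at $p=2$, $q=r$ to obtain the gradient bound in \eqref{est-eta-higher}. The only difference is bookkeeping: the paper runs a fixed finite iteration starting from $\eta\in L^\infty(0,T_*;L^1)\cap L^2(0,T_*;L^2)$, which it has already established for both $\de>0$ and $\de=0$ in Section \ref{sec-a-priori}, whereas you run a general induction on the exponent with a separate preliminary step in the case $\de=0$.
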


\begin{proof} The proof is of the same sprit as for Proposition 12.2 in \cite{Barrett-Lu-Suli}.
We have shown in Section \ref{sec-a-priori} that $\vu \in L^2(0,T_*;W^{1,2}(\O,\R^2))$
and $\eta \in L^\infty(0,T_*;L^1(\O)) \cap L^2(0,T_*;L^2(\O))$. By Sobolev embedding $W^{1,2}(\O)\hookrightarrow L^{\frac{1}{\delta}}(\O)$, for any $\delta\in (0,1)$, we have that
\be\label{est-eta-u-de1}
 \eta  \vu \in L^{2-\frac{\delta}{2}}(0,T_*; L^1(\O;\R^2)) \cap  L^1(0,T_*; L^{2-\frac{\delta}{2}}(\O;\R^2))\hookrightarrow L^{1+c(\delta)}(0,T_*; L^{2-\delta}(\O;\R^2)),\nn
\ee
for any $\delta\in (0,1)$ and some $c(\delta)>0$. By observing $\eta_0\in W^{2,2}(\O) \subset L^\infty(\O)$,  we can apply Lemma \ref{lem-parabolic-2} to deduce that
$$
\eta \in L^{\infty}(0,T_*; L^{2-\delta}(\O)) \cap   L^{1+c(\delta)}(0,T_*; W^{1,2-\delta}(\O)),\quad \mbox{for any $\delta\in (0,1)$ and some $c(\delta)>0$}.
$$
This implies furthermore that
$$
\eta \vu \in L^{2-\delta}(0,T_*; L^2(\O;\R^2)) \cap  L^2(0,T_*; L^{2-\delta}(\O;\R^2)), \quad \mbox{for any $\delta\in (0,1)$}.
$$
Applying Lemma \ref{lem-parabolic-2} once more gives
\be\label{est-eta-para2}
\eta \in L^{\infty}(0,T_*; L^2(\O)) \cap   L^{2-\delta}(0,T_*; W^{1,2}(\O)) \cap  L^2(0,T_*; W^{1,2-\delta}(\O)),\quad \mbox{for any $\delta\in (0,1)$}.\nn
\ee
This implies
$$
\eta \vu \in   L^{2}(0,T_*; L^{2-\delta}(\O;\R^2)) \cap  L^{1+c(\delta)}(0,T_*; L^{\frac{1}{\delta}}(\O;\R^2)), \,\mbox{for any $\delta\in (0,1)$ and some $c(\delta)>0$}.
$$
Again by Lemma \ref{lem-parabolic-2} we deduce
$$
\eta \in L^{\infty}(0,T_*; L^{r}(\O)) \cap   L^{1+c(r)}(0,T_*; W^{1,r}(\O)),\quad \mbox{for any $r\in (1,\infty)$ and some $c(r)>0$}.
$$
Again by the bound on $\vu$ and Sobolev embedding we have
$$
\eta\vu \in L^{2}(0,T_*; L^{r}(\O;\R^2)), \quad \mbox{for any $r\in (1,\infty)$}.
$$
 Finally, one more application of Lemma \ref{lem-parabolic-2} implies  \eqref{est-eta-higher}.
\end{proof}

Similarly, for the extra stress tensor, we have the following improved estimates:
\begin{proposition}\label{lem-TT-better}
For any $r\in (1,\infty)$, there holds
\be\label{est-TT-higher}
\| \TT \|_{L^{\infty}(0,T_*; L^r(\O))}+ \| \TT \|_{L^{2}(0,T_*; W^{1,r}(\O))}\leq C.
\ee
\end{proposition}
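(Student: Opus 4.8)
The plan is to treat \eqref{04a} as the parabolic Neumann problem
$$
\d_t \TT - \e\,\Delta_x \TT = -\,{\rm Div}_x(\vu\,\TT) + \big(\nabla_x\vu\,\TT + \TT\,\nabla_x^{\rm T}\vu\big) + \frac{k}{2\lambda}\,\eta\,\II - \frac{1}{2\lambda}\,\TT,\qquad \d_{\bf n}\TT|_{\d\O}=0,
$$
with datum $\TT_0\in W^{2,2}_{\bf n}(\O;\R^{2\times 2})$, and to run the very same bootstrap as in the proof of Proposition \ref{lem-eta-better}: the transport term ${\rm Div}_x(\vu\,\TT)$ is in divergence form with flux $\vu\,\TT$ and is handled by Lemma \ref{lem-parabolic-2} exactly as the term $\Div_x(\eta\,\vu)$ was, while the remaining right-hand side is fed into Lemma \ref{lem-parabolic-1}. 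By Proposition \ref{lem-eta-better} we already have $\eta\in L^\infty(0,T_*;L^r(\O))\cap L^2(0,T_*;W^{1,r}(\O))$ for every $r<\infty$, so $\frac{k}{2\lambda}\eta\,\II$ is a source of arbitrarily high integrability and causes no trouble, and the zeroth-order term $-\frac{1}{2\lambda}\TT$ is lower order and at each stage is absorbed into the source with whatever integrability of $\TT$ is currently available.

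First I would fix the starting point: by \eqref{a-priori-4}, $\TT\in L^\infty(0,T_*;L^2)\cap L^2(0,T_*;W^{1,2})$, which in $d=2$ self-improves through Lemma \ref{lem-GN} to $\TT\in L^p(0,T_*;L^s)$ along the parabolic scale $\tfrac1p+\tfrac1s=\tfrac12$ (in particular $\TT\in L^4(0,T_*;L^4)$), while from Section \ref{sec-a-priori} $\vu\in L^2(0,T_*;W^{1,2})\hookrightarrow L^2(0,T_*;L^m)$ for every $m<\infty$ and $\nabla_x\vu\in L^2(0,T_*;L^2)$. Then, at a generic stage where $\TT$ enjoys a certain space-time integrability, I would bound by H\"older the two genuinely nonlinear terms $\vu\,\TT$ and $\nabla_x\vu\,\TT$, showing each lies in some $L^{1+c}(0,T_*;L^{1+c'}(\O))$ with $c,c'>0$ (this is where $d=2$ enters, the room coming e.g.\ from $L^\infty L^2\cap L^2W^{1,2}\hookrightarrow L^4L^4$), apply Lemma \ref{lem-parabolic-2} to the first and Lemma \ref{lem-parabolic-1} to the non-divergence source, add the resulting contributions together with that of the smooth datum $\TT_0$ by linearity, and re-read off via Sobolev embedding and interpolation a strictly better integrability for $\TT$. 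Iterating finitely many times pushes the spatial exponent past any given $r<\infty$; once it exceeds $2$ we have $W^{1,r}(\O)\hookrightarrow L^\infty(\O)$, so the estimate simultaneously yields the $L^2(0,T_*;L^\infty)$ bound for $\TT$ required in \eqref{criterion-bd}. A final pass in which every source sits in $L^2(0,T_*;L^r(\O))$ for arbitrarily large $r$ — the stretching term being paired now only against $\TT\in L^\infty_t L^m_x$ — recovers the endpoint time exponent and gives \eqref{est-TT-higher} in the stated form.

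The hard part will be the stretching term $\nabla_x\vu\,\TT+\TT\,\nabla_x^{\rm T}\vu$, which has no counterpart in the $\eta$-equation: it carries a full gradient of $\vu$, and $\nabla_x\vu$ is controlled only in $L^2(0,T_*;L^2(\O))$, with no better time integrability. Hence it cannot be put in divergence form, and at each step of the bootstrap it is this term that caps the exponents one can reach. The crux is to verify that already at the first application of Lemma \ref{lem-parabolic-1} the product $\nabla_x\vu\,\TT$ lies in $L^p(0,T_*;L^q(\O))$ with $\tfrac1p+\tfrac1q<1$, so that a strictly positive gain is available, and then that these gains compound over finitely many iterations without the time exponent collapsing. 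Everything else — the transport term, the $\eta$-source, the lower-order term, and the Neumann boundary condition — is dealt with exactly as in the proof of Proposition \ref{lem-eta-better}.
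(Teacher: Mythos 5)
Your overall frame (element\-wise parabolic Neumann problem for $\TT_{i,j}$, bootstrap starting from \eqref{a-priori-3}--\eqref{a-priori-4}, transport flux $\vu\,\TT_{i,j}$ fed into Lemma \ref{lem-parabolic-2}) matches the paper, but the treatment of the stretching term is where the argument breaks, and you have in fact stated the fatal condition yourself. With $\nabla_x\vu$ controlled only in $L^2(0,T_*;L^2(\O))$ and $\TT$ at best in $L^\infty(0,T_*;L^m)\cap L^2(0,T_*;W^{1,2})$, every admissible placement of $\nabla_x\vu\,\TT$ in $L^p(0,T_*;L^q(\O))$ obeys $\frac1p+\frac1q\ge 1+\frac1m$, so the requirement $\frac1p+\frac1q<1$ that you single out as the crux is never satisfied, not even at the first pass. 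Feeding such a source into Lemma \ref{lem-parabolic-1} returns $\TT\in C([0,T_*];W^{2-\frac2p,q}(\O))$, and Sobolev embedding in $d=2$ puts this in $L^\infty(0,T_*;L^Q(\O))$ with $\frac1Q=\frac1q-\bigl(1-\frac1p\bigr)=\frac1p+\frac1q-1\ge\frac1m$; likewise the $L^p(0,T_*;W^{2,q})$ piece caps the gradient component at $L^2(0,T_*;W^{1,q^*})$ with $\frac1{q^*}=\frac1q-\frac12=\frac1m$ when $p=2$. So the loop is at best neutral: starting from $m=2$ you recover exactly $L^\infty L^2\cap L^2W^{1,2}$, and assuming $L^\infty L^m$ you can never output better than $L^\infty L^m$. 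The gains do not compound; the iteration stalls at $r=2$ and never reaches \eqref{est-TT-higher} nor the $L^2(0,T_*;L^\infty)$ bound. The obstruction is structural: for this source the non\-divergence parabolic estimate is exactly parabolic\-scaling critical, because $\nabla_x\vu\in L^2L^2$ sits on the critical line and nothing in the bootstrap ever improves it.

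The paper's proof avoids Lemma \ref{lem-parabolic-1} here altogether: the stretching term, the $\frac{k}{2\lambda}\eta\,\II$ source and the damping term are rewritten as $\Div_x H_{i,j}$ by means of the Bogovski{\u\i} operator (Lemma \ref{lem-bog}), so that $H_{i,j}\in L^{1+c(\delta)}(0,T_*;W^{1,2-\delta}_0(\O;\R^2))\hookrightarrow L^{1+c(\delta)}(0,T_*;L^{1/\delta}(\O))$, and then the whole right\-hand side is a single divergence\-form flux to which Lemma \ref{lem-parabolic-2} applies. The point is that the $L^\infty_tL^q_x$ conclusion of Lemma \ref{lem-parabolic-2} retains the full spatial integrability $q$ of the flux even when the time exponent is only $1+c(\delta)$, whereas the corresponding conclusion of Lemma \ref{lem-parabolic-1}, namely $C([0,T];W^{2-2/p,q})$, degenerates to $L^q$ as $p\downarrow1$; combined with the derivative gained by Bogovski{\u\i} (hence, since $2-\delta$ is close to the critical exponent $2$ in two dimensions, an arbitrarily large Sobolev exponent), this yields $\TT\in L^\infty(0,T_*;L^{1/\delta})$ already after one pass, and a second pass with time exponent $2$ gives \eqref{est-TT-higher}. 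To repair your argument you need this divergence\-form rewriting of the non\-transport sources; the remaining ingredients of your outline (splitting off the transport flux, invoking Proposition \ref{lem-eta-better} for the $\eta$-source, the final Sobolev embedding) are consistent with the paper.
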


\begin{proof} We rewrite the equation in $\TT$ into its each element $T_{i,j}, \ 1\leq i,j \leq 2$:
\be\label{eq-TT-ij}
\d_t \TT_{i,j}  - \e \Delta_x \TT_{i,j} = - \Div_x (\vu\,\TT_{i,j}) + \left(\nabla_x \vu \,\TT + \TT\, \nabla_x^{\rm T} \vu \right)_{i,j} + \frac{k}{2\lambda}\eta \delta_{i,j} - \frac{1}{2\lambda} \TT_{i,j}.
\ee
By \eqref{a-priori-3} and \eqref{a-priori-4}, Sobolev embedding theorem gives
\be\label{est-TT-higher1}
\vu \in L^2(0,T_*; L^{\frac{1}{\delta}}(\O)), \quad \TT \in L^{2+c(\delta)}(0,T_*; L^{\frac{1}{\delta}}(\O)),\quad \mbox{for any $\delta\in (0,1)$ and some $c(\delta)>0$}.
\ee
Thus,
\be\label{est-TT-higher2}
\vu\,\TT_{i,j} \in L^{1+c(\delta)}(0,T_*; L^{\frac{1}{\delta}}(\O)),\quad \mbox{for any $\delta\in (0,1)$ and some $c(\delta)>0$}.
\ee
Similarly, we have
\be\label{est-TT-higher3}
\left(\nabla_x \vu \,\TT + \TT\, \nabla_x^{\rm T} \vu \right)_{i,j} \in L^{1+c(\delta)}(0,T_*; L^{2-\delta}(\O)),\quad \mbox{for any $\delta\in (0,1)$ and some $c(\delta)>0$}.\nn
\ee
By applying the Bogovski{\u \i } operator (Lemma \ref{lem-bog}), there exists $H_{i,j} \in L^{1+c(\delta)}(0,T_*; W_0^{1,2-\delta}(\O;\R^2))$ such that
\ba\label{est-Hij}
\Div_x H_{i,j} =  \left(\nabla_x \vu \,\TT + \TT\, \nabla_x^{\rm T} \vu \right)_{i,j} + \frac{k}{2\lambda}\eta \delta_{i,j} - \frac{1}{2\lambda} \TT_{i,j}.\nn
\ea
By Sobolev embedding theorem, there holds
\ba\label{est-Hij1}
 \|H_{i,j}\|_{L^{1+c(\delta)}(0,T_*; L^{\frac{1}{\delta}}(\O))} \leq C, \ \mbox{for any $\delta\in (0,1)$ and some $c(\delta)>0$}.
\ea

Thus, we can write \eqref{eq-TT-ij} as
\be\label{eq-TT-ij1}
\d_t \TT_{i,j}  - \e \Delta_x \TT_{i,j} = \Div_x (-\vu\,\TT_{i,j} + H_{i,j}).\nn
\ee
This allows us to apply Lemma \ref{lem-parabolic-2}, together with the estimates in \eqref{est-TT-higher2} and \eqref{est-Hij1} and $\TT_0 \in W^{2,2}(\O;\R^{2\times 2}) \subset L^\infty(\O;\R^{2\times 2})$, to obtain
\be\label{est-TT-higher-f1}
\| \TT_{i,j} \|_{L^{\infty}(0,T_*; L^\frac{1}{\delta}(\O))}+ \| \TT_{i,j} \|_{L^{1+ c(\delta)}(0,T_*; W^{1,\frac{1}{\delta}}(\O))}\leq C, \ \mbox{for any $\delta\in (0,1)$ and some $c(\delta)>0$}.
\ee

Again by \eqref{est-TT-higher1} and \eqref{est-TT-higher-f1}, we deduce
\ba\label{est-TT-higher4}
\vu\,\TT_{i,j} \in L^{2}(0,T_*; L^{\frac{1}{\delta}}(\O)),\quad \left(\nabla_x \vu \,\TT + \TT\, \nabla_x^{\rm T} \vu \right)_{i,j} \in L^{2}(0,T_*; L^{2-\delta}(\O)),\quad \mbox{for any $\delta\in (0,1)$}.\nn
\ea
Using the Bogovski{\u \i } operator and Lemma \ref{lem-parabolic-2}, we deduce our desired estimate \eqref{est-TT-higher}.
\end{proof}

\begin{remark}\label{rem-higher-eta}

From the proof, we see that Propositions \ref{lem-eta-better} and \ref{lem-TT-better} hold true for the case $\de=0, \ L>0$.
We remark that the estimates shown in Sections \ref{sec-a-priori} and \ref{sec-higher-eta} depend only on the initial data; more precisely, they only depend on the norms given in \eqref{ini-data-f} and the norm $\|(\eta_0, \TT_0)\|_{L^\infty(\O)}$; particularly the estimates are independent of $\|\vr\|_{L^\infty((0,T_*)\times \O)}$.

\end{remark}

\subsection{Estimates for $\vr |u|^\alpha$ with $\a>2$}
The goal of this is section is to prove, on assuming \eqref{ass-criterion-bd2}, that
\be\label{est-vru-alpha}
\|\vr |\vu|^\a\|_{L^\infty(0,T_*; L^1(\O))} \leq C<\infty, \ \mbox{for some $\a>2$}.
\ee

\medskip

As the derivation of a priori energy estimates in Section \ref{sec-a-priori}, the idea is to multiply \eqref{02a} by $\a |\vu|^{\a-2} \vu$ for some $\a>2$ and integrate in $\O$. By the fact $\d |\vu|^\a = \a |\vu|^{\a-2} \vu \cdot \d \vu$ and integral by parts, one can obtain
$$
\int_\O \d_t(\vr\vu) \cdot \a |\vu|^{\a-2} \vu \,\dx + \int_\O \Div_x(\vr\vu\otimes \vu) \cdot \a |\vu|^{\a-2} \vu \,\dx = \frac{\rm d}{\dt}\int_\O \vr |\vu|^{\a} \,\dx.
$$
Repeatedly using $\d |\vu|^\a = \a |\vu|^{\a-2} \vu \cdot \d \vu$ and integral by parts, through direct calculation, implies
\ba\label{est-vru-alpha-1}
&\frac{\rm d}{\dt}\int_\O \vr |\vu|^{\a} \,\dx + \int_\O \a |\vu|^{\a-2}\left( \mu |\nabla_x \vu|^2 + \nu |\Div_x \vu|^2 \right)\,\dx \\
&\quad + \a (\a-2) \int_\O  \left( \mu |\vu|^{\a-2} \big|\nabla_x |\vu|\big|^2 + \nu (\Div_x \vu) |\vu|^{\a-3} \vu \cdot (\nabla_x |\vu|) \right)\,\dx\\
&=\a \int_\O p(\vr) \Div_x(|\vu|^{\a-2} \vu)  \,\dx + \a \int_\O (kL\eta + \de\, \eta^2) \Div_x(|\vu|^{\a-2} \vu)  \,\dx \\
&\quad - \a \int_\O \TT : \nabla_x(|\vu|^{\a-2} \vu)  \,\dx + \a \int_\O \vr \ff \cdot |\vu|^{\a-2} \vu  \,\dx.
\ea

Observing $|\nabla_x |\vu||=||\vu|^{-1} \vu \cdot \nabla_x \vu|\leq |\nabla_x \vu|$, and taking $2<\a\leq 3$ close to $2$ such that $(\a-2)\nu \leq \frac{\mu}{2}$\footnote{ By applying Cauchy--Schwartz inequality and more precise analysis, this condition can be relaxed to $(\a-2)\nu < 4\mu$. If there holds $8\mu-\nu>0$, one may choose some $\a>3$ in this step. We remark that this condition $8\mu-\nu>0$ appears in the study of related 3D problems, for instance in \cite{SWZ11}.}, implies
$$
(\a-2) \int_\O  \nu (\Div_x \vu) |\vu|^{\a-3} \vu \cdot (\nabla_x |\vu|) \,\dx \leq  \frac{1}{2}\int_\O \mu  |\vu|^{\a-2}|\nabla_x \vu|^2\,\dx.
$$
We thus deduce from \eqref{est-vru-alpha-1} that for any $t\in (0,T_*)$:
\ba\label{est-vru-alpha-2}
&\int_\O \vr |\vu|^{\a} \,\dx + \frac{\a}{2}\int_0^t \int_\O \mu |\vu|^{\a-2}|\nabla_x \vu|^2\,\dx \,\dt' \leq \int_\O \vr_0 |\vu_0|^{\a} \,\dx \\
& + \a \int_0^t \int_\O p(\vr) \Div_x(|\vu|^{\a-2} \vu)  \,\dx\,\dt' + \a \int_0^t \int_\O (kL\eta + \de\, \eta^2) \Div_x(|\vu|^{\a-2} \vu)  \,\dx\,\dt' \\
& - \a \int_0^t \int_\O \TT : \nabla_x(|\vu|^{\a-2} \vu)  \,\dx\,\dt' + \a \int_0^t \int_\O \vr \ff \cdot |\vu|^{\a-2} \vu  \,\dx\,\dt'.
\ea

Note the fact $|\Div_x(|\vu|^{\a-2} \vu)| \leq (\a-1)|\vu|^{\a-2}|\nabla_x \vu|$. Then H\"older's inequality implies
\ba\label{est-vru-alpha-vr}
  \int_0^t \int_\O p(\vr) \Div_x(|\vu|^{\a-2} \vu)  \,\dx\,\dt' \leq C(\|\vr\|_{L^\infty((0,T_*)\times \O)}) \int_0^t \int_\O \vr |\vu|^\a  + |\nabla_x \vu|^{\frac{\a}{2}}  \,\dx \,\dt',
\ea
where the integral related to $|\nabla_x \vu|^{\frac{\a}{2}}$ is uniformly bounded in $t\in (0,T_*)$ as long as $\a\leq 4$.

We then calculate
\ba\label{est-vru-alpha-eta}
 & \int_0^t \int_\O (kL\eta + \de\, \eta^2) \Div_x(|\vu|^{\a-2} \vu)  \,\dx\,\dt' \\
 &\qquad  \leq C  \|\eta+\eta^2\|_{L^\infty(0,t;L^4(\O))} \|\nabla_x \vu\|_{L^2((0,t)\times \O)} \| |\vu|^{\a-2}\|_{L^2(0,t; L^{4}( \O))}.
\ea
 By \eqref{a-priori-3}, Proposition \ref{lem-eta-better} and Sobolev embedding, the quantity on the right-hand side of \eqref{est-vru-alpha-eta} is uniformly bounded in $t\in (0,T_*)$ as long as  $2(\a-2)\leq 2$ which is equivalent to $\a\leq 3$.

By Proposition \ref{lem-TT-better} and Sobolev embedding inequality, we have
\ba\label{est-vru-alpha-TT}
 -  \int_0^t \int_\O \TT : \nabla_x(|\vu|^{\a-2} \vu)  \,\dx\,\dt'  \leq C  \|\TT\|_{L^\infty(0,t;L^4( \O)}  \|\nabla_x \vu\|_{L^2((0,t)\times \O)} \| |\vu|^{\a-2}\|_{L^2(0,t; L^{4}( \O))},
\ea
which is uniformly bounded  in $t\in (0,T_*)$ provided $2(\a-2) \leq 2 \Leftrightarrow \a \leq 3$. Similarly,
\be\label{est-vru-alpha-ff}
\int_0^t \int_\O \vr \ff \cdot |\vu|^{\a-2} \vu  \,\dx\,\dt'\leq C \int_0^t \int_\O \vr |\vu|^{\a}   \,\dx\,\dt' + C.
\ee

Summing up estimates \eqref{est-vru-alpha-2}--\eqref{est-vru-alpha-ff}, Gronwall's inequality gives our desired estimate in \eqref{est-vru-alpha}.

\subsection{Improved estimates for $\nabla_x u$}

Introduce $\vv_\vr, \ \vv_\eta, \ \vv_\tau$ solving the following Dirichlet problems of Lam\'e systems:
\ba\label{Lame-vr}
&\left\{\begin{aligned} & - \mu \Delta_x \vv_\vr - \nu \nabla_x \Div_x \vv_\vr = \nabla_x p(\vr), \quad &&\mbox{in}\ \O,\\
&\vv_\vr =0, \quad &&\mbox{on}\ \d \O,
\end{aligned}\right.\\
&\left\{\begin{aligned} & - \mu \Delta_x \vv_\eta - \nu \nabla_x \Div_x \vv_\eta = \nabla_x (k L \eta +\de \eta^2), \quad &&\mbox{in}\ \O,\\
&\vv_\eta =0, \quad &&\mbox{on}\ \d \O,
\end{aligned}\right.\\
&\left\{\begin{aligned} & - \mu \Delta_x \vv_\tau - \nu \nabla_x \Div_x \vv_\tau = -\Div_x \TT, \quad &&\mbox{in}\ \O,\\
&\vv_\tau =0, \quad &&\mbox{on}\ \d \O.
\end{aligned}\right.
\ea
By the notation introduced below Lemma \ref{lem-Lame}, we write
$$\vv_\vr= \mathcal{L}^{-1}(\nabla_x p(\vr)), \quad \vv_\eta= \mathcal{L}^{-1}(\nabla_x (kL \eta +\de \eta^2)), \quad \vv_\tau=  \mathcal{L}^{-1}(-\Div_x \TT). $$
Since $\LLL$ is a linear operator independent of the time variable $t$, we have, as long as the calculation makes sense, that
\be\label{Lame-dt}
\d_t \mathcal{L}^{-1} (\vv) = \mathcal{L}^{-1} (\d_t \vv).\nn
\ee

By \eqref{ass-criterion-bd2}, Propositions \ref{lem-eta-better} and \ref{lem-TT-better}, applying Lemma \ref{lem-Lame} gives
\ba\label{est-vv}
&\vv_\vr \in L^\infty(0,T_*;W_0^{1,r}(\O;\R^2)), \quad \vv_\eta \in L^\infty(0,T_*;W_0^{1,r}(\O;\R^2)) \cap L^2(0,T_*;W^{2,r}(\O;\R^2)),\\
&\vv_\tau \in L^\infty(0,T_*;W_0^{1,r}(\O;\R^2)) \cap L^2(0,T_*;W^{2,r}(\O;\R^2)),\qquad \mbox{for any $r\in (1,\infty)$.}
\ea

We then introduce
\be\label{def-w}
\vw := \vu - \vv, \quad \vv:=(\vv_\vr + \vv_\eta + \vv_\tau)
\ee
solving in $(0,T_*)\times \O$ the system
\be\label{eq-w}
\vr \d_t \vw - \mu \Delta_x \vw - \nu \nabla_x \Div_x \vw = - \vr \vu\cdot \nabla_x \vu - \vr \d_t \vv,
\ee
with no slip boundary condition
\be\label{bdry-w}
\vw = 0 \quad \mbox{on} \ (0,T_*)\times \d\O.
\ee

The main goal of this section is to prove the following proposition which is a inspired by Proposition 3.2 in \cite{SZ11}. The proof here is more difficult and technical due to the presence of extra terms in $\eta$ and $\TT$.
\begin{proposition}\label{prop-w}
Under the assumption \eqref{ass-criterion-bd2}, we have for some $T_1\in (0,T_*)$ that
$$
\vw\in L^\infty(0,T_*;W^{1,2}_0(\O;\R^2)) \cap L^2(0,T_*;W^{1,r}(\O;\R^2)) \cap L^2(T_1, T_*;W^{2,2}(\O;\R^2)), \quad \mbox{for any $r\in (1,\infty)$}.
$$

\end{proposition}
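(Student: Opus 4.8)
The plan is to run an energy estimate on the equation \eqref{eq-w} for $\vw$, using the fact that $\vr$ is bounded above (assumption \eqref{ass-criterion-bd2}), that $\vr|\vu|^\alpha$ is bounded in $L^\infty(0,T_*;L^1(\O))$ for some $\alpha>2$ from the previous subsection, and that $\vv$, $\d_t\vv$ enjoy good space-time integrability via Lemma \ref{lem-Lame} applied to \eqref{Lame-vr} and the improved bounds on $\eta$ and $\TT$ from Propositions \ref{lem-eta-better} and \ref{lem-TT-better}. First I would multiply \eqref{eq-w} by $\d_t\vw$ and integrate over $\O$, producing $\int_\O \vr|\d_t\vw|^2 + \tfrac{\d}{\d t}\int_\O(\tfrac\mu2|\nabla_x\vw|^2+\tfrac\nu2|\Div_x\vw|^2)$ on the left, and on the right the two terms $-\int_\O \vr(\vu\cdot\nabla_x\vu)\cdot\d_t\vw$ and $-\int_\O \vr\,\d_t\vv\cdot\d_t\vw$. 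The second of these is controlled by Cauchy--Schwarz as $\tfrac14\int_\O\vr|\d_t\vw|^2 + C\int_\O\vr|\d_t\vv|^2$, and $\int_\O\vr|\d_t\vv|^2\le \|\vr\|_{L^\infty}\|\d_t\vv\|_{L^2}^2$, which is integrable in time once we know $\d_t\vv\in L^2(0,T_*;L^2(\O))$; this in turn follows from differentiating the Lam\'e systems in $t$ and using the equations \eqref{01a}, \eqref{03a}, \eqref{04a} to express $\d_t p(\vr)$, $\d_t\eta$, $\d_t\TT$ in divergence-type form plus the estimates already established.

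The genuinely delicate term is $-\int_\O \vr(\vu\cdot\nabla_x\vu)\cdot\d_t\vw$. Writing $\vu=\vw+\vv$ and $\d_t\vw$ against it, I would bound $|\vr\,\vu\cdot\nabla_x\vu|$ in $L^2$ by splitting $\nabla_x\vu=\nabla_x\vw+\nabla_x\vv$: the piece with $\nabla_x\vv$ is harmless since $\vv\in L^2(0,T_*;W^{2,r})$, so $\nabla_x\vv$ is bounded in $L^2(0,T_*;L^\infty)$; the piece $\vr\,\vu\cdot\nabla_x\vw$ is estimated by $\|\vr\|_{L^\infty}^{1/2}\,\|\sqrt\vr\,\vu\|_{L^{2\alpha/(\alpha-2)}}\cdots$ — more practically, using $\vr|\vu|^\alpha\in L^\infty L^1$ together with the Gagliardo--Nirenberg inequality in $d=2$ (Lemma \ref{lem-GN}) one obtains, after a standard interpolation, a bound of the form $C\|\nabla_x\vw\|_{L^2}^{1+\theta}$ times lower-order factors, absorbed by the dissipation term $\int_\O\vr|\d_t\vw|^2$ and a Gronwall argument in $\tfrac{\d}{\d t}\|\nabla_x\vw\|_{L^2}^2$. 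This yields $\vw\in L^\infty(0,T_*;W_0^{1,2})$ and $\d_t\vw\in L^2(0,T_*;L^2)$ (weighted by $\vr$, but $\vr$ has an a priori positive lower bound propagated from \eqref{bound-lower} or at least locally, which must be checked; alternatively one avoids needing the lower bound by keeping the $\sqrt\vr$ weights).

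With $\vw$ bounded in $L^\infty(0,T_*;W_0^{1,2})$ and $\vr\d_t\vw\in L^2(0,T_*;L^2)$, I would then read \eqref{eq-w} as an elliptic (Lam\'e) problem at a.e. fixed time with right-hand side $\vr\d_t\vw+\vr\vu\cdot\nabla_x\vu+\vr\d_t\vv\in L^2((0,T_*)\times\O)$ — here on a time interval $(T_1,T_*)$ where the $L^2 L^2$ norm of $\vr\d_t\vw$ is finite — and apply Lemma \ref{lem-Lame}(i) with $r=2$ to conclude $\vw\in L^2(T_1,T_*;W^{2,2})$. For the $L^2(0,T_*;W^{1,r})$ statement with arbitrary $r<\infty$, I would instead use Lemma \ref{lem-Lame}(ii): rewrite $\vr\vu\cdot\nabla_x\vu=\Div_x(\vr\vu\otimes\vu)-(\Div_x(\vr\vu))\vu=\Div_x(\vr\vu\otimes\vu)+(\d_t\vr)\vu$ by \eqref{01a}; the tensor $\vr\vu\otimes\vu$ lies in $L^2(0,T_*;L^r)$ because $\vu=\vw+\vv$ with $\vw\in L^\infty L^2\cap L^2 W^{2,2}\hookrightarrow$ high $L^q$ in space and $\vv\in L^\infty W^{1,r}$, while $\vr\d_t\vw$ and $\vr\d_t\vv$ are handled as divergence or direct terms, giving $\vw\in L^2 W^{1,r}$. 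The main obstacle I anticipate is establishing $\d_t\vv\in L^2(0,T_*;L^2)$ cleanly: one needs to differentiate the three Lam\'e problems in time and feed in $\d_t p(\vr)$, $\d_t\eta$, $\d_t\TT$ from the PDEs, which are only divergence-form quantities, so one must invoke parts (ii)--(iii) of Lemma \ref{lem-Lame} rather than (i), and carefully track the $L^2$-in-time integrability of the resulting combinations of $\vu\nabla_x\vr$, $\vu\nabla_x\eta$, $\vu\cdot\nabla_x\TT$, $\nabla_x\vu\,\TT$ and $\e\Delta_x\eta,\e\Delta_x\TT$ using the a priori bounds from Sections \ref{sec-a-priori} and \ref{sec-higher-eta}.
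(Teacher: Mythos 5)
Your overall strategy (test \eqref{eq-w} with $\d_t\vw$, keep the $\sqrt\vr$ weights, then recover $W^{2,2}$ and $W^{1,r}$ by reading \eqref{eq-w} as a Lam\'e problem) is the same skeleton as the paper's proof, but there is a genuine gap at the step you yourself flag as delicate: you assert that $\d_t\vv\in L^2(0,T_*;L^2(\O))$ ``follows from differentiating the Lam\'e systems in $t$ \ldots plus the estimates already established.'' This is true for $\d_t\vv_\vr$ and $\d_t\vv_\tau$, but it fails for $\d_t\vv_\eta$ when $\de>0$: differentiating $\vv_\eta=\LLL(\nabla_x(kL\eta+\de\eta^2))$ produces, among other terms, $2\de\e\,\LLL(\nabla_x(\eta\Delta_x\eta))$, and after writing $\eta\Delta_x\eta=\Div_x(\eta\nabla_x\eta)-\tfrac12|\nabla_x\eta|^2$ and applying Lemma \ref{lem-Lame}, one needs $\|\nabla_x\eta\|_{L^4(T_1,t;L^3(\O))}^2$. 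The a priori bounds of Sections \ref{sec-a-priori}--\ref{sec-higher-eta} give $\nabla_x\eta$ only in $L^2$ in time, so this quantity is \emph{not} controlled by the established estimates; the paper bounds it through the heat equation \eqref{03a} by $\|\eta\vu\|_{L^4L^3}\lesssim\|\vu\|_{L^\infty(T_1,t;L^2)}^{1/2}\|\nabla_x\vu\|_{L^2(T_1,t;L^2)}^{1/2}$, and $\|\vu\|_{L^\infty L^2}$ is itself \emph{not} a priori bounded (Theorem \ref{thm-criterion} assumes no lower bound on $\vr$, so your fallback to \eqref{bound-lower} is not available here). Consequently the right-hand side of the energy inequality contains the superlinear term $C\|\nabla_x\vw\|_{L^\infty(T_1,t;L^2)}^2\,\|\nabla_x\vu\|_{L^2(T_1,t;L^2)}^2$ (after Poincar\'e and \eqref{est-vv}), and a plain Gronwall argument as you describe does not close. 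The paper's essential idea, absent from your proposal, is to exploit $\|\nabla_x\vu\|_{L^2(T_1,T_*;L^2)}\to 0$ as $T_1\to T_*$ and choose $T_1$ close to $T_*$ so that this term can be absorbed into the left-hand side; this is precisely why the proposition is stated ``for some $T_1\in(0,T_*)$'' and why the $W^{2,2}$ regularity is claimed only on $(T_1,T_*)$ (on $[0,T_1]$ one simply uses the strong-solution bounds \eqref{est-strong-T1}). Your sketch never explains where $T_1$ comes from, which is the symptom of this missing ingredient.

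Two secondary inaccuracies: (i) your treatment of the convective term splits $\nabla_x\vu=\nabla_x\vw+\nabla_x\vv$ and claims $\nabla_x\vv\in L^2(0,T_*;L^\infty(\O))$ because $\vv\in L^2(0,T_*;W^{2,r})$; this is false for the component $\vv_\vr=\LLL(\nabla_x p(\vr))$, which under \eqref{ass-criterion-bd2} only enjoys the $L^\infty(0,T_*;W^{1,r}_0)$ bound of \eqref{est-vv} (Lemma \ref{lem-Lame}(ii)), not a $W^{2,r}$ bound. The paper avoids this by estimating $\|\sqrt\vr\,\vu\cdot\nabla_x\vu\|_{L^2L^2}\le C\|\vr^{1/\a}\vu\|_{L^\infty L^\a}\|\nabla_x\vu\|_{L^2L^{2\a/(\a-2)}}$ directly and interpolating only the $\nabla_x\vw$ part. (ii) That interpolation inevitably produces $\delta\|\nabla_x^2\vw\|_{L^2L^2}$, which must be absorbed by running the Lam\'e elliptic estimate \eqref{est-w-f-2} \emph{inside} the energy argument, before Gronwall, not only afterwards as in your ordering; this is fixable but should be made explicit.
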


\begin{proof}

By \eqref{est-strong-T1} and \eqref{est-vv}, there holds for any $T_1\in (0,T_*)$ that
\be\label{est-w-00}
\vw\in L^\infty(0,T_1;W^{1,2}_0) \cap L^2(0,T_1;W^{1,r})(\O;\R^2), \quad \mbox{for any $r\in (1,\infty)$}.\nn
\ee
Thus, it is sufficient to prove for some $T_1\in (0,T_*)$, which shall be fixed later on close to $T_*$, that
\be\label{est-w-0}
\vw\in L^\infty(T_1,T_*;W^{1,2}_0) \cap L^2(T_1,T_*;W^{1,r})\cap L^2(T_1, T_*;W^{2,2})(\O;\R^2)\quad \mbox{for any $r\in (1,\infty)$}.
\ee

From \eqref{eq-w}, we deduce by direct calculation that, for any $t\in (0,T_*)$,
\ba\label{est-w-1}
& \int_\O  \vr |\d_t \vw|^2 \,\dx + \frac{1}{2}\frac{\rm d}{\dt}\int_\O \mu|\nabla_x \vw|^2 \,\dx \leq \int_\O  \vr |\d_t \vw | \, |\vu\cdot \nabla_x \vu -  \d_t \vv| \,\dx \\
 &\quad \leq \frac{1}{2}\int_\O  \vr |\d_t \vw |^2  \,\dx + \int_\O  \vr |\vu\cdot \nabla_x \vu|^2 \,\dx + \int_\O \vr |  \d_t \vv|^2  \,\dx.\nn
\ea
This gives, for any $T_1\in (0,T_*)$ and any $t\in (T_1,T_*)$, that
\ba\label{est-w-2}
&\int_\O \mu|\nabla_x \vw|^2(t,\cdot) \,\dx +  \int_{T_1}^t \int_\O  \vr |\d_t \vw|^2 \,\dx \,\dt' \\
&\leq \int_\O \mu|\nabla_x \vw|^2(T_1,\cdot) \,\dx +  2 \int_{T_1}^t \int_\O  \vr |\vu\cdot \nabla_x \vu|^2\,\dx \, \dt'+ 2 \int_{T_1}^t \int_\O \vr |  \d_t \vv|^2 \,\dx\,\dt'.
\ea

We need to estimate the last two terms in \eqref{est-w-2}. For the first one, by \eqref{est-vru-alpha},  H\"older's inequality, Young's inequality, and Gagliardo--Nirenberg inequality, we have for some $\a>2$ that
\ba\label{est-w-3}
\| \sqrt\vr |\vu\cdot \nabla_x \vu|\|_{L^2(T_1,t;L^2(\O))}&\leq C \| \vr^{\frac{1}{\a}} |\vu| \|_{L^\infty(T_1,t;L^\a(\O))} \|\nabla_x\vu\|_{L^2(T_1,t;L^{\frac{2\a}{\a-2}}(\O))}\\
&\leq C \big( 1 + \|\nabla_x\vw\|_{L^2(T_1,t;L^{\frac{2\a}{\a-2}}(\O))}\big)\\
&\leq C \big( 1 + \|\nabla_x\vw\|_{L^2(T_1,t;L^{2}(\O))}^{\th}\|\nabla_x^2\vw\|_{L^2(T_1,t;L^{2}(\O))}^{1-\th}\big)\\
&\leq C  + C_\delta \|\nabla_x\vw\|_{L^2(T_1,t;L^{2}(\O))} + \delta \|\nabla_x^2\vw\|_{L^2(T_1,t;L^{2}(\O))},
\ea
for some $\th\in(0,1)$ determined by $\a$, for any $\delta >0$ and some $C_\delta >0$.

\medskip

We now estimate the $L^2$ norm of $\d_ t \vv = \d_t \vv_\vr + \d_t \vv_\eta + \d_t \vv_\tau$. For $\d_t \vv_\vr$, we have
\ba
\d_t \vv_\vr & = \d_t \LLL (\nabla_x p(\vr)) =  \LLL (\nabla_x (p'(\vr)\d_t \vr)) = - \LLL (\nabla_x (p'(\vr)\Div_x (\vr \vu)) )\\
& = - \LLL (\nabla_x \Div_x (p(\vr) \vu) ) - \LLL (\nabla_x ((p'(\vr)\vr - p(\vr))\Div_x  \vu) ).
\nn
\ea
By \eqref{ass-criterion-bd2}, applying Lemma \ref{lem-Lame} gives
\ba\label{est-dt-vvr}
\|\d_t \vv_\vr\|_{L^2(0,t;L^2(\O))} \leq C \|\nabla_x \vu\|_{L^2(0,t;L^2(\O))} \leq C.
\ea

\medskip

It is much more complicated to estimate $\d_t \vv_\eta$. By the equation in $\eta$, we have
\ba
\d_t \vv_\eta & = \d_t \LLL (\nabla_x (kL \eta + \de \eta^2)) =  k L \LLL (\nabla_x (\d_t \eta)) + 2\de \LLL(\nabla_x (\eta \d_t \eta)) \\
& =  k L \LLL (\nabla_x (-\Div_x (\eta\vu) + \e \Delta_x \eta) ) + 2\de \LLL (\nabla_x (-\eta \Div_x (\eta\vu) + \e \eta \Delta_x \eta) ) \\
& = k L \LLL (\nabla_x (-\Div_x (\eta\vu + \e \nabla_x \eta) ) \\
&+ \de \LLL (\nabla_x (-\Div_x (\eta^2\vu))) + \de \LLL (\nabla_x (-\eta^2\Div_x \vu)) + 2\de \e \LLL (\nabla_x ( \eta \Delta_x \eta) ).
\nn
\ea
By Lemma \ref{lem-Lame}, \eqref{a-priori-3}, Proposition \ref{lem-eta-better} and Sobolev embedding, we have
\ba
&\|\LLL (\nabla_x (-\Div_x (\eta\vu + \e \nabla_x \eta) )\|_{L^2(0,t;L^2(\O))} \leq C\|\eta\vu +  \nabla_x \eta\|_{L^2(0,t;L^2(\O))} \\
& \quad \leq C  \|\eta\|_{L^\infty(0,t;L^4(\O))}  \|\vu\|_{L^2(0,t;L^4(\O))} + C \|\eta\|_{L^2(0,t;W^{1,2}(\O))}
\leq C.\nn
\ea
Similarly,
\ba
&\|\LLL (\nabla_x (-\Div_x (\eta^2\vu)))\|_{L^2(0,t;L^2(\O))} \leq C\|\eta^2\vu \|_{L^2(0,t;L^2(\O))} \leq C,\\
&\|\LLL (\nabla_x (-\eta^2\Div_x \vu))\|_{L^2(0,t;L^2(\O))}\leq C \|\eta^2\Div_x \vu\|_{L^2(0,t;L^{\frac 32}(\O))}\leq C.
\nn
\ea
Then for $\d_t \vv_\eta$, it is left to estimate $\LLL (\nabla_x ( \eta \Delta_x \eta) )$, which is the most difficult one to estimate. Observing the fact
$$
\eta \Delta_x \eta = \frac{1}{2} (\Delta_x \eta^2 - |\nabla_x \eta|^2) =  \Div_x (\eta \nabla_x\eta) - \frac{1}{2}|\nabla_x \eta|^2,
$$
and applying Lemma \ref{lem-Lame} and Proposition \ref{lem-eta-better}, gives that
\ba\label{est-dt-veta-f1}
&\|\LLL (\nabla_x ( \eta \Delta_x \eta) )\|_{L^2(T_1,t;L^2(\O))} \\
&\quad \leq \|\LLL (\nabla_x \Div_x(\eta \nabla_x \eta))\|_{L^2(T_1,t;L^2(\O))} +  \frac{1}{2} \| \LLL (\nabla_x (|\nabla_x \eta|^2) ) \|_{L^2(T_1,t;L^2(\O))}\\
&\quad \leq C \|\eta \nabla_x \eta\|_{L^2(T_1,t;L^2(\O))} +  C\||\nabla_x \eta|^2\|_{L^2(T_1,t;L^{\frac 32}(\O))}\\
&\quad \leq C + C \|\nabla_x \eta\|_{L^4(T_1,t;L^{3}(\O))}^2.
\nn
\ea

We benefit from the parabolic equation \eqref{03a} in $\eta$, by Lemma \ref{lem-parabolic-2}, to obtain the control:
\ba\label{est-dt-veta-f2}
 \|\nabla_x \eta\|_{L^4(T_1,t;L^{3}(\O))}&\leq C \|\eta\vu\|_{L^4(T_1,t;L^{3}(\O))} \leq C \|\eta\|_{L^\infty(T_1,t;L^{12}(\O))} \|\vu\|_{L^4(T_1,t;L^{4}(\O))}\\
 &\leq C \|\vu\|_{L^\infty(T_1,t;L^{2}(\O))}^{\frac{1}{2}}\|\nabla_x \vu\|_{L^2(T_1,t;L^{2}(\O))}^{\frac{1}{2}},
\nn
\ea
where we used Gagliardo--Nirenberg inequality.  Therefore,
 \ba\label{est-dt-veta-f}
 \int_{T_1}^t \int_\O \vr |  \d_t \vv_\eta|^2 \,\dx\,\dt' \leq C \|\d_t \vv_\eta\|_{L^2(T_1,t;L^{2}(\O))}^2 \leq C
+ C \|\vu\|_{L^\infty(T_1,t;L^{2}(\O))}^2 \|\nabla_x \vu\|_{L^2(T_1,t;L^{2}(\O))}^{2}.
\ea
We remark that there is \emph{no} control for $\|\vu\|_{L^\infty(T_1,t;L^{2}(\O))}^2$ so far. We will see later on that this term can be absorbed by a positive term on the left-hand side by using the smallness of $\|\nabla_x \vu\|_{L^2(T_1,t;L^{2}(\O))}^{2}$ when $T_1$ is close to $T_*$.

\medskip

For $\d_t \vv_\tau$, direct calculation gives
\ba
\d_t \vv_\tau &= \d_t \LLL(-\Div_x \TT) =   \LLL \Div_x (-\d_t\TT)     \\
& = - \LLL \Div_x \big( -{\rm Div}_x (\vu\,\TT) + \big(\nabla_x \vu \,\TT + \TT\, \nabla_x^{\rm T} \vu \big) + \e \Delta_x \TT + \frac{k}{2\lambda}\eta  \,\II - \frac{1}{2\lambda} \TT\big).
\nn
\ea
Then by \eqref{a-priori-3}, Lemma \ref{lem-Lame} and Propositions \ref{lem-eta-better} and \ref{lem-TT-better}, we obtain
\ba\label{est-vtau-f}
& \|\d_t \vv_\tau\|_{L^2(0,t;L^2(\O))}  \leq C \big( \| |\nabla_x \vu||\TT| \|_{L^2(0,t;L^{\frac{4}{3}}(\O))} +  \| (|\vu||\TT|, \nabla_x \TT,\TT, \eta)\|_{L^2(0,t;L^2(\O))}\big)\\
& \leq C\left(\|\vu\|_{L^2(0,t;L^4(\O))}\|\TT\|_{L^\infty(0,t;L^4(\O))} +  \|\nabla_x \vu \|_{L^2(0,t;L^{2}(\O))} \|\TT\|_{L^\infty(0,t;L^4(\O))} +  1 \right) \leq C.
\ea

Using the estimates in \eqref{est-w-3}, \eqref{est-dt-vvr}, \eqref{est-dt-veta-f} and \eqref{est-vtau-f} into \eqref{est-w-2} implies
\ba\label{est-w-f-1}
&\int_\O \mu|\nabla_x \vw|^2(t) \,\dx +  \int_{T_1}^t \int_\O  \vr |\d_t \vw|^2 \,\dx \,\dt' \leq \int_\O \mu|\nabla_x \vw|^2(T_1) \,\dx + \delta \|\nabla_x^2\vw\|_{L^2(T_1,t;L^{2}(\O))} \\
&\quad + C_\delta \|\nabla_x \vw\|_{L^2(T_1,t;L^{2}(\O))}  + C \|\vu\|_{L^\infty(T_1,t;L^{2}(\O))}^2 \|\nabla_x \vu\|_{L^2(T_1,t;L^{2}(\O))}^{2} + C.
\ea

By rewriting \eqref{eq-w} into a Lam\'e system
\be\label{eq-w-Lame}
- \mu \Delta_x \vw - \nu \nabla_x \Div_x \vw = -\vr \d_t \vw - \vr \vu\cdot \nabla_x \vu - \vr \d_t \vv,
\ee
armed with no-slip boundary condition \eqref{bdry-w}, we can apply Lemma \ref{lem-Lame} to obtain
\ba\label{est-w-f-2}
\|\nabla_x^2 \vw\|_{L^2(T_1,t;L^2(\O))} \leq C \|\vr \d_t \vw + \vr \vu\cdot \nabla_x \vu + \vr \d_t \vv \|_{L^2(T_1,t;L^2(\O))}.
\ea
Again by the estimates in \eqref{est-w-3}, \eqref{est-dt-vvr}, \eqref{est-dt-veta-f} and \eqref{est-vtau-f}, we deduce from  \eqref{est-w-f-2} that
\ba\label{est-w-f-3}
&\|\nabla_x^2 \vw\|_{L^2(T_1,t;L^2(\O))} \leq C \|\sqrt\vr \d_t \vw\| \|_{L^2(T_1,t;L^2(\O))} +  C_\delta \|\nabla_x \vw\|_{L^2(T_1,t;L^{2}(\O))} \\
&\quad + C \delta \|\nabla_x^2\vw\|_{L^2(T_1,t;L^{2}(\O))}
 + C \|\vu\|_{L^\infty(T_1,t;L^{2}(\O))}^2 \|\nabla_x \vu\|_{L^2(T_1,t;L^{2}(\O))}^{2} + C.
\ea
 By choosing $\delta>0$ small such that $C \delta \leq 1/2$, we deduce from \eqref{est-w-f-3} that
\ba\label{est-w-f-4}
\|\nabla_x^2 \vw\|_{L^2(T_1,t;L^2(\O))}& \leq C \|\sqrt\vr \d_t \vw\| \|_{L^2(T_1,t;L^2(\O))} +  C \|\nabla_x \vw\|_{L^2(T_1,t;L^{2}(\O))}  \\
&\quad + C \|\vu\|_{L^\infty(T_1,t;L^{2}(\O))}^2 \|\nabla_x \vu\|_{L^2(T_1,t;L^{2}(\O))}^{2} + C.
\ea
Then plugging estimate \eqref{est-w-f-4} into \eqref{est-w-f-1} and choosing $\delta>0$ sufficient small (fixed) implies
\ba\label{est-w-f-5}
&\int_\O \mu|\nabla_x \vw|^2(t,\cdot) \,\dx +  \frac{1}{2}\int_{T_1}^t \int_\O  \vr |\d_t \vw|^2 \,\dx \,\dt' \leq \int_\O \mu|\nabla_x \vw|^2(T_1,\cdot) \,\dx \\
&\quad + C \int_{T_1}^t \int_\O |\nabla_x \vw|^2 \,\dx\,\dt'  + C \|\vu\|_{L^\infty(T_1,t;L^{2}(\O))}^2 \|\nabla_x \vu\|_{L^2(T_1,t;L^{2}(\O))}^{2} + C.
\ea

By Poincar\'e inequality and \eqref{est-vv}, we have
\ba\label{est-vu-L-infty}
\|\vu\|_{L^\infty(T_1,t;L^{2}(\O))} \leq C \|\nabla_x \vu\|_{L^\infty(T_1,t;L^{2}(\O))}
\leq C \left( 1 + \|\nabla_x \vw\|_{L^\infty(T_1,t;L^{2}(\O))} \right).
\ea
Together with \eqref{est-w-f-5}, we obtain for any $t\in (T_1, T_*)$ that
\ba\label{est-w-f-6}
&\int_\O \mu|\nabla_x \vw|^2(t,\cdot) \,\dx +  \frac{1}{2}\int_{T_1}^t \int_\O  \vr |\d_t \vw|^2 \,\dx \,\dt' \leq \int_\O \mu|\nabla_x \vw|^2(T_1,\cdot) \,\dx \\
&\quad + C \int_{T_1}^t \int_\O |\nabla_x \vw|^2 \,\dx\,\dt'  + C \|\nabla\vw\|_{L^\infty(T_1,t;L^{2}(\O))}^2 \|\nabla_x \vu\|_{L^2(T_1,t;L^{2}(\O))}^{2} + C.
\ea
This implies, the following nonnegative quantity
$$
\xi(t):= \|\nabla_x\vw\|_{L^\infty(T_1,t;L^{2}(\O))}^2
$$
satisfies for any $t\in (T_1,T_*)$:
\ba\label{est-w-f-7}
\xi(t) \leq \xi(T_1) + C \int_{T_1}^t \xi(t') \dt' +C \xi(t) \|\nabla_x \vu\|_{L^2(T_1,T_*;L^{2}(\O))}^{2} + C.
\ea
Since $\|\nabla_x \vu\|_{L^2(0,T_*;L^{2}(\O))} <\infty$, there holds
\be\label{small-u-T1}
 \|\nabla_x \vu\|_{L^2(T_1,T_*;L^{2}(\O))} \to 0,\quad \mbox{as $T_1\to T_*$}.\nn
\ee
Thus, by choosing $T_1\in (0,T_*)$ be close to $T_*$ such that $C \|\nabla_x \vu\|_{L^2(T_1,T_*;L^{2}(\O))}^{2} \leq 1/2,$  we deduce from \eqref{est-w-f-7} that
\ba\label{est-w-f-8}
\xi(t) \leq 2 \xi(T_1) + C \int_{T_1}^t \xi(t') \dt' + C.\nn
\ea
Gronwall's inequality implies for any $t\in (T_1,T_*)$:
\ba\label{est-w-f-9}
\xi(t) \leq  e^{C(t-T_1)}\big(2 \xi(T_1) + C \big) \Longrightarrow \|\nabla_x\vw\|_{L^\infty(T_1,t;L^{2}(\O))} \leq C.
\ea
Combining the estimates in \eqref{est-w-f-6} and \eqref{est-w-f-9} gives for any $t\in (T_1,T_*)$ that
\ba\label{est-w-f0}
\|\nabla_x\vw\|_{L^\infty(T_1,t;L^{2}(\O))}  +  \|\sqrt\vr \d_t \vw\| \|_{L^2(T_1,t;L^2(\O))} \leq C.
\ea
Together with \eqref{est-w-f-3} and \eqref{est-vu-L-infty}, we obtain
\ba\label{est-w-f00}
\|\nabla_x^2\vw\|_{L^2(T_1,t;L^{2}(\O))}\leq C,\quad \mbox{for any $t\in [T_1, T_*)$}.
\ea

By \eqref{est-w-f0} and \eqref{est-w-f00}, using Sobolev embedding theorem, we thus obtain our desired estimates in \eqref{est-w-0}. The proof is then completed.

\end{proof}

\medskip

By the estimates in \eqref{est-strong-T1} and \eqref{est-vv}, a direct corollary of Proposition \ref{prop-w} is the following:
\begin{proposition}\label{pro-vu-better} Under the assumption \eqref{ass-criterion-bd2}, we have
\be\label{est-vu-better}
\vu \in L^\infty(0,T_*;W^{1,2}_0) \cap L^2(0,T_*;W^{1,r}) \quad \mbox{for any $r\in (1,\infty)$}.\nn
\ee

\end{proposition}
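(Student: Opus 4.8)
The plan is simply to add the two pieces of the decomposition \eqref{def-w}, $\vu = \vw + \vv$ with $\vv = \vv_\vr + \vv_\eta + \vv_\tau$; all the analytic work has already been done, in Proposition \ref{prop-w} for $\vw$ and in establishing \eqref{est-vv} for the Lam\'e correctors.

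First I would record the contribution of $\vv$. By \eqref{est-vv}, under the assumption \eqref{ass-criterion-bd2} each of $\vv_\vr$, $\vv_\eta$, $\vv_\tau$ lies in $L^\infty(0,T_*;W_0^{1,r}(\O;\R^2))$ for every $r\in(1,\infty)$, so the sum $\vv$ lies in $L^\infty(0,T_*;W_0^{1,r}(\O;\R^2))$ for every such $r$. In particular $\vv\in L^\infty(0,T_*;W_0^{1,2}(\O;\R^2))$, and since $(0,T_*)$ has finite length (we are in the regime $T_*<\infty$), the embedding $L^\infty(0,T_*;\,\cdot\,)\hookrightarrow L^2(0,T_*;\,\cdot\,)$ also gives $\vv\in L^2(0,T_*;W^{1,r}(\O;\R^2))$ for every $r\in(1,\infty)$.

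Next I would invoke Proposition \ref{prop-w}, which states precisely that, under \eqref{ass-criterion-bd2}, $\vw\in L^\infty(0,T_*;W_0^{1,2}(\O;\R^2))\cap L^2(0,T_*;W^{1,r}(\O;\R^2))$ for every $r\in(1,\infty)$ (the extra $L^2(T_1,T_*;W^{2,2})$ information there is not needed here). Adding these bounds to the ones for $\vv$ from the previous step, and using $\vu=\vw+\vv$, yields $\vu\in L^\infty(0,T_*;W_0^{1,2}(\O;\R^2))\cap L^2(0,T_*;W^{1,r}(\O;\R^2))$ for every $r\in(1,\infty)$, which is the claim. Alternatively, on the initial slab $[0,T_1]$ one may simply quote the higher regularity \eqref{est-strong-T1}, so that in effect the decomposition is only needed near $T_*$.

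There is essentially no obstacle here: the difficulty was entirely absorbed into Proposition \ref{prop-w}. The only points to keep straight are that \eqref{def-w} is valid on the whole of $(0,T_*)$, and that one should not claim for $\vv_\vr$ any spatial regularity beyond $W_0^{1,r}$ — it is controlled only in divergence form, through $p(\vr)$ rather than $\nabla_x p(\vr)$ — but this does not affect the stated conclusion.
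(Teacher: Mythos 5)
Your proposal is correct and follows exactly the paper's route: the paper states Proposition \ref{pro-vu-better} as a direct corollary of Proposition \ref{prop-w} together with \eqref{est-strong-T1} and \eqref{est-vv}, i.e.\ adding the bounds for $\vw$ and $\vv=\vv_\vr+\vv_\eta+\vv_\tau$ in the decomposition \eqref{def-w}, which is precisely what you do. Your side remarks (finiteness of $T_*$ for the $L^\infty\hookrightarrow L^2$ step in time, and that $\vv_\vr$ only carries $W_0^{1,r}$ regularity since $\nabla_x p(\vr)$ is handled in divergence form) are accurate and consistent with the paper.
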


\subsection{End of the proof}

We are now ready to prove the $L^\infty$ bound of $\eta$ and $\TT$. We rewrite \eqref{03a} as
\be\label{eq-eta}
\d_t \eta - \e \Delta_x \eta = - \nabla_x \eta \cdot \vu  - \eta \Div_x \vu.\nn
\ee
By Proposition \ref{lem-eta-better} and Proposition \ref{pro-vu-better}, we have for any $r\in (1,\infty)$ that
\ba\label{est-eta-new-1}
&\|- \nabla_x \eta \cdot \vu  - \eta \Div_x \vu\|_{L^2(0,T_*;L^r(\O))} \\
&\leq \|\nabla_x \eta\|_{L^2(0,T_*;L^{2r}(\O))} \| \vu\|_{L^\infty (0,T_*;L^{2r}(\O))} + \|\Div_x \vu\|_{L^2(0,T_*;L^{2r}(\O))} \| \eta\|_{L^\infty (0,T_*;L^{2r}(\O))}\leq C.\nn
\ea
This allows us to apply Lemma \ref{lem-parabolic-1} with $p=2, \ q=r$ to deduce
\ba\label{est-eta-new-2}
\|\eta\|_{L^\infty(0,T_*;W^{1,r}(\O))} + \|\partial_t\eta \|_{L^2(0,T_*; L^r(\O))} + \|\eta\|_{L^2(0,T_*; W^{2,r}(\O))} \leq C.
\ea
This implies, by choosing $r>2$ and Sobolov embedding theorem, that
\ba\label{est-eta-new-f}
\|\eta\|_{L^\infty(0,T_*;L^\infty(\O))}\leq C<\infty.\nn
\ea

\medskip

While for $\TT$, a similar argument implies
\ba\label{est-TT-new-f}
\|\TT\|_{L^\infty(0,T_*;L^\infty(\O))}\leq C<\infty.\nn
\ea

We thus obtained our desired estimate \eqref{criterion-bd} and finished the proof of Theorem \ref{thm-criterion}.


\section{Relative entropy}

To prove the weak-strong uniqueness stated in Theorem \ref{thm-ws}, in the same spirit as the study in \cite{Germain10} and in \cite{FJN12, FNS14} for compressible Navier--Stokes equations, we introduce a proper relative entropy and build a relative entropy inequality, for which a consequence is the weak-strong uniqueness through tedious analysis.

Firstly, we will introduce a suitable relative entropy for our compressible Oldroyd--B model.  Based on the relative entropy used in \cite{Germain10, FJN12} for compressible Navier--Stokes equations, some proper modification related to the additional terms in $\eta, \ \TT$ need to be done. This modification is not a direct result by analyzing the a priori energy estimate \eqref{energy1-f}. 
For example, the term $\tr (\TT)$ on the left-hand side of the energy estimate \eqref{energy1-f} has a sign due to the positive definite property of $\TT$, but $\tr(\TT - \tilde  \TT)$ has no sign given two positive definite matrices $\TT$ and  $\tilde \TT$. We will see later on that we do not include $\TT - \tilde \TT$ in our definition of relative entropy.

For the notation convenience, we denote
\ba\label{def-H-q-G}
H(s) := \frac{a}{\g-1} s^\g, \ \ q(s) := kL s + \de\, s^2, \ G(s) :=  \left(k L  s \log s  + \de \,s^2\right), \ \forall s\in [0,\infty)
\ea
satisfying
\ba\label{pt-H-q-G}
H'(s)s - H(s) =  p(s),\quad  G'(s) s - G(s) = q(s), \quad H''(s) = p'(s)/s,\quad G''(s) = q'(s)/s,
\ea
where $p(s) = a s^\g$ (see \eqref{pressure}) denoting the power law assumption on the pressure.

Now we introduce the relative entropy.  Let $(\vr,\vu,\eta,\TT)$ be a finite energy weak solution in the sense of Definition \ref{def-weaksl-f} and obtained in Theorem \ref{thm-weak}. Let $\tilde\vr,\ \tilde\vu,\ \tilde\eta,\ \tilde\TT$ be the so-called relative functions which have sufficient regularity. Define the following two relative entropies:
\ba\label{def-entropy}
&\mathcal{E}_1(t)=\mathcal{E}_1(\vr,\vu,\tilde\vr,\tilde \vu)(t):=\int_\O \frac{1}{2} \vr |\vu-\tilde \vu|^2 + \left(H(\vr) - H(\tilde \vr) - H'(\tilde \vr) (\vr-\tilde \vr)\right) (t,\cdot)\,\dx,\\
&\mathcal{E}_2(t)=\mathcal{E}_2(\eta,\teta)(t) :=\int_\O  \left(G(\eta) - G(\tilde \eta) - G'(\tilde \eta) (\eta-\tilde \eta)\right)(t,\cdot) \,\dx.
\ea

\begin{remark}

The relative entropy $\CalE_1$ is the same as in \cite{Germain10, FJN12}. The new one $\CalE_2$ is built in a similar manner. We remark that the extra stress tensor $\TT$ is not included in the relative entropies. One reason is explained above that $\tr\,(\TT - \tilde \TT)$ has no sign. Another reason is that we do not want to make the remainder term $\mathcal{R}$ in the relative entropy inequality shown later on in Proposition \ref{prop-entropy} be too massy. While this is enough to show the weak-strong uniqueness. Indeed, as we shall see later on in Section \ref{sec-thm-ws-unique}, based on the relative entropy inequality obtained in this section, together with a $L^2$ type estimate for $\TT-\tilde \TT$, we can derive the weak-strong uniqueness.

\end{remark}

We give some properties that we will use for the quantities appearing in the relative entropies:
\begin{lemma}\label{lem-H-G}
 There exists $\delta>0, \ c>0$ depending only on $a$ and $\g$ such that for any $\vr, \tvr \geq 0,$
\be\label{pt-H}
H(\vr) - H(\tilde \vr) - H'(\tilde \vr) (\vr-\tilde \vr) \geq  \left\{\begin{aligned} & c \tvr^{\g-2} (\vr-\tvr)^2, \ && \mbox{if $\delta \tvr \leq \vr \leq \delta^{-1} \tvr$},\\
&c \max\{\vr^\g, \tvr^\g\}, \ &&\mbox{otherwise.}
\end{aligned}\right.
\ee

For any $\eta, \teta \geq 0$, there holds
\be\label{pt-G}
G(\eta) - G(\tilde \eta) - G'(\tilde \eta) (\eta-\tilde \eta) \geq 2 \de (\eta-\teta)^2 + \left\{\begin{aligned} &  \frac{k L (\eta-\teta)^2}{2\tilde \eta}, \ && \mbox{if $\eta \leq 2 \tilde \eta$},\\
& \frac{k L \eta}{4}, \ &&\mbox{if $\eta \geq 2 \tilde \eta$.}
\end{aligned}\right.
\ee
\end{lemma}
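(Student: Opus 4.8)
The plan is to establish the two convexity estimates \eqref{pt-H} and \eqref{pt-G} separately, treating each as a one‑variable Taylor‑remainder problem. For \eqref{pt-H}, I would recognize that $H(\vr)-H(\tvr)-H'(\tvr)(\vr-\tvr)$ is exactly the Taylor remainder of $H$ around $\tvr$, hence equals $\int_{\tvr}^{\vr}(\vr-s)H''(s)\,\mathrm{d}s$ with $H''(s)=p'(s)/s = a\g s^{\g-2}>0$. In the "near‑diagonal" regime $\delta\tvr\le\vr\le\delta^{-1}\tvr$ one bounds $H''(s)$ below by a constant times $\tvr^{\g-2}$ (up to a factor depending on $\delta,\g$, using that $s$ is comparable to $\tvr$ on the integration interval), and integrating gives the lower bound $c\,\tvr^{\g-2}(\vr-\tvr)^2$. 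In the "far" regime, one uses the strict convexity of $H$ together with homogeneity: set $f(t):=H(1)-H(t)-H'(t)(1-t)$ (or rather normalize $\vr=t\tvr$) and observe that $H(\vr)-H(\tvr)-H'(\tvr)(\vr-\tvr)=\tvr^\g\big[H(t)-H(1)-H'(1)(t-1)\big]$ by $\g$‑homogeneity of $H$; since the bracket is a continuous, strictly positive function of $t$ on $[0,\delta]\cup[\delta^{-1},\infty)$ growing like $t^\g$ at infinity, it is bounded below by $c\max\{t^\g,1\}$, which upon multiplying by $\tvr^\g$ gives $c\max\{\vr^\g,\tvr^\g\}$. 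Choosing $\delta$ small enough (depending only on $a,\g$) makes both estimates hold simultaneously.

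For \eqref{pt-G}, I would split $G(s)=kL\,s\log s+\de\,s^2$ into its two pieces, since the Taylor remainder is additive. The quadratic piece $\de s^2$ contributes exactly $\de(\eta-\teta)^2$ to the remainder, and I claim it contributes $\ge 2\de(\eta-\teta)^2$ only if the coefficient bookkeeping is arranged so — here I should double‑check the constant, but the remainder of $\de s^2$ is precisely $\de(\eta-\teta)^2$, so the stated "$2\de$" must come from also pulling a matching term out of the logarithmic part, or the statement is simply using $2\de(\eta-\teta)^2 \le$ something; in any case the clean route is: remainder of $\de s^2$ equals $\de(\eta-\teta)^2$, and then I absorb this into the final bound. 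For the logarithmic piece $g(s):=kL\,s\log s$, one has $g''(s)=kL/s>0$, so its remainder is $kL\int_\teta^\eta (\eta-s)/s\,\mathrm{d}s = kL\big(\eta\log(\eta/\teta) - (\eta-\teta)\big)$. In the regime $\eta\le 2\teta$, the integrand $1/s \ge 1/(2\teta)$ on the relevant interval, giving remainder $\ge kL(\eta-\teta)^2/(4\teta)$ — again a constant to verify, but the mechanism is a lower bound on $1/s$. In the regime $\eta\ge 2\teta$, one uses that $\eta\log(\eta/\teta)-(\eta-\teta)$ grows like $\eta\log\eta$ and is bounded below by a fixed fraction of $\eta$, e.g. $\ge (kL/4)\eta$ once $\eta/\teta\ge 2$, since $\log 2 - 1/2 >0$ and the function is increasing in $\eta/\teta$.

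The steps I would carry out, in order, are: (1) write both left‑hand sides as integral Taylor remainders $\int_{\tvr}^{\vr}(\vr-s)H''(s)\,\mathrm{d}s$ and the analogue for $G$; (2) for $H$, prove the near‑diagonal bound by bounding $H''$ below on the interval, and the far bound by the homogeneity‑plus‑compactness argument, then fix $\delta$; (3) for $G$, split off the $\de s^2$ part explicitly, handle $kL\,s\log s$ by the convexity integral, and treat the two ranges $\eta\lessgtr 2\teta$ by elementary one‑variable estimates on $t\mapsto t\log t - t + 1$. The main obstacle I anticipate is purely bookkeeping: tracking the exact numerical constants so that the displayed inequalities hold with the specific coefficients $2\de$, $kL/(2\teta)$, $kL/4$ written in \eqref{pt-G}, and ensuring the threshold "$2\teta$" (rather than some other multiple) is the right cutoff; none of this is conceptually hard, but the constants must be chosen consistently, and in particular one should double check that the "$2\de(\eta-\teta)^2$" term in \eqref{pt-G} is genuinely what the quadratic remainder plus a slack from the logarithmic term yields, or else relax it to $\de(\eta-\teta)^2$ which is all that is actually needed downstream. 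There are no analytic difficulties beyond elementary calculus and the $\g$‑homogeneity of $H$.
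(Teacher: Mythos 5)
Your proposal is correct and is essentially the paper's own argument: both are elementary second--order Taylor/convexity estimates with the same case splittings. The only real difference is in the outer regimes of \eqref{pt-H}: the paper estimates $H(\vr)-H(\tvr)-H'(\tvr)(\vr-\tvr)$ directly, using monotonicity of $\vr\mapsto\vr^\g-\g\tvr^{\g-1}\vr$ (and of its analogue in $\tvr$) and then choosing $\delta$ small so that, e.g., $1+\frac{1}{\g-1}(\delta^\g-\delta\g)\ge\frac12$, whereas you rescale by $\g$-homogeneity ($\vr=t\tvr$ with $\tvr>0$, the case $\tvr=0$ being trivial) and invoke positivity on $[0,\delta]$ plus the growth $t^\g$ at infinity; both give $c\max\{\vr^\g,\tvr^\g\}$, and your route trades the explicit small-$\delta$ computation for a compactness argument. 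Using the integral form of the remainder instead of the Lagrange form (middle regime, and for $G$) is immaterial. Your hesitation about the constants in \eqref{pt-G} is well founded: the exact remainder is $\de(\eta-\teta)^2+\frac{kL}{2\hat\eta}(\eta-\teta)^2$, so what your computation produces --- $\de(\eta-\teta)^2$, then $\frac{kL(\eta-\teta)^2}{4\teta}$ for $\eta\le 2\teta$ and $kL(\log 2-\tfrac12)\,\eta$ for $\eta\ge 2\teta$ (note $\log 2-\tfrac12<\tfrac14$, so your ``$kL\eta/4$'' needs this small correction) --- is what is actually true; the paper's proof writes the remainder as $2\de(\eta-\teta)^2+kL\,\hat\eta^{-1}(\eta-\teta)^2$, a factor-$2$ slip, and the stated constants $2\de$, $kL/(2\teta)$, $kL/4$ are not attainable as written (take $L=0$, $\de>0$). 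As you observe, only the qualitative form of \eqref{pt-G} is used later (in \eqref{RR-new-2} and \eqref{pt-G-2}, up to harmless multiplicative constants), so proving the lemma with the corrected constants is entirely sufficient for the rest of the paper.
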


\begin{proof}[Proof of Lemma \ref{lem-H-G}]
We recall that $a>0, \ \g>1$. We use the definition of $H$ in \eqref{def-H-q-G} to obtain
\ba\label{pt-H2}
H(\vr) - H(\tilde \vr) - H'(\tilde \vr) (\vr-\tilde \vr) = \frac{a}{\g-1} \vr^\g - \frac{a\g}{\g-1} \tvr^{\g-1} \vr + a\tvr^\g.
\ea
Let $0 < \delta \leq 1/2$. We suppose $\vr \leq \delta \tvr$. Then
\ba\label{pt-H3}
&H(\vr) - H(\tilde \vr) - H'(\tilde \vr) (\vr-\tilde \vr)  =  a\tvr^\g + \frac{a}{\g-1} \left( \vr^\g - \g  \tvr^{\g-1} \vr\right) \\
& \geq a\tvr^\g + \frac{a}{\g-1} \big( (\delta\tvr)^\g - \g  \tvr^{\g-1} (\delta\tvr)\big) = a \tvr^\g \big(1 + \frac{1}{\g-1}(\delta^\g - \delta \g)\big),
\ea
where we used the fact that the function $f(\vr) := \vr^\g - \g  \tvr^{\g-1} \vr $ is decreasing for $\vr \in [0,\tvr]$. 
The limit
$
\lim_{\delta \to 0} (\delta^\g - \delta \g) =0
$
implies that there exists some $\delta\in (0,\frac 12)$ depending only on $\g$ such that
$$
1 + \frac{1}{\g-1}(\delta^\g - \delta \g)\geq \frac 12.
$$
Together with \eqref{pt-H3}, we obtain
\ba\label{pt-H4}
H(\vr) - H(\tilde \vr) - H'(\tilde \vr) (\vr-\tilde \vr) \geq  a \tvr^\g/2, \ \mbox{if $\vr\leq \delta \tvr$}.
\ea

We then consider the case  $\vr \geq \delta^{-1} \tvr$. By \eqref{pt-H2}, we have
\ba\label{pt-H5}
&H(\vr) - H(\tilde \vr) - H'(\tilde \vr) (\vr-\tilde \vr) = \frac{a}{\g-1} \vr^\g  + a \big(\tvr^\g - \frac{\g}{\g-1} \tvr^{\g-1} \vr \big) \\
& \geq \frac{a}{\g-1} \vr^\g +  a \big((\delta\vr)^\g - \frac{\g}{\g-1} (\delta\vr)^{\g-1} \vr \big) = \frac{a}{\g-1} \vr^\g \big( 1 +  (\g-1)\delta^\g - \g \delta^{\g-1} \big),\nn
\ea
where we used the fact that the function $g(\tvr) :=\tvr^\g - \frac{\g}{\g-1} \tvr^{\g-1} \vr $ is decreasing for $\tvr \in [0,\vr]$. The limit
$
\lim_{\delta\to 0} (\g-1)\delta^\g - \g \delta^{\g-1}  =0
$
implies for some $\delta>0$ small and determined by $\g$ that
$$
 1 +  (\g-1)\delta^\g - \g \delta^{\g-1} \geq \frac 12.
$$
Thus, for such fixed $\delta,$
\ba\label{pt-H6}
H(\vr) - H(\tilde \vr) - H'(\tilde \vr) (\vr-\tilde \vr) \geq  \frac{a}{2(\g-1)} \vr^\g, \ \mbox{if $\vr\geq \delta^{-1} \tvr$}.
\ea

Now we consider the case $ \delta \tvr \leq  \vr \leq \delta^{-1} \tvr$. By Taylor's formula, direct calculation gives
$$
H(\vr) - H(\tilde \vr) - H'(\tilde \vr) (\vr-\tilde \vr) = H''(\hat \vr) (\vr - \tvr)^2 = a \g \hat \vr ^{\g-2} (\vr - \tvr)^2
$$
for some $\hat \vr$ between $\vr$ and $\tvr$. Thus,
\ba\label{pt-H7}
H(\vr) - H(\tilde \vr) - H'(\tilde \vr) (\vr-\tilde \vr) \geq a \g \min\{\delta^{\g-2},\delta^{2-\g}\} \tvr^{\g-2} (\vr - \tvr)^2 , \ \mbox{if $\delta \tvr \leq  \vr \leq \delta^{-1} \tvr$}.
\ea

Summing up the estimates in \eqref{pt-H4}, \eqref{pt-H6} and \eqref{pt-H7} gives our desired result \eqref{pt-H}.

\medskip

By Taylor's formula, we have
$$
G(\eta) - G(\tilde \eta) - G'(\tilde \eta) (\eta-\tilde \eta) = 2\de (\eta-\teta)^2 + k L \,\hat\eta^{-1} (\eta-\teta)^2,
$$
for some $\hat \eta$ between $\eta$ and $\teta$. This implies directly our desired estimate \eqref{pt-G}.
\end{proof}

\medskip

We now state the relative entropy inequality in our setting.
\begin{proposition}\label{prop-entropy}
Let $T>0$ and $\O \subset \R^2$ be a $C^{2,\b}$ domain with $\b\in (0,1)$. Let $(\vr,\vu,\eta,\TT)$ be a finite energy weak solution in the sense of Definition \ref{def-weaksl-f} and obtained in Theorem \ref{thm-weak}. Let $\tilde\vr,\ \tilde\vu,\ \tilde\eta$ be smooth functions in $(t,x)\in [0,T]\times \overline\O$ with constrains
\be\label{cond-r-eta}
\tvu=0 \ \mbox{on} \ [0,T]\times \d\O, \quad \inf_{(0,T)\times \O }{\tvr} >0,\quad \inf_{(0,T)\times \O }{\teta} >0.\nn
\ee
Then there holds the following \emph{relative entropy inequality}: for any $t\in (0,T]$,
\ba\label{ineq-entropy}
& \CalE_1(\vr,\vu,\tilde\vr,\tilde \vu)(t)  + \CalE_2(\eta, \teta)(t) + \int_0^t  \int_\O \mu \left| \nabla_x (\vu-\tvu)  \right|^2 + \nu |\Div_x (\vu-\tvu) |^2\,\dx \,\dt' \\
& \qquad \qquad \qquad \qquad  \qquad  + 2 \e \int_0^t \int_\O 2 k L   |\nabla_x (\eta^{\frac12}-\teta^{\frac12}) |^2 +  \de\,  |\nabla_x (\eta-\teta)|^2 \,\dx\,\dt'\\
&\quad \leq \CalE_1(\vr_0,\vu_0,\tilde\vr_0,\tilde \vu_0)  + \CalE_2(\eta_0, \teta_0) + \int_0^t \RR (t') \,\dt',
\ea
where $(\vr_0,\vu_0,\tilde\vr_0,\tilde \vu_0,\eta_0,\teta_0)$ denotes the corresponding initial values and $\RR(t) = \sum_{j=1}^5\RR_j(t)$ with
\ba\label{Rj-def}
\RR_1(t) & : = \int_\O \vr(\d_t \tvu + \vu \cdot \nabla_x \tvu)\cdot (\tvu-\vu)\,\dx\\
& + \int_\O \mu \nabla_x \tvu :\nabla_x (\tvu-\vu) + \nu \Div_x \tvu \,\Div_x (\tvu-\vu)\,\dx + \int_\O \vr \ff\cdot(\vu-\tvu)\,\dx\\
& + \int_\O (\tvr-\vr) \d_t H'(\tvr) + (\tvr\tvu -\vr\vu) \cdot \nabla_x H'(\tvr)\,\dx + \int_\O \Div_x \tvu (p(\tvr)-p(\vr))\,\dx,\\
\RR_2(t)&: =   \int_\O (\teta-\eta) \d_t G'(\teta) + (\teta\tvu -\eta\vu) \cdot \nabla_x G'(\teta)\,\dx + \int_\O \Div_x \tvu (q(\teta)-q(\eta))\,\dx,\\
\RR_3(t)&: = - 4 \e k L \int_\O  \nabla_x \teta^{\frac 12} \cdot \nabla_x (\eta^{\frac 12}-\teta^{\frac 12}) + \nabla_x \eta^{\frac 12} \cdot \nabla_x \teta^{\frac 12}(1-\teta^{-\frac 12}\eta^{\frac 12})\,\dx,\\
\RR_4(t)&: = - 2 \e \de \int_\O  \nabla_x \teta \cdot \nabla_x (\eta-\teta) \,\dx,\\
\RR_5(t)&: =   \int_\O \TT : \nabla_x (\tvu-\vu)\,\dx.
\ea

\end{proposition}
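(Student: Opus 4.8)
I would run the relative‑entropy computation of Germain and Feireisl--Jin--Novotn\'y, modified to carry along $\eta$ (and arranged so that $\TT$ is \emph{not} carried). Using $H'(s)s-H(s)=p(s)$ and $G'(s)s-G(s)=q(s)$ from \eqref{pt-H-q-G}, split
$$\CalE_1(t)+\CalE_2(t)=A(t)+B(t),\qquad A(t):=\int_\O\Big(\tfrac12\vr|\vu|^2+H(\vr)+G(\eta)\Big)(t,\cdot)\,\dx,$$
$$B(t):=\int_\O\Big(\tfrac12\vr|\tvu|^2-\vr\vu\cdot\tvu-H'(\tvr)\vr+p(\tvr)-G'(\teta)\eta+q(\teta)\Big)(t,\cdot)\,\dx,$$
so that $A(0)+B(0)=\CalE_1(\vr_0,\vu_0,\tvr_0,\tvu_0)+\CalE_2(\eta_0,\teta_0)$. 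The plan is to bound $A(t)$ from above by a reduced energy inequality and to compute $B(t)-B(0)$ exactly from the weak formulations, then add.

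\textbf{A reduced energy inequality.} The energy inequality \eqref{energy1-f} carries $\tfrac12\tr\TT$, $\tfrac1{4\lambda}\int_0^t\!\!\int_\O\tr\TT$ and $\tfrac{kd}{4\lambda}\int_0^t\!\!\int_\O\eta$, which do not occur in $\CalE_1+\CalE_2$. To eliminate them I would test the $\TT$-equation \eqref{weak-form4-f} with the constant matrix $\YY=\II$ (admissible, since no boundary condition is imposed on the test matrix); using $(\nabla_x\vu\,\TT+\TT\,\nabla_x^{\rm T}\vu):\II=2\,\TT:\nabla_x\vu$ this yields
$$\tfrac12\int_\O\tr\TT(t,\cdot)\,\dx-\tfrac12\int_\O\tr\TT_0\,\dx+\tfrac1{4\lambda}\int_0^t\!\!\int_\O\tr\TT\,\dx\,\dt'=\int_0^t\!\!\int_\O\TT:\nabla_x\vu\,\dx\,\dt'+\tfrac{kd}{4\lambda}\int_0^t\!\!\int_\O\eta\,\dx\,\dt'.$$
Subtracting this identity from \eqref{energy1-f} cancels exactly the unwanted terms and gives $A(t)+\int_0^t\!\!\int_\O(\mu|\nabla_x\vu|^2+\nu|\Div_x\vu|^2)+2\e\int_0^t\!\!\int_\O(2kL|\nabla_x\eta^{\frac12}|^2+\de|\nabla_x\eta|^2)\le A(0)+\int_0^t\!\!\int_\O\vr\ff\cdot\vu-\int_0^t\!\!\int_\O\TT:\nabla_x\vu$; the last term is precisely what will pair with $+\TT:\nabla_x\tvu$ coming from the momentum equation to form $\RR_5$, and this is the structural reason $\TT$ may be omitted from the relative entropy.

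\textbf{Evolution of $B$ and collection.} Since $\tvr,\teta$ are smooth with positive lower bounds and $\tvu$ is smooth with $\tvu|_{\partial\Omega}=0$, the scalars $\tfrac12|\tvu|^2$, $H'(\tvr)$, $G'(\teta)$ are admissible test functions in \eqref{weak-form1-f}--\eqref{weak-form2-f}, and $\tvu$ is admissible in \eqref{weak-form3-f} after a routine density argument (using $\vu\in L^2(0,T;W^{1,2}_0)$ and the energy-class integrability of $\vr\vu\otimes\vu$, $p(\vr)$, $q(\eta)$, $\TT$). Feeding $\phi=\tfrac12|\tvu|^2$ and $\phi=H'(\tvr)$ into \eqref{weak-form1-f}, $\phi=G'(\teta)$ into \eqref{weak-form2-f}, $\vvarphi=\tvu$ into \eqref{weak-form3-f}, together with the trivial $\tfrac{d}{dt}\int_\O p(\tvr)=\int_\O p'(\tvr)\d_t\tvr$ and $\tfrac{d}{dt}\int_\O q(\teta)=\int_\O q'(\teta)\d_t\teta$, produces an explicit formula for $B(t)-B(0)$. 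Adding this to the reduced energy inequality and reorganizing term by term: the viscous terms combine via $\int_\O\SSS(\nabla_x\vu):\nabla_x\tvu=\int_\O(\mu\nabla_x\vu:\nabla_x\tvu+\nu\Div_x\vu\,\Div_x\tvu)$ (integration by parts, $\tvu|_{\partial\Omega}=0$) and, after completing the square, give $-\int_0^t\!\!\int_\O(\mu|\nabla_x(\vu-\tvu)|^2+\nu|\Div_x(\vu-\tvu)|^2)$ plus the second line of $\RR_1$; the pair $-\TT:\nabla_x\vu$, $+\TT:\nabla_x\tvu$ gives $\RR_5$; the force terms give $\int_\O\vr\ff\cdot(\vu-\tvu)$; the convective terms assemble, using $\int_\O\vr\vu\cdot\nabla_x(\tfrac12|\tvu|^2)-\int_\O(\vr\vu\otimes\vu):\nabla_x\tvu=\int_\O\vr(\vu\cdot\nabla_x\tvu)\cdot(\tvu-\vu)$, into the first line of $\RR_1$; the density/pressure terms, using $sH''(s)=p'(s)$, $\tvr\,\nabla_x H'(\tvr)=\nabla_x p(\tvr)$ and one more integration by parts, give the last line of $\RR_1$, and the parallel polymer-pressure terms, using $sG''(s)=q'(s)$, give $\RR_2$; finally the stress-diffusion terms $-2\e\int_0^t\!\!\int_\O(2kL|\nabla_x\eta^{\frac12}|^2+\de|\nabla_x\eta|^2)$ together with $\e\int_0^t\!\!\int_\O\nabla_x\eta\cdot\nabla_x G'(\teta)$ (from $\phi=G'(\teta)$), after rewriting $\nabla_x\eta\cdot\nabla_x\teta=4\,\eta^{\frac12}\teta^{\frac12}\,\nabla_x\eta^{\frac12}\cdot\nabla_x\teta^{\frac12}$ and completing the square, rearrange into $-2\e\int_0^t\!\!\int_\O(2kL|\nabla_x(\eta^{\frac12}-\teta^{\frac12})|^2+\de|\nabla_x(\eta-\teta)|^2)$ plus $\RR_3+\RR_4$. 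Moving the four dissipation integrals to the left gives \eqref{ineq-entropy}.

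\textbf{Main difficulty.} The computation is routine; the real work is the bookkeeping — checking that after all cancellations precisely $\RR_1,\dots,\RR_5$ survive. Two points require genuine care: (i) the reduced energy inequality, whose crux is that testing the $\TT$-equation with $\II$ removes exactly the troublesome $\tfrac12\tr\TT$ and $\tfrac{kd}{4\lambda}\int_\O\eta$ contributions, which is what makes it possible to leave $\TT$ out of the relative entropy altogether; and (ii) the stress-diffusion rearrangement producing $\RR_3$, where the singular weight hidden in $\nabla_x G'(\teta)=kL\,\teta^{-1}\nabla_x\teta+2\de\,\nabla_x\teta$ must be absorbed by passing to the $\eta^{\frac12},\teta^{\frac12}$ variables — here the lower bound on $\teta$ (inherited from $\inf_\O\teta_0>0$) is exactly what keeps all the integrals finite. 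The only remaining technical issue is the density argument legitimizing $\tvu$ as a test function in \eqref{weak-form3-f}, which is standard.
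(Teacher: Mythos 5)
Your proposal is correct and follows essentially the same route as the paper: the paper likewise tests the continuity equation with $\tfrac12|\tvu|^2$ and $H'(\tvr)$, the $\eta$-equation with $G'(\teta)$, the momentum equation with $\tvu$, combines with the energy inequality, cancels the $\tr\TT$ and $\tfrac{k}{2\lambda}\eta$ contributions by testing \eqref{weak-form4-f} with the constant matrix $\YY=\II/2$ (your $\YY=\II$ divided by two), and rearranges the $\eta$-diffusion terms in the $\eta^{1/2},\teta^{1/2}$ variables to produce $\RR_3,\RR_4$. The only differences are organizational (your separate ``reduced energy inequality'' versus the paper's single combined chain), so no further comment is needed.
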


\begin{proof}[Proof of Proposition \ref{prop-entropy}]
We calculate
\be\label{vr-vu-tvu1}
\int_\O \frac{1}{2} \vr |\vu-\tilde \vu|^2 \,\dx = \int_\O \frac{1}{2} \vr |\vu|^2 \,\dx - \int_\O \vr \vu\cdot \tilde \vu \,\dx + \int_\O \frac{1}{2} \vr |\tvu|^2 \,\dx,
\ee
where for the second and third terms we have, by taking $\tvu$ as a test function in the weak formulation of the momentum equation \eqref{weak-form3-f} and taking $\frac{1}{2} |\tvu|^2$ as a test function in the weak formulation of the continuity equation \eqref{weak-form1-f}, that
\ba\label{vr-vu-tvu2}
 &\int_\O \vr \vu\cdot \tilde \vu \,\dx  =  \intO{ \vr_{0} \vu_{0} \cdot \tvu_0} + \int_0^t \intO{ \big[ \vr \vu \cdot \partial_t \tvu + (\vr \vu \otimes \vu) : \Grad \tvu  + p(\vr)\, \Div_x \tvu \\
&\quad + q(\eta)\, \Div_x \tvu  - (\mu \nabla_x \vu : \Grad \tvu + \nu \Div_x \vu \,\Div_x \tvu )- \TT : \nabla_x\tvu + \vr\, \ff \cdot \tvu \big] }\, \dt'
\ea
and
\ba\label{vr-vu-tvu3}
 \int_\O \frac{1}{2}\vr| \tvu|^2 \,\dx 
 = \int_\O \frac{1}{2}\vr_0 | \tvu_0|^2 \,\dx  + \int_0^t \int_\O  \vr \tvu \cdot \d_t \tvu + \vr \vu \cdot \nabla_x \tvu \cdot \tvu \,\dx\,\dt'.
\ea

Similarly, testing the continuity equation by $H'(\tvr)$ gives
\ba\label{vr-vu-tvu4}
 \int_\O \vr H'(\tvr) \,\dx & = \int_\O \vr_0 H'(\tvr_0) \,\dx  + \int_0^t \int_\O  \vr \d_t H'(\tvr) + \vr \vu \cdot \nabla_x H'(\tvr) \,\dx\,\dt'.
\ea

Plugging \eqref{vr-vu-tvu2} and \eqref{vr-vu-tvu3} into \eqref{vr-vu-tvu1} and using \eqref{vr-vu-tvu4} and the energy inequality \eqref{energy1-f} gives
\ba\label{vr-vu-tvu5}
&\int_\O \frac{1}{2} \vr |\vu-\tilde \vu|^2 + \big(H(\vr) - H'(\tvr) \vr\big)\, \dx + \int_0^t  \int_\O \mu \left| \nabla_x (\vu-\tvu)  \right|^2 + \nu |\Div_x (\vu-\tvu) |^2\,\dx \,\dt' \\
& \quad +\int_\O  G(\eta) + \frac{1}{2}\tr \left(\TT \right)\dx + 2\e\int_0^t \int_\O 2k L   |\nabla_x \eta^{\frac12}|^2 +  \de\,  |\nabla_x \eta|^2 \,\dx\,\dt'  + \frac{1}{4\l} \int_0^t  \int_\O \tr\left(\TT\right) \dx \,\dt' \\
&\leq \int_\O \frac{1}{2} \vr_0 |\vu_0-\tilde \vu_0|^2 + \big( H(\vr_0) - H'(\tvr_0) \vr_0 \big)\dx + \int_\O  G(\eta_0)+ \frac{1}{2}\tr \left(\TT_0\right) \dx + \frac{k }{2\lambda} \int_0^t \int_\O \eta  \,\dx \,\dt'\\
&\quad + \int_0^t \int_\O \vr\,\ff \cdot  (\vu-\tvu) \,\dx\,\dt'  + \int_0^t\int_\O \mu \nabla_x \tvu : \nabla_x (\tvu-\vu) +\nu \Div_x \tvu \,\Div(\tvu-\vu)\,\dx\,\dt' \\
&\quad - \int_0^t \intO{ \big[ \vr \vu \cdot (\partial_t \tvu +  \vu\cdot \Grad \tvu)  + p(\vr)\, \Div_x \tvu + q(\eta)\, \Div_x \tvu  - \TT : \nabla_x\tvu \big] }\, \dt'\\
&\quad + \int_0^t \int_\O  \vr \tvu \cdot \d_t \tvu + \vr \vu \cdot \nabla_x \tvu \cdot \tvu \,\dx\,\dt' - \int_0^t \int_\O  \vr \d_t H'(\tvr) + \vr \vu \cdot \nabla_x H'(\tvr) \,\dx\,\dt'.
\ea

By \eqref{pt-H-q-G}, we have
\ba\label{pt-H8}
&\vr \d_t H'(\tvr) + \vr \vu \cdot \nabla_x H'(\tvr)\\
 &= \tvr \d_t H'(\tvr) + \tvr \tvu \cdot \nabla_x H'(\tvr) + (\vr-\tvr) \d_t H'(\tvr) + (\vr \vu -\tvr\tvu) \cdot \nabla_x H'(\tvr)\\
&= (\vr-\tvr) \d_t H'(\tvr) + (\vr \vu -\tvr\tvu) \cdot \nabla_x H'(\tvr) + \d_t p(\tvr) + \tvu\cdot \nabla_x p(\tvr) \\
 &= (\vr-\tvr) \d_t H'(\tvr) + (\vr \vu -\tvr\tvu) \cdot \nabla_x H'(\tvr) + \d_t p(\tvr) + \Div_x (p(\tvr)\tvu) - p(\tvr) \Div \tvu.
\ea
Taking test function $\YY= \II/2$ in the weak formulation \eqref{weak-form4-f} implies
\ba\label{a-priori-tau}
 \int_\O \frac{1}{2}\tr (\TT) \,\dx  + \frac{1}{4\l}\int_0^t \int_\O \tr\left(\TT
\right) \,\dx\,\dt'
=  \int_\O \frac{1}{2}\tr \left(\TT_0\right) \,\dx + \int_0^t \int_\O \frac{k}{2\lambda} \,\eta  +  \TT : \nabla_x \vu \,\dx\,\dt'.
\ea

Using \eqref{pt-H8} and \eqref{a-priori-tau} in \eqref{vr-vu-tvu5}, together with the fact $p(\tvr) = H'(\tvr)\tvr - H(\tvr)$, implies
\ba\label{vr-vu-tvu6}
&\CalE_1(t) +\int_\O  G(\eta) \dx + \int_0^t  \int_\O \mu \left| \nabla_x (\vu-\tvu)  \right|^2 + \nu |\Div_x (\vu-\tvu) |^2\,\dx \,\dt'   \\
&\qquad + 2\e\int_0^t \int_\O 2k L   |\nabla_x \eta^{\frac12}|^2 +  \de  |\nabla_x \eta|^2 \,\dx\,\dt'  \\
&\leq \CalE_1(0) + \int_\O  G(\eta_0)\dx + \int_0^t \int_\O \vr\,\ff \cdot  (\vu-\tvu) \,\dx\,\dt'  \\
&\quad + \int_0^t\int_\O \mu \nabla_x \tvu : \nabla_x (\tvu-\vu) +\nu \Div_x \tvu \,\Div(\tvu-\vu)\,\dx\,\dt'  - \int_0^t \intO{  q(\eta)\, \Div_x \tvu   }\, \dt'\\
&\quad + \int_0^t \intO{ \big[  p(\tvr)- p(\vr)\big] \Div_x \tvu}\,\dt'- \int_0^t \int_\O (\vr-\tvr) \d_t H'(\tvr) + (\vr \vu -\tvr\tvu) \cdot \nabla_x H'(\tvr) \,\dx\,\dt' \\
&\quad + \int_0^t \int_\O  \vr  (\d_t \tvu +  \vu \cdot \nabla_x \tvu)\cdot (\tvu-\vu)\,\dx\,\dt' + \int_0^t \intO{ \TT : \nabla_x(\vu-\tvu) }\, \dt'.
\ea

\medskip

Now we include $\CalE_2= \CalE_2(\eta,\teta)$. We take $G'(\teta)$ as a test function in \eqref{weak-form2-f} to obtain
\be\label{eta-eta1}
\intO{ \eta G'(\teta) }  = \intO{ \eta_{0} G'(\teta_0) } + \int_0^t \intO{ \big[ \eta \partial_t G'(\teta)  + \eta \vu \cdot \Grad G'(\teta)  - \e \nabla_x\eta \cdot \nabla_x G'(\teta)  \big]} \, \dt' .
\ee
We deduce from \eqref{pt-H-q-G} that
\ba\label{eta-eta2}
&\eta \d_t G'(\teta) + \eta \vu \cdot \nabla_x G'(\teta)\\
& = \teta \d_t G'(\teta) + \teta \tvu \cdot \nabla_x G'(\teta) + (\eta-\teta) \d_t G'(\teta) + (\eta \vu -\teta\tvu) \cdot \nabla_x G'(\teta)\\
&= (\eta-\teta) \d_t G'(\teta) + (\eta \vu -\teta\tvu) \cdot \nabla_x G'(\teta) + \d_t q(\teta) + \tvu\cdot \nabla_x q(\teta) \\
 &= (\eta-\teta) \d_t G'(\teta) + (\eta \vu -\teta\tvu) \cdot \nabla_x G'(\teta) + \d_t q(\teta) + \Div_x (q(\teta)\tvu) - q(\teta) \Div \tvu.
\ea
By \eqref{vr-vu-tvu6}, \eqref{eta-eta1} and \eqref{eta-eta2}, and the fact $q(\teta)=G'(\teta) \teta - G(\teta)$, we have
\ba\label{vr-vu-tvu-eta1}
&\CalE_1(t) + \CalE_2(t) + \int_0^t  \int_\O \mu \left| \nabla_x (\vu-\tvu)  \right|^2 + \nu |\Div_x (\vu-\tvu) |^2\,\dx \,\dt' \\
&\quad + 2\e\int_0^t \int_\O 2k L   |\nabla_x \eta^{\frac12}|^2 +  \de\,  |\nabla_x \eta|^2 \,\dx\,\dt' \\
&\leq \CalE_1(0) +\CalE_2(0)  + \int_0^t \int_\O \vr\,\ff \cdot  (\vu-\tvu) \,\dx\,\dt'  \\
&\quad + \int_0^t\int_\O \mu \nabla_x \tvu : \nabla_x (\tvu-\vu) +\nu \Div_x \tvu \,\Div(\tvu-\vu)\,\dx\,\dt' \\
&\quad + \int_0^t \intO{ \big[  p(\tvr)- p(\vr)\big] \Div_x \tvu}\dt'- \int_0^t \int_\O (\vr-\tvr) \d_t H'(\tvr) + (\vr \vu -\tvr\tvu) \cdot \nabla_x H'(\tvr) \,\dx\,\dt' \\
&\quad + \int_0^t \intO{  \big[q(\teta)-q(\eta)\big]\, \Div_x \tvu   }\, \dt' - \int_0^t \intO{ (\eta-\teta) \d_t G'(\teta) + (\eta \vu -\teta\tvu) \cdot \nabla_x G'(\teta)  }\, \dt'\\
&\quad + \int_0^t \int_\O  \vr  (\d_t \tvu +  \vu \cdot \nabla_x \tvu)\cdot (\tvu-\vu)\,\dx\,\dt' + \int_0^t \intO{ \TT : \nabla_x(\vu-\tvu) }\, \dt'\\
&\quad + \int_0^t \intO{  \e \nabla_x\eta \cdot \nabla_x G'(\teta) } \, \dt'.
\ea

By \eqref{def-H-q-G}, there holds
\be\label{eta-eta3}
\nabla_x \eta \cdot \nabla_x G'(\teta) = \nabla_x \eta \cdot (kL \tilde\eta^{-1} + 2\de) \nabla_x \tilde\eta = k L  \tilde\eta^{-1} \nabla_x \eta \cdot \nabla_x \tilde\eta + 2\de \nabla_x \eta  \cdot \nabla_x \tilde\eta.
\ee
We then calculate:
\be\label{eta-eta4}
4|\nabla_x \eta^{\frac12}|^2 - \tilde\eta^{-1} \nabla_x \eta \cdot \nabla_x \tilde\eta = 4|\nabla_x (\eta^{\frac12}-\teta^{\frac 12})|^2 + 4 \nabla_x \teta^{\frac{1}{2}} \cdot \nabla_x(\eta^{\frac12}-\teta^{\frac 12}) + 4 \nabla_x \eta^{\frac12}\cdot \nabla_x \teta^{\frac 12}(1-\eta^{\frac12}\teta^{-\frac 12}),
\ee
and
\be\label{eta-eta5}
|\nabla_x \eta|^2 -  \nabla_x \eta  \cdot \nabla_x \tilde\eta = |\nabla_x (\eta-\tilde\eta)|^2 + \nabla_x \tilde\eta \cdot \nabla_x (\eta-\tilde\eta).
\ee
Finally, plugging \eqref{eta-eta3}--\eqref{eta-eta5} into \eqref{vr-vu-tvu-eta1} gives our desired inequality \eqref{ineq-entropy}.

\end{proof}

\begin{remark} By the proof of Proposition \ref{prop-entropy}, we see that the regularity constrains on $(\tvr,\tvu,\teta)$ can be relaxed accordingly, as long as all the integrals in \eqref{ineq-entropy} make sense.

\end{remark}

\section{Weak-strong uniqueness}\label{sec-thm-ws-unique}

This section is devoted to proving Theorem \ref{thm-ws}. We shall employ the relative entropy inequality shown in the last section to achieve such a goal. Let $(\vr, \vu, \eta, \TT)$ be the finite energy weak solution obtained in Theorem \ref{thm-weak} and $(\tvr, \tvu, \teta, \tTT)$ be the strong solution obtained in Theorem \ref{thm-strong} with the same initial data satisfying the lower bound constrain \eqref{bound-lower}. Then for any $T<T_*$, by the continuity equation \eqref{01a}, we have
\ba\label{lower-vr-eta}
   \inf_{[0, T]\times \O} \tvr  \geq   e^{-\int_0^{T} \|\Div_x \tvu (t)\|_{L^\infty(\O)}\dt}  \inf_{\O} \vr_0 >0,\quad \inf_{[0, T]\times \O} \teta \geq  e^{-\int_0^{T} \|\Div_x \tvu(t)\|_{L^\infty(\O)}\dt}  \inf_{\O} \eta_0 >0.
\ea


Let $T<T_*$ be arbitrary and fixed. In the rest of this section we restrict $t\in [0,T]$. Thus, by \eqref{lower-vr-eta}, we can choose $(\tvr,\tvu,\teta)$ as the relative functions in the entropy inequality \eqref{ineq-entropy}. We then analyze the corresponding right-hand side of \eqref{ineq-entropy} until some level that allows us to use Gronwall type inequalities to show the relative entropy is identically zero, which implies the weak solution and the strong one are equal. This is done in the rest of this section step by step.

\subsection{A new expression for the remainder}

Since $(\tvr,\tvu,\teta,\tTT)$ is the strong solution to \eqref{01a}--\eqref{07a} satisfying \eqref{lower-vr-eta}, we have
\be\label{eq-tvu}
\d_t \tvu + \tvu \cdot \nabla_x \tvu  + \tvr^{-1}\nabla_x p(\tvr) - \tvr^{-1}( \mu \Delta_x \tvu + \nu\nabla_x \Div_x \tvu) = \tvr^{-1} \Div_x \tTT - \tvr^{-1} \nabla_x q(\teta)  + \ff.
\ee
Plugging \eqref{eq-tvu} into $\RR_1$ in \eqref{Rj-def}, together with the fact $\tvr^{-1}\nabla_x p(\tvr)=\nabla_x H'(\tvr)$, gives
\ba\label{R1-1}
\RR_1 & =  \int_\O \vr (\vu-\tvu) \cdot \nabla_x \tvu \cdot (\tvu-\vu)\,\dx + \int_\O (\mu \Delta_x \tvu + \nu \nabla_x \Div_x \tvu) (\tvr^{-1}\vr-1) \cdot (\tvu-\vu) \,\dx \\
& - \int_\O \vr \nabla_x H'(\tvr) \cdot (\tvu-\vu)\,\dx  - \int_\O \vr \tvr^{-1} \nabla_x q(\teta) \cdot (\tvu-\vu)\,\dx + \int_\O \vr \tvr^{-1} \Div_x \tTT \cdot (\tvu-\vu)\,\dx\\
& + \int_\O (\tvr-\vr) \d_t H'(\tvr) + (\tvr\tvu -\vr\vu) \cdot \nabla_x H'(\tvr)\,\dx + \int_\O \Div_x \tvu (p(\tvr)-p(\vr))\,\dx.
\nn
\ea
By the continuity equation \eqref{01a}, we have
\ba\label{R1-2}
 & - \vr \nabla_x H'(\tvr) \cdot (\tvu-\vu) +  (\tvr-\vr) \d_t H'(\tvr) + (\tvr\tvu -\vr\vu) \cdot \nabla_x H'(\tvr)\\
 &= (\tvr-\vr) \left( \d_t H'(\tvr) + \tvu \cdot \nabla_x H'(\tvr)\right) \\
 &= (\tvr-\vr) \left[ \d_t H'(\tvr) + \Div_x(\tvu H'(\tvr)) + (H''(\tvr)\tvr - H'(\tvr)) \Div_x \tvu\right] -  (\tvr-\vr) H''(\tvr)\tvr \Div_x \tvu \\
 &=-(\tvr-\vr) p'(\tvr) \Div_x \tvu.
\ea

Then we can write $\RR_1: =\sum_{j=1}^6\RR_{1,j}$ with
\ba\label{R1-3}
 &\RR_{1,1}:= \int_\O \vr (\vu-\tvu) \cdot \nabla_x \tvu \cdot (\tvu-\vu)\,\dx,\\
 & \RR_{1,2}:= \int_\O (\mu \Delta_x \tvu + \nu \nabla_x \Div_x \tvu) \tvr^{-1} (\vr-\tvr) \cdot (\tvu-\vu) \,\dx,\\
 &\RR_{1,3}:=  \int_\O \Div_x \tvu \big(p(\tvr)-p(\vr)- p'(\tvr)(\tvr-\vr) \big)\,\dx, \\
 &\RR_{1,4}:= \int_\O\tvr^{-1} (\tvr-\vr ) \nabla_x q(\teta) \cdot (\tvu-\vu)\,\dx + \int_\O \tvr^{-1} (\vr  - \tvr) \Div_x \tTT \cdot (\tvu-\vu)\,\dx,\\
  &\RR_{1,5}:= -\int_\O \nabla_x q(\teta) \cdot (\tvu-\vu)\,\dx = -\int_\O \teta \nabla_x G'(\teta) \cdot (\tvu-\vu)\,\dx ,\\
   &\RR_{1,6}:=  \int_\O  \Div_x \tTT \cdot (\tvu-\vu)\,\dx = -\int_\O   \tTT : \nabla_x (\tvu-\vu)\,\dx.
\ea
By \eqref{Rj-def}, we have
\ba\label{R156-R2-R5}
&\RR_{1,5}+ \RR_2= \int_\O (\teta-\eta) \left(\d_t G'(\teta) + \tvu  \cdot \nabla_x G'(\teta)\right)\,\dx + \int_\O \Div_x \tvu (q(\teta) - q(\eta))\,\dx,\\
&\RR_{1,6}+ \RR_5 = \int_\O  (\TT - \tTT) : \nabla_x (\tvu-\vu)\,\dx.
\ea

By equation \eqref{03a} and similar calculations as in \eqref{R1-2},
\ba\label{R15-R2}
\RR_{1,5}+ \RR_2 & =  \int_\O \Div_x \tvu \big(q(\teta)-q(\eta) - q'(\teta) (\teta -\eta)\big)\,\dx + \e \int_\O (\tilde \eta -\eta)\tilde \eta^{-1} q'(\tilde\eta) \Delta_x\tilde \eta\,\dx\\
& = \RR_{2,1} + \e \int_\O (\tilde \eta -\eta) (k L \tilde\eta^{-1} + 2\de) \Delta_x\tilde \eta\,\dx,
\ea
where
\be\label{def-R21}
\RR_{2,1}:= \int_\O \Div_x \tvu \big(q(\teta)-q(\eta) - q'(\teta) (\teta -\eta)\big)\,\dx.
\ee
 Thus, direct calculation gives
\ba\label{R15-R2-R4}
&\RR_{1,5}+ \RR_2 + \RR_3 + \RR_4  = \RR_{2,1} + \e k L \int_\O (\tilde \eta -\eta) \tilde\eta^{-1} \Delta_x\tilde \eta\,\dx\\
 & \quad - 4 \e k L \int_\O  \nabla_x \teta^{\frac 12} \cdot \nabla_x (\eta^{\frac 12}-\teta^{\frac 12}) + \nabla_x \eta^{\frac 12} \cdot \nabla_x \teta^{\frac 12}(1-\teta^{-\frac 12}\eta^{\frac 12})\,\dx\\
 & = \RR_{2,1} +   \e k L \int_\O 4 \teta^{-\frac 12} ( \teta^{\frac 12} - \eta^{\frac 12}) \nabla_x \teta^{\frac 12} \cdot \nabla_x (\teta^{\frac 12}-\eta^{\frac 12}) - \teta^{-1} \Delta_x \teta (\eta^{\frac 12}-\teta^{\frac 12})^2\,\dx.
\ea
Summarizing the calculations in \eqref{R1-3}--\eqref{R15-R2-R4}, we obtain a new expression for the remainder $\RR$:
\ba\label{RR-new}
\RR &= \int_\O \vr (\vu-\tvu) \cdot \nabla_x \tvu \cdot (\tvu-\vu)\,\dx + \int_\O (\mu \Delta_x \tvu + \nu \nabla_x \Div_x \tvu) \tvr^{-1} (\vr-\tvr) \cdot (\tvu-\vu) \,\dx \\
&\quad + \int_\O \Div_x \tvu \big(p(\tvr)-p(\vr)- p'(\tvr)(\tvr-\vr) \big)\,\dx + \int_\O \Div_x \tvu \big(q(\teta)-q(\eta) - q'(\teta) (\teta -\eta)\big)\,\dx \\
&\quad + \int_\O\tvr^{-1} (\tvr - \vr) \nabla_x q(\teta) \cdot (\tvu-\vu)\,\dx + \int_\O \tvr^{-1} (\vr  - \tvr) \Div_x \tTT \cdot (\tvu-\vu)\,\dx\\
&\quad  +  \e k L \int_\O 4 \teta^{-\frac 12} ( \teta^{\frac 12} - \eta^{\frac 12}) \nabla_x \teta^{\frac 12} \cdot \nabla_x (\teta^{\frac 12}-\eta^{\frac 12}) - \teta^{-1} \Delta_x \teta (\eta^{\frac 12}-\teta^{\frac 12})^2\,\dx \\
&\quad + \int_\O  (\TT - \tTT) : \nabla_x (\tvu-\vu)\,\dx.
\ea

\subsection{Estimate for the remainder}\label{sec-est-RR}

We estimate the right-hand side of \eqref{RR-new} term by term. For notation convenience, let $\zeta(t)$ be some nonnegative function that is integrable over $[0,T]$; its value may differ from line to line.

By \eqref{est-strong} and Sobolev embedding $W^{2,6}(\O)\subset W^{1,\infty}(\O)$, we have
\ba\label{RR-new-1}
\int_\O \vr (\vu-\tvu) \cdot \nabla_x \tvu \cdot (\tvu-\vu)\,\dx  \leq \|\nabla_x \tvu(t)\|_{L^\infty(\O)} \int_\O \vr |\vu-\tvu|^2\,\dx \leq  \zeta(t) \CalE_1(t).
\ea

\medskip

By \eqref{est-strong} and Sobolev embedding $W^{2,6}(\O)\subset W^{1,\infty}(\O)$, we have $\Div_x \tvu\in L^2(0,T;L^\infty(\O))$. Together with the fact
\ba
&p(\tvr)-p(\vr)- p'(\tvr)(\tvr-\vr) = (\g-1)^{-1} \big[ H(\tvr)-H(\vr)- H'(\tvr)(\tvr-\vr)\big], \\
&q(\teta)-q(\eta) - q'(\teta) (\teta -\eta) = 2\de (\teta-\eta)^2 \leq G(\eta)-G(\teta) - G'(\teta) (\eta-\teta),
\nn
\ea
we thus have
\ba\label{RR-new-2}
&\int_\O \Div_x \tvu \big(p(\tvr)-p(\vr)- p'(\tvr)(\tvr-\vr) \big)\,\dx + \int_\O \Div_x \tvu \big(q(\teta)-q(\eta) - q'(\teta) (\teta -\eta)\big)\,\dx \\
&\quad \leq \zeta(t) \big(\CalE_1(t) + \CalE_2(t)\big).
\ea

\medskip

By \eqref{est-strong} (or by the argument of proving \eqref{est-eta-new-2} using Lemma \ref{lem-parabolic-1}), we have, by Sobolev embedding, for any $r\in (2,\infty)$,
\ba\label{est-teta-1}
\teta\in  L^\infty (0,T; W^{1,r}) \cap L^2(0,T; W^{2,r})(\O) \subset L^\infty(0,T; L^{\infty})\cap L^2(0,T; W^{1,\infty}(\O)).\nn
\ea
Together with the lower bound for $\teta$ in \eqref{lower-vr-eta}, we have
\ba\label{RR-new-3}
&\e k L \int_\O 4 \teta^{-\frac 12} ( \teta^{\frac 12} - \eta^{\frac 12}) \nabla_x \teta^{\frac 12} \cdot \nabla_x (\teta^{\frac 12}-\eta^{\frac 12})\,\dx \\
&\leq 2 \e k L \|\teta^{-1}(t)\|_{L^\infty(\O)} \|\nabla_x \teta(t)\|_{L^\infty(\O)} \|(\teta^{\frac 12} - \eta^{\frac 12})(t)\|_{L^2(\O)} \|\nabla_x (\teta^{\frac 12} - \eta^{\frac 12})(t)\|_{L^2(\O)}\\
&\leq 4 \e k L \|\teta^{-1}(t)\|^2_{L^\infty(\O)} \|\nabla_x \teta(t)\|^2_{L^\infty(\O)} \|(\teta^{\frac 12} - \eta^{\frac 12})(t)\|^2_{L^2(\O)} + \e k L\|\nabla_x (\teta^{\frac 12} - \eta^{\frac 12})(t)\|_{L^2(\O)}^2\\
& \leq  \zeta(t)  \|(\teta^{\frac 12} - \eta^{\frac 12})(t)\|^2_{L^2(\O)}  + \e k L\|\nabla_x (\teta^{\frac 12} - \eta^{\frac 12})(t)\|_{L^2(\O)}^2.
\ea
By \eqref{pt-G}, we have the bound
\be\label{pt-G-2}
(\teta^{\frac 12} - \eta^{\frac 12})^2 \leq 4 \left( G(\eta) - G(\tilde \eta) - G'(\tilde \eta) (\eta-\tilde \eta)\right),
\ee
which is actually uniform in $\eta, \teta \in (0,\infty)$. Together with \eqref{RR-new-3}, we have
\ba\label{RR-new-4}
\e k L \int_\O 4 \teta^{-\frac 12} ( \teta^{\frac 12} - \eta^{\frac 12}) \nabla_x \teta^{\frac 12} \cdot \nabla_x (\teta^{\frac 12}-\eta^{\frac 12})\,\dx  \leq   \zeta(t) \CalE_2(t)  + \e k L\|\nabla_x (\teta^{\frac 12} - \eta^{\frac 12})(t)\|_{L^2(\O)}^2.
\ea
By using \eqref{pt-G-2}, we have the estimate
\ba\label{RR-new-5}
&\e k L \int_\O   - \teta^{-1} \Delta_x \teta (\eta^{\frac 12}-\teta^{\frac 12})^2\,\dx \\
&\leq \e k L \int_\O    \teta^{-2} | \nabla_x \teta|^2 (\eta^{\frac 12}-\teta^{\frac 12})^2\,\dx - \e k L \int_\O  2 \teta^{-1} \nabla_x \teta \cdot  \nabla (\eta^{\frac 12}-\teta^{\frac 12}) (\eta^{\frac 12}-\teta^{\frac 12})\,\dx\\
& \leq  \|\teta^{-1} \nabla_x \teta (t)\|^2_{L^\infty(\O)} \int_\O  \e k L  (\eta^{\frac 12}-\teta^{\frac 12})^2\,\dx\\
& \quad + 4 \e k L  \|\teta^{-1}(t)\nabla_x \teta(t)\|^2_{L^\infty(\O)} \|(\teta^{\frac 12} - \eta^{\frac 12})(t)\|^2_{L^2(\O)} + \e k L\|\nabla_x (\teta^{\frac 12} - \eta^{\frac 12})(t)\|_{L^2(\O)}^2\\
& \leq    \zeta(t) \CalE_2(t)  + \e k L\|\nabla_x (\teta^{\frac 12} - \eta^{\frac 12})(t)\|_{L^2(\O)}^2.
\ea

We now consider
\ba\label{RR-new-7}
 \int_\O \tvr^{-1} (\tvr - \vr) \nabla_x q(\teta) \cdot (\tvu-\vu)\,\dx = I_1 + I_2 + I_3,
\ea
where, for $\delta$ be chosen as in Lemma \ref{lem-H-G},
\ba\label{RR-new-7-1}
&I_1 := \int_{\delta \tvr\leq \vr\leq \delta^{-1} \tvr}  \tvr^{-1} (\tvr - \vr) \nabla_x q(\teta) \cdot (\tvu-\vu) \,\dx,\\
 &I_2 := \int_{\vr\geq \delta^{-1} \tvr}  \tvr^{-1} (\tvr - \vr) \nabla_x q(\teta) \cdot (\tvu-\vu) \,\dx,\\
 &I_3 := \int_{\vr\leq \delta \tvr}  \tvr^{-1} (\tvr - \vr) \nabla_x q(\teta) \cdot (\tvu-\vu) \,\dx.
\ea

By \eqref{pt-H}, \eqref{est-strong}, the lower bound of $\tvr$ in \eqref{lower-vr-eta} and Sobolev embedding, we have for some $\sigma>0$ small that
\ba\label{RR-new-7-2}
I_1 & \leq  C(\sigma) \| \nabla_x q(\teta) \|_{L^3(\O)}^2  \|(\vr-\tvr)\|_{L^2(\delta \tvr\leq \vr\leq \delta^{-1} \tvr)}^2   + \sigma \| (\tvu-\vu)\|_{L^6(\O)}^2\\
& \leq \zeta (t) \int_{\delta \tvr\leq \vr\leq \delta^{-1} \tvr} H(\vr)- H(\tvr) - H'(\tvr)(\vr-\tvr)\,\dx + \frac{\mu}{16} \| \nabla(\tvu-\vu)\|_{L^2(\O)}^2.
\ea

Similarly, for $I_2$,
\ba\label{RR-new-7-3}
I_2 & \leq  C \int_{\vr\geq \delta^{-1} \tvr} |\nabla_x q(\teta)| \vr |\tvu-\vu|\,\dx \leq  C \int_{\vr\geq \delta^{-1} \tvr}\vr |\nabla_x q(\teta)|^2  \,\dx + C \int_{\O}  \vr |\tvu-\vu|^2 \,\dx \\
& \leq  C |\nabla_x q(\teta)|^2_{L^\infty(\O)} \int_{\vr\geq \delta^{-1} \tvr}\vr^\g   \,\dx + C \int_{\O}  \vr |\tvu-\vu|^2 \,\dx\\
& \leq \zeta(t) \int_{\vr\geq \delta^{-1} \tvr}  H(\vr)- H(\tvr) - H'(\tvr)(\vr-\tvr)   \,\dx + C \int_{\O}  \vr |\tvu-\vu|^2 \,\dx.
\ea

And for $I_3$,
\ba\label{RR-new-7-4}
I_3 & \leq  C \int_{\vr\leq \delta \tvr} | \nabla_x q(\teta)| \, | (\tvu-\vu)| \,\dx \leq C(\sigma) \| \nabla_x q(\teta) \|_{L^3(\O)}^2  \|1\|_{L^2(\vr\leq \delta \tvr)}^2   + \sigma \| (\tvu-\vu)\|_{L^6(\O)}^2\\
& \leq \zeta (t) \int_{\vr\leq \delta \tvr} H(\vr)- H(\tvr) - H'(\tvr)(\vr-\tvr)\,\dx + \frac{\mu}{16} \| \nabla(\tvu-\vu)\|_{L^2(\O)}^2.
\ea

Summing up \eqref{RR-new-7-2}--\eqref{RR-new-7-4}, we obtain
\ba\label{RR-new-7-f}
 \int_\O \tvr^{-1} (\tvr - \vr) \nabla_x q(\teta) \cdot (\tvu-\vu)\,\dx \leq \zeta (t) \CalE_1(t) + \frac{\mu}{8} \| \nabla(\tvu-\vu)\|_{L^2(\O)}^2.
\ea
The similar argument implies
\ba\label{RR-new-8-f}
\int_\O \tvr^{-1} (\vr  - \tvr) \Div_x \tTT \cdot (\tvu-\vu)\,\dx \leq \zeta (t) \CalE_1(t) + \frac{\mu}{8} \| \nabla(\tvu-\vu)\|_{L^2(\O)}^2
\ea
and
\ba\label{RR-new-6-f}
\int_\O (\mu \Delta_x \tvu + \nu \nabla_x \Div_x \tvu) \tvr^{-1} (\vr-\tvr) \cdot (\tvu-\vu) \,\dx \leq \zeta (t) \CalE_1(t) + \frac{\mu}{8} \| \nabla(\tvu-\vu)\|_{L^2(\O)}^2.
\ea
We remark that, unlike $\nabla_x q(\teta)$ or $\Div_x \tTT$, we do not have a control for the $L^2(0,T;L^\infty(\O))$ norm of $\Delta_x \tvu$ in Theorem \ref{thm-strong}. Thus a little more steps need to be taken concerning the estimate \eqref{RR-new-6-f}; precisely, we need to modify the estimate of the following term compared to \eqref{RR-new-7-3}:
\ba\label{RR-new-6-1}
\int_{\vr\geq \delta^{-1} \tvr} (\mu \Delta_x \tvu + \nu \nabla_x \Div_x \tvu) \tvr^{-1} (\vr-\tvr) \cdot (\tvu-\vu) \,\dx.\nn
\ea
Actually, we have
\ba\label{RR-new-6-2}
&\int_{\vr\geq \delta^{-1} \tvr} (\mu \Delta_x \tvu + \nu \nabla_x \Div_x \tvu) \tvr^{-1} (\vr-\tvr) \cdot (\tvu-\vu) \,\dx\\
&\leq C \int_{\vr\geq \delta^{-1} \tvr} \vr |\mu \Delta_x \tvu + \nu \nabla_x \Div_x \tvu|^2\,\dx + C\int_\O \vr |\tvu-\vu|^2 \,\dx\\
& \leq C \|\vr\|_{L^\g(\vr\geq \delta^{-1} \tvr)} \|\nabla_x^2\tvu\|_{L^{\frac{2\g}{\g-1}}}^2+ C\int_\O \vr |\tvu-\vu|^2 \,\dx.
\ea
By the low bound on $\tvr$ in \eqref{lower-vr-eta}, we have
$$
\|\vr\|_{L^\g(\vr\geq \delta^{-1} \tvr)} \leq C \|\vr\|_{L^\g(\vr\geq \delta^{-1} \tvr)}^\g \leq C \int_{\vr\geq \delta^{-1} \tvr}  H(\vr)- H(\tvr) - H'(\tvr)(\vr-\tvr)   \,\dx.
$$
Then, by the estimate on $\nabla_x^2\tvu$ in \eqref{est-strong}, we deduce from \eqref{RR-new-6-2} that:
\ba\label{RR-new-6-3}
&\int_{\vr\geq \delta^{-1} \tvr} (\mu \Delta_x \tvu + \nu \nabla_x \Div_x \tvu) \tvr^{-1} (\vr-\tvr) \cdot (\tvu-\vu) \,\dx\\
&\leq \zeta(t) \int_{\vr\geq \delta^{-1} \tvr}  H(\vr)- H(\tvr) - H'(\tvr)(\vr-\tvr)   \,\dx + C\int_\O \vr |\tvu-\vu|^2 \,\dx \leq \zeta (t) \CalE_1(t).\nn
\ea

\medskip

H\"older's inequality implies
\ba\label{RR-new-9-f}
 \int_\O  (\TT - \tTT) : \nabla_x (\tvu-\vu)\,\dx \leq  C \int_\O  |\TT - \tTT|^2\,\dx + \frac{\mu}{8} \| \nabla_x (\tvu-\vu)\|_{L^2(\O)}^2
 \ea

\medskip

Finally, summarizing the estimates in \eqref{RR-new-1}, \eqref{RR-new-2}, \eqref{RR-new-4}, \eqref{RR-new-5}, \eqref{RR-new-7-f},  \eqref{RR-new-8-f},  \eqref{RR-new-6-f} and \eqref{RR-new-9-f}, we deduce from \eqref{RR-new} that
 \ba\label{RR-new-f}
&\RR (t) \leq \zeta(t) (\CalE_1+\CalE_2)(t) + \frac{\mu}{2}  \| \nabla(\tvu-\vu)\|_{L^2(\O)}^2 + 2 \e k L\|\nabla_x (\teta^{\frac 12} - \eta^{\frac 12})\|_{L^2(\O)}^2 + C \int_\O  |\TT - \tTT|^2\,\dx.
\ea
It is left to deal with $\int_\O  |\TT - \tTT|^2\,\dx$.

\subsection{End of the proof}\label{sec-est-RR-T}

First of all, since the initial data are assumed to be regular enough as in Theorem \ref{thm-strong}, we can employ Propositions \ref{lem-eta-better} and \ref{lem-TT-better} to obtain better estimates for $\eta$ and $\TT$:
\be\label{est-eta-T-higher}
(\eta,\TT)\in L^{\infty}(0,T_*; L^r(\O))\cap L^{2}(0,T_*; W^{1,r}(\O)),\quad \mbox{for any $r\in (1,\infty)$}.
\ee
This allows us to take $\TT$ as a test function in the weak formulation \eqref{weak-form4-f}. We can also take the strong solution $\tTT$ as a test function in \eqref{weak-form4-f}. Together with the equation in $\tTT$, through tedious but rather direct calculations, we obtain
\ba\label{est-T-tT}
& \int_\O \frac{1}{2} |\TT-\tTT|^2\, \dx + \frac{1}{2\l} \int_0^t \int_\O |\TT-\tTT|^2\, \dx \,dt' + \e \int_0^t  \int_\O |\nabla_x(\TT-\tTT)|^2\, \dx\,\dt'\\
&= - \int_0^t \int_\O \big[{\rm Div}_x\big( (\vu-\tvu) \TT \big) + {\rm Div}_x\big( \tvu (\TT-\tTT) \big)    \big] : (\TT-\tTT) \, \dx \,\dt' \\
&+ \int_0^t \int_\O  \big[\nabla_x (\vu-\tvu) \TT  + \nabla_x^{\rm T}\tvu (\TT-\tTT) \big] : (\TT-\tTT) \, \dx \,\dt' + \frac{k}{2\l} \int_0^t \int_\O(\eta-\teta)\,\tr\,(\TT-\tTT)\,\dx\,\dt' .
\nn
\ea

Let $\zeta(t)$ denote an arbitrary function in $L^1([0,T])$. We use \eqref{est-eta-T-higher} to get the estimate
\ba\label{est-T-tT-1}
& -\int_\O {\rm Div}_x\big( (\vu-\tvu) \TT \big)  : (\TT-\tTT) \, \dx  = - \int_\O \big( {\rm Div}_x (\vu-\tvu) \TT + (\vu-\tvu)\cdot \nabla_x \TT \big)  : (\TT-\tTT) \, \dx\\
&\quad \leq \big(\| \Div_x (\vu-\tvu)  \|_{L^2(\O)} \|\TT \|_{L^\infty(\O)} + \|\vu-\tvu\|_{L^6(\O)} \|\nabla_x \TT\|_{L^3(\O)}  \big)\|\TT-\tTT \|_{L^2(\O)}\\
&\quad \leq \zeta(t) \|\TT-\tTT \|_{L^2(\O)}^2 + \frac{\mu}{8} \| \nabla_x (\vu-\tvu)  \|_{L^2(\O)}^2.
\ea
Similarly as \eqref{est-T-tT-1}, we have
\ba\label{est-T-tT-2}
-\int_\O  {\rm Div}_x\big( \tvu (\TT-\tTT) \big) : (\TT-\tTT) \, \dx  = - \frac{1}{2}\int_\O (\Div_x \tvu) |\TT-\tTT|^2 \, \dx \leq \| \Div_x \tvu  \|_{L^\infty(\O)} \|\TT-\tTT \|_{L^2(\O)}^2.\nn
\ea

The other terms can be estimated similarly. At last, we arrive at
\ba\label{est-T-tT-f}
&\int_\O \frac{1}{2} |\TT-\tTT|^2\, \dx + \e \int_0^t  \int_\O |\nabla_x(\TT-\tTT)|^2\, \dx\,\dt' \\
& \quad \leq \zeta(t) \|\TT-\tTT \|_{L^2(\O)} + \frac{k}{2\l} \|\eta-\teta\|_{L^2(\O)}^2 + \frac{\mu}{4} \| \nabla_x (\vu-\tvu)  \|_{L^2(\O)}^2
\ea

\medskip

Denote
\be\label{def-E3}
 \CalE(t) := \CalE_1(t)+ \CalE_2(t) + \int_\O  \frac{1}{2}|\TT-\tTT|^2(t,\cdot)\, \dx.
\ee
Thus, by the estimates \eqref{RR-new-f} and \eqref{est-T-tT-f}, by Proposition \ref{prop-entropy}, we derive for any $t\in (0,T]$:
\ba\label{est-entropy}
&\CalE(t) + \int_0^t \int_\O \frac{\mu}{4} | \nabla_x (\vu-\tvu)  |^2 + 2 \e k L|\nabla_x (\teta^{\frac 12} - \eta^{\frac 12})|^2 +  2 \e \de |\nabla_x (\teta - \eta)|^2 + \e |\nabla_x(\TT-\tTT)|^2\, \dx\,\dt' \\
&\quad \leq     \int_0^t \zeta(t')\CalE (t') \,\dt',\quad \mbox{for some $\zeta(t)\in L^1([0,T])$}.\nn
\ea
Gronwall's inequality gives $\CalE(t) \equiv 0$ for $t\in [0,T]$ for any $T\in (0,T_*)$ which implies the weak-strong uniqueness \eqref{weak=strong}. The proof of Theorem \ref{thm-ws} is completed.

\subsection{Conditional regularity}\label{sec-thm-con-reg}

 At last, we prove Theorem \ref{thm-con-reg} concerning the conditional regularity for weak solutions. This is indeed a consequence of the refined blow-up criterion and the weak-strong uniqueness. By Theorem \ref{thm-strong}, we let $(\tilde\vr, \tilde \vu, \teta,\tTT)$ be the strong solution with $T_*$ the maximal existence time issued from the same initial data as the weak solution $(\vr,\vu,\eta,\TT)$.

\medskip

We firstly show that $T<T_*$.  By contradiction we assume $T \geq T_*$. Then for any $T_1<T_*$, since $\tvr$ and $\teta$ has a positive lower bound over $[0,T_1]\times \O$ (see \eqref{lower-vr-eta}), we can apply Theorem \ref{thm-ws} to derive that the weak solution coincides with the strong one over $[0,T_1]$. This implies
$$\sup_{[0,T_1]\times \O} \tvr = \sup_{[0,T_1]\times \O} \vr \leq    \sup_{[0,T)\times \O} \vr < \infty, $$
where the upper bound is uniform as $T_1\to T_*$. Thus,
$$\sup_{[0,T_*)\times \O} \tvr  \leq    \sup_{[0,T)\times \O} \vr < \infty.$$
This implies, by Theorem \ref{thm-criterion}, $T_* = \infty$ which contradicts $T\geq T_*$.

Now we have $T<T_*$. We can choose $T_1$ such that $T\leq T_1<T_*$. Since $\tvr$ and $\teta$ has a positive lower bound over $[0,T_1]\times \O$ (see \eqref{lower-vr-eta}), we can apply Theorem \ref{thm-ws} to derive that the weak solution is indeed the strong one over $[0,T_1] \supset [0,T]$. The proof of Theorem \ref{thm-con-reg} is then completed.



\begin{thebibliography}{000}



\bibitem{Barrett-Boyaval} J. W. Barrett, S. Boyaval. Existence and approximation of a (regularized) Oldroyd--B model. Math. Models Methods Appl. Sci., 21 (2011) 1783--1837.

\bibitem{Barrett-Lu-Suli} J. W. Barrett, Y. Lu, E. S\"uli.  Existence of large-data finite-energy global weak solutions\\to a
compressible Oldroyd--B model. To appear in Commun. Math. Sci..


\bibitem{Barrett-Suli2} J. W. Barrett, E. S\"uli.  Existence and equilibration of global weak solutions to kinetic models for dilute polymers I: Finitely extensible nonlinear bead-spring chains. Math. Models Methods Appl. Sci., 21 (2011) 1211--1289.

\bibitem{Barrett-Suli4} J. W. Barrett, E. S\"uli. Existence and equilibration of global weak solutions to kinetic models for dilute polymers II: Hookean-type bead-spring chains. Math. Models Methods Appl. Sci., 22, 1150024 (2012), 84 pages.

\bibitem{Barrett-Suli1} J. W. Barrett, E. S\"uli. Existence of global weak solutions to finitely extensible nonlinear bead-spring chain models for dilute polymers with variable density and viscosity. J. Differential Equations, 253 (2012) 3610--3677.


\bibitem{Barrett-Suli} J. W. Barrett, E. S\"uli.   Existence of global weak solutions to compressible isentropic finitely extensible nonlinear bead-spring chain models for dilute polymers. Math. Models Methods Appl. Sci., 26 (2016) 469--568.

\bibitem{BS2016} J. W. Barrett, E. S\"uli.  Existence of global weak solutions to compressible isentropic finitely extensible nonlinear bead-spring chain models for dilute polymers: The two-dimensional case. J. Differential Equations, 261 (2016) 592--626.




\bibitem{bog} M. E. Bogovski{\u{\i}}. Solution of some vector analysis problems connected with operators div and grad. Trudy Sem. S.L. Soboleva, 80 (1980) 5--40. In Russian.





\bibitem{CN01} J.-Y. Chemin, N. Masmoudi. About lifespan of regular solutions of equations related to viscoelastic fluids. SIAM J. Math. Anal., 33 (2001) 84--112.

\bibitem{CK03}J. H. Choe, H. Kim. Strong solutions of the Navier--Stokes equations for isentropic compressible fluids. J Differential
Equations, 190 (2003) 504--523.

\bibitem{CK}
P. Constantin,  M. Kliegl.
Note on global regularity for two-dimensional Oldroyd--B fluids with diffusive stress.
 Arch. Ration. Mech. Anal., 206 (2012) 725--40.






\bibitem{Fang-Zi}
D. Fang, R. Zi.
Strong solutions of 3D compressible Oldroyd--B fluids. Math. Methods Appl. Sci.,  36 (11)  (2013) 1423--1439.







\bibitem{FJN12} E. Feireisl, B. Jin, A. Novotn\'y. Relative entropies, suitable weak solutions, and weak-strong uniqueness
for the compressible Navier-Stokes system. J. Math. Fluid Mech., 14 (2012) 717--730.




\bibitem{FNS14} E. Feireisl, A. Novotn\'y, Y. Sun. A regularity criterion for the weak Solutions
to the Navier--Stokes--Fourier System. Arch. Rational Mech. Anal., 212 (2014) 219--239.

\bibitem{F-G-O02} E. Fern\'andez-Cara, F. Guill\'en,  R.R. Ortega. Mathematical modeling and
analysis of viscoelastic fluids of the Oldroyd kind. In P.G. Ciarlet et al., editor,
Handbook of numerical analysis. Vol. 8: Solution of equations in $\R^n$ (Part 4).
Techniques of scientific computing (Part 4). Numerical methods of fluids (Part
2), pages 543--661. Elsevier, 2002.







\bibitem{Galdi-book} G. P. Galdi. An introduction to the mathematical theory of the Navier-Stokes  equations. Steady-state problems. Second edition. Springer Monographs in Mathematics. Springer, New York,  2011. xiv+1018 pp.


\bibitem{Germain10}P. Germain. Weak-strong uniqueness for the isentropic compressible Navier--Stokes system. J. Math. Fluid Mech., (2010).

  \bibitem{G-S90-1}  C. Guillop\'e, J.C. Saut. Existence results for the flow of viscoelastic fluids
with a differential constitutive law. Nonlinear Analysis, Theory, Methods \& Appl., 15(9) (1990) 849--869.

 \bibitem{G-S90-2}  C. Guillop\'e, J.C. Saut. Global existence and one-dimensional nonlinear
stability of shearing motions of viscoelastic fluids of Oldroyd type. RAIRO Math. Model. Num. Anal., 24(3) (1990) 369--401.

\bibitem{Hoff95} D. Hoff. Global solutions of the Navier--Stokes equations for multidimensional compressible flow with discontinuous
initial data. J Differential Equations, 120 (1995) 215--254.



\bibitem{Hu-Wang3}  X. Hu, D. Wang. Strong solutions to the three-dimensional compressible viscoelastic fluids. J. Differential Equations, 252 (2012)  4027--4067.

\bibitem{Hu-Wu}  X. Hu, G. Wu. Global existence and optimal decay rates for three-dimensional compressible viscoelastic flows. SIAM J. Math. Anal., 45 (2013)  2815--2833.




\bibitem{Lei} Z. Lei. Global existence of classical solutions for some Oldroyd--B model via
 the incompressible limit. Chinese Ann. Math. Ser. B,  27 (5)  (2006) 565--580.

\bibitem{Bris-Lelievre} C. Le Bris, T. Leli\`evre. Micro-macro models for viscoelastic fluids: modelling, mathematics and numerics. Science China Mathematics, 55 (2012) 353--384.

\bibitem{LM00} P.L. Lions and N. Masmoudi. Global solutions for some Oldroyd models of
non-Newtonian flows. Chin. Ann. Math., Ser. B, 21(2) (2000) 131--146.







\bibitem{N-book} A. Novotn\'y, I. Stra\v{s}kraba.
Introduction to the mathematical theory of compressible flow.
Oxford Lecture Series in Mathematics and its Applications, 27. Oxford University Press, Oxford,  2004. xx+506 pp.





\bibitem{Qian-Zhang}  J.  Qian,  Z.  Zhang. Global well-posedness for compressible viscoelastic fluids near equilibrium.
Arch. Ration. Mech. Anal., 198 (2010)  835--868.

\bibitem{Renardy90} M. Renardy.  Local existence of solutions of the Dirichlet initial-boundary
value problem for incompressible hypoelastic materials. SIAM J. Math. Anal., 21(6) (1990) 1369--1385.



\bibitem{Solonnikov80} V. A. Solonnikov. The solvability of the initial-boundary value problem for the equations of motion of a viscous compressible fluid. J. Sov. Math.,  14 (1980) 1120--1133.

\bibitem{SZ11} Y. Sun, Z. Zhang. A blow-up criterion of strong solution for the 2D compressible Navier--Stokes equations. Sci. China Math., 54 (2011) 106--116.

\bibitem{SWZ11} Y. Sun, C. Wang, Z. Zhang. A Beale--Kato--Majda blow-up criterion for the 3-D compressible
Navier--Stokes equations. J. Math. Pures Appl., 95 (2011) 36--47.


\bibitem{Valli83} A. Valli. Periodic and stationary solutions for compressible Navier--Stokes equations via a statiblity
method. Ann. Sc. Norm. Super. Pisa, Cl. Sci., 10 (4) (1983) 607--647.





\end{thebibliography}
\end{document}